\definecolor{rouge}{rgb}{0.85,0.1,.4}
\definecolor{bleu}{rgb}{0.1,0.2,0.9}
\definecolor{violet}{rgb}{0.7,0,0.8}
\newcommand{\cmu}{{\check\mu}}
\newcommand{\nc}{\newcommand}
\nc{\We}[2]{\mathbb{V}^{#1}_{#2}}
\nc{\Si}[2]{\mathbb{L}^{#1}_{#2}}
\nc{\mi}{\varphi}
\nc{\ppart}{(\!(t)\!)}
\nc{\al}{\alpha}
\nc{\ka}{\kappa}
\nc{\LP}{{}^L\neg P}
\nc{\n}{{\mathfrak n}}
\nc{\ghat}{\wh{\g}}
\def\neg{\negthinspace}
\renewcommand{\L}{\mathbb{L}}
\newcommand{\wh}{\widehat}
\newcommand{\bra}{{\langle}}
\newcommand{\ket}{{\rangle}}
\newcommand{\Lam}{\Lambda}
\newcommand{\on}{\operatorname}
\newcommand{\+}{\mathop{\oplus}}
\renewcommand{\*}{{\otimes}}
\newcommand{\mc}{\mathcal}
\newcommand{\mf}{\mathfrak}
\newcommand{\g}{\mf{g}}
\newcommand{\h}{\mf{h}}
\newcommand{\affg}{\widehat{\mf{g}}}
\newcommand{\isomap}{{\;\stackrel{_\sim}{\to}\;}}
\newcommand{\Z}{\mathbb{Z}}
\newcommand{\C}{\mathbb{C}}
\newcommand{\Q}{\mathbb{Q}}
\newcommand{\W}{\mathscr{W}}
\newcommand{\ra}{\rightarrow}
\newcommand{\lam}{\lambda}
\newcommand{\voa}{vertex operator algebra}
\def\leq{\leqslant}
\def\geq{\geqslant}
\DeclareMathOperator{\End}{End}
\DeclareMathOperator{\gr}{gr}
\DeclareMathOperator{\ad}{ad}
\DeclareMathOperator{\Hom}{Hom}
\theoremstyle{theorem}
\newtheorem{Th}{Theorem}[section]
\newtheorem{MainTh}{Theorem}
\newtheorem{Pro}[Th]{Proposition}
\newtheorem{Lem}[Th]{Lemma}
\newtheorem{Co}[Th]{Corollary}
\theoremstyle{remark}
\newtheorem{Rem}[Th]{Remark}
\newtheorem{Conj}{Conjecture}
\newtheorem{Ex}[Th]{Example}
\title[Urod algebras and Translation of W-algebras]{Urod algebras and Translation  of  W-algebras}
\author{Tomoyuki Arakawa}
\address{Research Institute for Mathematical Sciences, Kyoto University,
Kyoto 606-8502 JAPAN}
\email{arakawa@kurims.kyoto-u.ac.jp}
\author{Thomas Creutzig}
\address{University of Alberta
Department of Mathematical and Statistical Sciences
Edmonton, AB T6G 2G1, Canada}
\email{creutzig@ualberta.ca}
\author{Boris Feigin}
\address{National Research University Higher School of Economics, 101000, Myasnitskaya ul.~20, Moscow, Russia, and Landau Institute for Theoretical Physics, 142432, pr.\ Akademika Semenova 1a, Chernogolovka, Russia}
\email{bfeigin@gmail.com}
\begin{document}
\maketitle

\begin{abstract}
In this work, we  introduce Urod algebras associated to simply-laced Lie algebras as well as the concept of translation of $W$-algebras. 

Both results are achieved by 
showing that the quantum Hamiltonian reduction
commutes with tensoring with integrable representations, 
that is,
for $V$ and $L$ an affine vertex algebra and an integrable affine vertex algebra associated with $\g$,
 we have the vertex algebra isomorphism
$H_{DS,f}^0(V\otimes L)\cong H_{DS,f}^0(V)\otimes L$
where in the left-hand-side the Drinfeld-Sokolov reduction is taken with respect to the diagonal action of $\affg$ on $V\* L$.

The proof  is based on some new construction
of automorphisms of vertex algebras, which may be of independent interest. As corollaries we get fusion categories of modules of many exceptional $W$-algebras and we can construct various corner vertex algebras. 

A major motivation for this work is that Urod algebras of type $A$ provide a representation theoretic interpretation
 of the celebrated Nakajima-Yoshioka blowup equations
 for the moduli space of framed torsion free sheaves on $\mathbb{CP}^2$  of  an arbitrary rank.

%
\end{abstract}

\section{Introduction}
In \cite{BerFeiLit16}
Bershtein, Litvinov and the third named author introduced the
{\em Urod algebra},
which gives a representation theoretic interpretation
 of the celebrated {\em Nakajima-Yoshioka blowup equations} \cite{Nakajima2005} for the moduli space of framed torsion free sheaves on $\mathbb{CP}^2$  of  rank two,
 via the Alday-Gaiotto-Tachikawa (AGT) correspondence \cite{AGT}.
One of the aims of the present paper is to introduce
the {\em higher rank Urod algebras},
which generalizes the result of \cite{BerFeiLit16} to the the case of sheaves at arbitrary rank.

In fact,
it turned out in recent works (see e.g. \cite{FGuk,CG}) that
Urod algebras  
appear not only in the  AGT correspondence but also  in various  theories of vertex algebras in connection with
higher dimensional quantum field theories. 
This work provides the first systematic study of Urod algebras appearing in  various contexts.

Another aim of this paper is to introduce the {\em translation for (affine) $W$-algebras}.
Namely,  we show that
for any integrable highest  representation $L$ of level $\ell\in \Z_{>0}$
gives rise to an exact functor
\begin{align}
T_L: \W^k(\g,f)\on{-Mod}\ra \W^{k+\ell}(\g,f)\on{-Mod},\quad M\mapsto M\* L,
\label{eq:translation-by-L}
\end{align}
where $\W^k(\g,f)$ is the $W$-algebra associated with $\g$ and its nilpotent element $f$ at level $k\in \C$
(\cite{FF90,KacRoaWak03}).

We  establish these results
by showing that 
the quantized Drinfeld-Sokolov reduction commutes with tensoring with integrable representations.

\smallskip

Let us describe our results in more details.

\subsection{Main Theorem}
Let
$\g$ be a simple Lie algebra,
$\affg=\g((t))\+\C K$ be the affine Kac-Moody Lie algebra
associated with $\g$ defined by the commutation relation
$$[xf,yg]=[x,y]fg+(x,y)\on{Res}_{t=0}(gdf)K,$$
$[K,\affg]=0$,
where $(~|~)$ is the normalized inner product of $\g$ (it is $1/2h^{\vee}$ times the Killing form of $\g$).
For $k\in \C$,
let
$V^k(\g)=U(\affg)\*_{U(\g[[t]]\* \C K)}\C$,
where $\C$ is regarded as a one-dimensional representation of 
$\g[[t]]\* \C K$ on which $\g[[t]]$ acts trivially and $K$ acts 
via multiplication by $k$.
$V^k(\g)$ is naturally a vertex algebra, and is called the {\em universal affine vertex algebra}
associated with $\g$ at level $k$.

Any (graded) quotient $V$ of $V^k(\g)$ inherits the vertex algebra structure from $V^k(\g)$.
Let $\L_k(\g)$
be the  unique simple (graded) quotient 
  of $V^k(\g)$.
The vertex algebra $\L_k(\g)$ is 
integrable as an $\affg$-module if and only if $k\in \Z_{\geq 0}$.

For a nilpotent element $f$ of $\g$,
let
 $H_{DS,f}^\bullet(M)$ be the BRST cohomology of the quantized Drinfeld-Sokolov reduction
 associated with $(\g,f)$
with coefficients in a $\affg$-module $M$ (\cite{FF90,KacRoaWak03}).
The $W$-algebra associated with $(\g,f)$ at level $k$ is by definition
the vertex algebra
\begin{align*}
\W^k(\g,f)=H_{DS,f}^0(V^k(\g)).
\end{align*}
For any smooth $\affg$-module $M$
of level $k$,
$H_{DS,f}^i(M)$, $i\in \Z$,
is a module over $\W^k(\g,f)$.
More generally,
for any vertex algebra $V$ equipped with a vertex algebra homomorphism
$V^k(\g)\ra V$ and a $V$-module $M$,
$H_{DS,f}^i(M)$, $i\in \Z$,
is a module over the vertex algebra $H_{DS,f}^0(V)$.

\begin{MainTh}\label{MainTh:iso}
Let $V$ be a quotient of the universal affine vertex algebra $V^k(\g)$ and $\ell\in \Z_{\geq 0}$.
We have a vertex algebra isomorphism
\begin{align}
H_{DS,f}^0(V\otimes L_{\ell}(\g))\cong H_{DS,f}^0(V)\otimes  L_{\ell}(\g)
\label{eq:trans-iso}
\end{align}
where in the left-hand-side the Drinfeld-Sokolov reduction is taken with respect to the diagonal action of $\affg$ on $V\*  L_{\ell}(\g)$.
More
generally,
let $V$ be a vertex algebra 
equipped with a vertex algebra homomorphism
$V^k(\g)\rightarrow V$.
Then we have an isomorphism
\begin{align}
H_{DS,f}^\bullet(V\otimes L_{\ell}(\g))\cong H_{DS,f}^\bullet(V)\otimes  L_{\ell}(\g),
\label{eq:trans-iso}
\end{align}
of graded vertex algebras,
and
 for any $V$-module $M$,
$ \L_{\ell}(\g)$-module $N$,
there is an isomorphism
\begin{align*}
H_{DS,f}^\bullet(M\*N)\cong H_{DS,f}^\bullet(M)\* N.
\end{align*}
as modules over 
$H_{DS,f}^\bullet(V\otimes L)\cong H_{DS,f}^\bullet(V)\otimes L$.
\end{MainTh}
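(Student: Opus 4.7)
The plan is to construct a vertex algebra automorphism $\Phi$ of $V \* L_\ell(\g)$ that intertwines the diagonal embedding of $V^k(\g)$ with the first-factor embedding: writing the diagonal currents as $x^{\on{diag}}(z) = x(z)\* 1 + 1 \* x(z)$ for $x \in \g$, we seek $\Phi$ satisfying $\Phi\, x^{\on{diag}}(z)\, \Phi^{-1} = x(z) \* 1$. Once such a $\Phi$ is in hand, tensoring with the identity on the Drinfeld-Sokolov ghost vertex algebra $F^{ch}$ converts the BRST differential for the diagonal reduction into the BRST differential that involves only the $V$-currents; in that complex $L_\ell(\g)$ is uncoupled, sits as a pure tensor factor, and so passes through the cohomology, giving $H_{DS,f}^\bullet(V \* L_\ell(\g)) \cong H_{DS,f}^\bullet(V) \* L_\ell(\g)$ as graded vertex algebras.

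The heart of the argument is the construction of $\Phi$, which crucially uses the integrability of $L_\ell(\g)$. For any nilpotent $x \in \g$ and any $n \in \Z$, the operator $1 \* x_{(n)}$ is locally nilpotent on $V \* L_\ell(\g)$, so exponentials $\exp(1 \* x_{(n)})$ are well-defined operators on the whole space. I would assemble $\Phi$ as a suitable (possibly infinite) product of such exponentials, patterned on a formal gauge transformation in the affine Kac-Moody group that gauges away the $L$-factor of the diagonal current. This is the new construction of vertex algebra automorphisms announced in the introduction; its existence is not formal, since a priori there is no reason for such a gauge transformation to preserve the vertex algebra structure, and it is the integrability of $L_\ell(\g)$ that supplies both the local nilpotency used in each factor and the uniform control needed to show that the product defines an automorphism respecting the state-field correspondence.

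The module statement follows by applying the same $\Phi$ to $M \* N$: it intertwines the two $\affg$-actions on this space (diagonal versus first-factor), so via $\Phi \* \on{id}_{F^{ch}}$ the diagonal BRST complex for $M \* N$ becomes isomorphic, as a complex of modules over $V \* L_\ell(\g)$ (with the module action transported by $\Phi$), to the trivially-coupled complex whose cohomology is $H_{DS,f}^\bullet(M) \* N$. Taking cohomology and translating the module structure through the vertex algebra isomorphism above yields the required isomorphism of modules over $H_{DS,f}^\bullet(V \* L_\ell(\g)) \cong H_{DS,f}^\bullet(V) \* L_\ell(\g)$.

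The main obstacle is the well-definedness of $\Phi$. One must verify, firstly, convergence of the infinite product on each vector of $V \* L_\ell(\g)$, which typically means showing that on any fixed vector only finitely many factors act nontrivially or that the tails converge in an appropriate completion; secondly, the twisting identity for \emph{all} currents $x_{(n)}$ together with compatibility with operator product expansions, rather than just for a generating set; and thirdly, that the construction descends from the universal vertex algebra $V^k(\g)$ to an arbitrary quotient $V$ and extends to modules without disturbing the $\W^k(\g,f)$-actions. All three points are expected to reduce to integrability-driven estimates, but setting them up rigorously, and isolating the precise algebraic identity that encodes the gauge transformation at the level of vertex operators, is the substantive part of the proof.
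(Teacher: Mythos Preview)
Your proposed $\Phi$ cannot exist as stated: the diagonal currents generate an affine subalgebra of level $k+\ell$, while $x(z)\otimes 1$ has level $k$, and any vertex algebra automorphism preserves the coefficient of the double pole in the OPE. Hence for $\ell>0$ there is no automorphism of $V\otimes L_\ell(\g)$ with $\Phi\,x^{\on{diag}}(z)\,\Phi^{-1}=x(z)\otimes 1$ for all $x\in\g$. Even if you retreat to the nilpotent currents $x_\alpha$, $\alpha\in\Delta_{>0}$ (which is all the BRST differential actually sees, and where the level obstruction vanishes), operators of the form $\exp(1\otimes x_{(n)})$ commute with the first tensor factor, so conjugation by any product of them fixes $x_\alpha\otimes 1$ and merely moves $1\otimes x_\alpha$ around inside $1\otimes L_\ell(\g)$; it can never make the second-factor piece of $x_\alpha^{\on{diag}}$ disappear.

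The paper's construction is quite different in both the domain and the mechanism. The automorphism $\varphi$ lives on the \emph{full} BRST complex $C=V^k(\g)\otimes L_\ell(\g)\otimes F_\chi\otimes\bigwedge^{\infty/2+\bullet}(\g_{>0})$ and genuinely mixes the ghost fields into the affine currents (see Example~\ref{Ex:sl2}); it does not restrict to $V\otimes L_\ell(\g)$. It is not a product of exponentials but is produced by the abstract device of Section~\ref{section:auto}: one takes $A=\pi_2(x_0)$, whose zero mode grades $C$ semisimply, and uses a spectral sequence on the Kac--Wakimoto subcomplex $C_{\leq 0}$ to show that $A$ lifts to a $(Q_t)_{(0)}$-cocycle $\hat A\equiv A\pmod{C[>0]}$ with locally finite zero mode; then $\varphi$ sends each $A_{(0)}$-eigenvector $v$ to the unique $\hat A_{(0)}$-eigenvector $\tilde v$ of the same eigenvalue and leading term. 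Integrability of $L_\ell(\g)$ enters not through nilpotency of root vectors but through finite-dimensionality of the conformal weight spaces of $C_{\leq 0}$ under a twisted Hamiltonian, which is what forces $\hat A_{(0)}$ to be locally finite. Crucially, $\varphi$ intertwines only the differentials $(Q_{t=0})_{(0)}$ and $(Q_{t=1})_{(0)}$ (Proposition~\ref{Pro:Qnew=Qt}), not the diagonal and first-factor affine embeddings themselves.
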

In the case that
$\g=\mf{sl}_2$ 
and $L=\L_1(\mf{sl}_2)$,
the isomorphism \eqref{eq:trans-iso} 
was established  in \cite{BerFeiLit16}.
Our argument only requires certain properties of integrable representations and especially also works for superalgebras, see section \ref{sec:super}.

We note that
the existence of the isomorphism \eqref{eq:trans-iso}
as vector spaces
is not difficult to see. 
However,
it is not a priori clear at all
why there should exist an isomorphism of vertex algebras.
We also note that
\eqref{eq:trans-iso}  
is not compatible with 
the standard conformal gradings of both sides.
To remedy this,
we need to change the conformal vector
of $\L_{\ell}(\g)$ on the right-hand-side
to a new conformal vector $\omega_{Urod}$,
which we call the {\em Urod conformal vector} (see Section \ref{section:Urod conformal vector}).

The proof  of Theorem \ref{MainTh:iso}  is based on some new construction
of automorphisms of vertex algebras, which may be of independent interest,
see  Section \ref{section:auto}
for the details.

\subsection{Translation for $W$-algebras}
By applying Theorem \ref{MainTh:iso}
to $V=V^k(\g)$,
$k\in \C$,
we obtain the vertex algebra isomorpshim
$$H_{DS,f}^0(V^k(\g)\* L_{\ell}(\g))\cong \W^k(\g,f)\* \L_{\ell}(\g).$$
Consequently, 
 the natural vertex algebra homomorphism
$V^{k+\ell}(\g)\ra V^k(\g)\* \L_{\ell}(\g)$
induces a vertex algebra homomorpshim
\begin{align*}
\W^{k+\ell}(\g,f)\ra H_{DS}^0(V^k(\g)\* \L_{\ell}(\g))\overset{\sim}{\longrightarrow}  \W^k(\g,f)\* \L_{\ell}(\g)
\end{align*}
Therefore,
for any $\W^k(\g,f)$-module $M$ and
 any integrable representation $L$ of 
$\affg$ of level $\ell$,
$M\*L$ is has the structure of an $\W^{k+\ell}(\g,f)$-module.
As a consequence,
we obtain
{\em the translation by $L$},
that is, the exact functor
\eqref{eq:translation-by-L}
 as we wished.

Recall that the Zhu algebra
of $\W^k(\g,f)$ is isomorphic to
the finite $W$-algebra \cite{Pre02} 
$U(\g,f)$ 
associated with $(\g,f)$ (\cite{Ara07,De-Kac06}).
Also,
the
Zhu algebra of $\L_{\ell}(\g)$ is isomorphic to
the quotient $U_{\ell}(\g)$  of 
$U(\g)$ by the two-sided ideal generated by
$e_{\theta}^{\ell+1}$ (\cite{FreZhu92}),
and so is that of
$\L_{\ell}(\g)$ with the Urod conformal structure (\cite{A2012Dec}).
Therefore, by taking the Zhu algebras
we obtain from 
\eqref{eq:translation-by-L}
an algebra homomorphism
\begin{align}
U(\g,f)\ra U(\g,f)\* U_{\ell}(\g).
\label{eq:Zhu-algebra}
\end{align}
Since any finite-dimensional $\g$-module
is an $U_{\ell}(\g)$-module for a sufficiently large $\ell$,
\eqref{eq:Zhu-algebra}
gives 
$M\* E$
 a structure of 
$U(\g,f)$-module
for any $U(\g,f)$-module $M$
and a finite-dimensional $\g$-module $E$.
We expect  that this  $U(\g,f)$-module structure of $M\* E$ does not depend on the choice of  a sufficiently large $\ell$,
and coincides with the one obtained by Goodwin \cite{Goo11}.

\subsection{Higher rank Urod algebras}
We denote also by $\W^k(\g)$
the $W$-algebra $\W^k(\g,f_{prin})$ associated with a principal nilpotent element $f_{prin}$
of $\g$.
Let $\g$ be simply-laced
and suppose that $k+h^{\vee}-1\not\in \Q_{\leq 0}$,
where $h^{\vee}$ is the dual Coxeter number of $\g$.
By the coset construction \cite{ACL19}
of the principal $W$-algebra
$\W^k(\g)$,
we have a conformal vertex algebra embedding
\begin{align}
V^k(\g)\* \W^{\ell}(\g)\hookrightarrow  V^{k-1}(\g)\* \L_1(\g),
\label{eq:GKO-embedding}
\end{align}
where $\ell$ is the number defined by the formula
\begin{align}
\frac{1}{k+h^{\vee}}+\frac{1}{\ell+h^{\vee}}=1.
\label{eq:def-of-ell}
\end{align}
By taking the Drinfeld-Sokolov reduction with respect to the level $k$ action of $\affg$,
\eqref{eq:GKO-embedding} gives rise to the full vertex algebra embedding
\begin{align}
\W^k(\g,f)\* \W^{\ell}(\g)\hookrightarrow  H_{DS,f}(V^{k-1}(\g)\* \L_1(\g))\cong \W^{k-1}(\g,f)\* 
\mc{U}(\g),
\label{eq:Urod-embedding}
\end{align}
where the last isomorphism follows from Theorem \ref{MainTh:iso}
and
$\mc{U}(\g)=\mc{U}(\g,f)$ is the vertex algebra
  $\L_1(\g)$  equipped with the Urod conformal vector $\omega_{Urod}$. 
We call the vertex operator algebra 
$\mc{U}(\g)$
 the  {\em Urod algebra}.
 In the case that $\g=\mf{sl}_2$
 and $f=f_{prin}$,
$\mc{U}(\g)$ is exactly the Urod algebra 
 introduced in
  \cite{BerFeiLit16}.

 Let $L$ be a level one integrable representation of $\affg$,
 which is naturally a  module over the Urod algebra $\mc{U}(\g)$.
By \eqref{eq:Urod-embedding},
for any $\W^k(\g,f)$-module $M$,
the tensor product 
$M\*L$ has the structure of a
$\W^k(\g,f)\* \W^{\ell}(\g)$-module.
We are able to describe
the decomposition of 
$M\*L$ as $\W^k(\g,f)\* \W^{\ell}(\g)$-modules
for various $M$ and $L$
(Theorems \ref{Th:generic-decom}, \ref{Th:dec:generic-principal},
\ref{Th:dec-of-Verma},
\ref{Th:dec-admi-pri}
and Corollaries \ref{Th:dec-of-Verma}, \ref{Co:Urod-decomposition}).

In particular,
Corollary  \ref{Th:dec-of-Verma}
states that
when $M$ is a generic Verma module
of $\W^{k-1}(\g)$,
then $M\* L$  decomposes into a  direct sum of tensor products 
of Verma modules of $\W^{k}(\g)$ and $\W^{\ell}(\g)$.
In the case that $\g=\mf{sl}_2$
with an appropriate choice of $L$,
this provides the decomposition that was
%
used in 
\cite{BerFeiLit16} to give a
representation theoretic interpretation 
of the Nakajima-Yoshioka blowup equations
for
the 
moduli space of rank two framed torsion free  sheaves on $\mathbb{CP}^2$.
In a forthcoming paper we show 
how the decomposition 
for $\g=\mf{sl}_n$ stated in Corollary  \ref{Th:dec-of-Verma}
can be used to give a
representation theoretic interpretation 
of Nakajima-Yoshioka blowup equations
for the moduli space of  framed rank $n$ sheaves 
on $\mathbb{CP}^2$
via the AGT conjecture established by Schiffmann and Vasserot \cite{Schiffmann2013}.

\subsection{Higher rank Urod algebras and $\on{VOA}[M_4]$} The Urod algebra is proposed to be important for general smooth $4$-manifolds. 
This appeared in the recent  work \cite{FGuk}
of Gukov and the third named author.
For a compact simply-laced Lie group $G$,
one can conjecturally \cite{FGuk} associate a vertex operator algebra $\on{VOA}[M] =\on{VOA}[M, G]$
 to every smooth $4$-manifold $M$
 and an cateogry of modules for  $\on{VOA}[M]$
 to every boundary component of $M$. The vertex algebra $\on{VOA}[M]$ should then act on the cohomology of the moduli space of $G$-instantons on $M$. 
 Moreover, the invariant $\on{VOA}[M]$ 
 should have the following property: 
Glueing $4$-manifolds along a common boundary  amounts to extending the tensor product of the two associated vertex algebras along the categories of modules,
see  \cite{CKM2} for 
the theory of these  vertex algebra extensions.

When $G=SU(2)$
we have \cite{FGuk} that
\begin{align*}
\on{VOA}[M_4\# \overline{\mathbb{C} \mathbb{P}^2}]=\mc{U}(\mf{sl}_2)\* \on{VOA}[M_4],
\end{align*}
where $M_4\# \overline{\mathbb{CP}^2}$ is  the connected sum of $M_4$ and $\overline{\mathbb{C} \mathbb{P}^2}$.
One expects the same type of formula for any simply laced $G$ with $\mc{U}(\mf{sl}_2)$ replaced by the corresponding higher rank Urod algebra. 
%

  \subsection{Higher rank Urod algebras and Vertex algebras for S-duality}
  The present work was originally motivated by
  a conjecture that appeared in the context of vertex algebras for $S$-duality \cite{CG} .

  The problem lives inside four-dimensional supersymmetric GL-twisted gauge theories and vertex algebras appear on the intersection of three-dimensional topological boundary conditions. 
  Such vertex algebras are typically constructed out of $W$-algebras and affine vertex algebras associated to the Lie algebra $\g$ of the gauge group $G$, and the coupling constant $\Psi$ relates to the level shifted by the dual Coxeter number $h^\vee$. Different boundary conditions can be concanated to yield other boundary conditions and corresponding vertex algebras are related via vertex algebra extensions. Most of \cite{CG} is dealing with simply-laced $\g$ and 
the discussion at the bottom of page 22 of \cite{CG} is concerned with the concanation of boundary conditions called $B_{1, 0}, B_{-1, 1}$ and $B_{0, 1}$.
The main expectation is that the resulting corner vertex algebra coincides with the corner vertex algebra between the boundary conditions $B_{1, 0}$ and $B_{0, 1}$ dressed by extra decoupled degrees of freedom corresponding to $\L_1(\g)$. In vertex algebra language this expectation is precisely the statement of Theorem \ref{Th:generic-decom}
with $\mu=0=\nu$.

There are further conjectures around vertex algebras and $S$-duality. These are mainly the construction of junction or corner vertex algebras which are typically large extensions of products of two vertex algebras associated to $\g$. 
Theorem \ref{Th:extension},
which  gives a lattice type construction of vertex operator algebras
of CFT type using $W$-algebras in place of Heisenberg algebras,
proves them  in a series of cases.

%
%
%
%
%

\subsection{Rigidity  of vertex tensor categories}
One of the most difficult problems of the subject of vertex algebras is the understanding of tensor categories of modules of a given vertex algebra. 
The theory  of tensor categories of modules of  vertex algebras
    has been developed in the series of papers \cite{HuaLep94, HuaLep95}.
 In particular,    
Yi-Zhi Huang has shown the existence of vertex tensor categories for lisse vertex algebras 
 without the rationality assumption \cite{Huang09}.

The most challenging technical problem 
for vertex tensor categories
is  to prove the rigidity of objects.
This is crucial as rigidity gives the categories substantial structure and many useful theorems only hold for rigid tensor categories. 
While
this problem was settled by
 Huang for rational, lisse vertex algebras \cite{Hua08rigidity},
 it is wide open for non-rational lisse vertex algebras.
 
 In fact,  it is expected that the 
tensor categories of modules 
should exist for much more general vertex algebras.
A strong evidence was given in \cite{CreHuaYan18}
that showed the category of ordinary modules over an admissible affine vertex algebra
associated with a simply laced Lie algebra
has the structure of a vertex tensor category.

We conjecture that the category of ordinary modules
over a quasi-lisse vertex algebra  \cite{AraKaw18} has the structure of a vertex tensor category (Conjecture \ref{Conj:verte-tensor}).
Note that an admissible affine vertex algebra is quasi-lisse.

Assuming this conjecture,
and using an idea of \cite{Cre19}, we use
the decomposition stated in Theorem \ref{Th:generic-decom}
to prove 
that certain categories of quasi-lisse $W$-algebras
   at admissible level are fusion (Theorem \ref{Th::tensor-cateogry}).
Since the conjecture is valid for lisse $W$-algebras,
this gives a strong evidence for the rationality conjecture   \cite{KacWak08,Ara09b}  of lisse $W$-algebras
at admissible levels,
which has been settled 
only in some special  cases  \cite{A2012Dec,AEkeren19}.

It seems that the translation functor is a good tool to prove rationality of vertex operator algebras in suitable cases. We will explain in forthcoming work how to employ the translation functor in order to get new rational $\W$-algebras at non-admissible levels associated to Deligne's exceptional series \cite{ACK}.

\vspace{2mm}

\noindent {\bf Acknowledgements} 
T. A. is partially supported by by  by JSPS KAKENHI Grant Numbers 17H01086, 17K18724,
19K21828.
T.C. appreciates discussions with 
Sergei Gukov and Emanuel Diaconescu. T.C. is supported by NSERC $\#$RES0048511.


\section{A construction of automorphisms of vertex algebras}\label{section:auto}
Let $V$ be a vertex algebra.
As usual,
we set $a_{(n)}=\on{Res}_{z=0}z^n a(z)$
for $a\in V$,
where $a(z)$ is the quantum field corresponding to $a$.
We have
\begin{align}
[a_{(m)},b_{(n)}]=\sum_{j\geq 0}\begin{pmatrix}
m\\ j\end{pmatrix}(a_{(j)}b)_{(m+n-j)}
\label{eq:commutator-formula}
\end{align}
for $a,b\in V$, $m,n\in \Z$.
In particular,
$[a_{(0)},b_{(n)}]=(a_{(0)}b)_{(n)}$.

Let $A$ be an element of $V$ such that 
its zero mode $A_{(0)}$ acts semisimply on $V$ 
so that $V=\bigoplus_{\lam\in \C 
}V[\lam]$,
where 
$V[\lam]=\{v\in V\mid A_{(0)}v=\lam v\}$.
We assume that $A\in V[0]$.
Set $V[\geq \lam]=\bigoplus\limits_{\mu\in  \lam
+\mathbb{R}_{\geq 0}} V[\mu]\supset
V[> \lam]=\bigoplus\limits_{\mu> \lam} V[\mu]$.

Suppose that there exists another element
$\hat{A}\in V[\geq 0]$
such that $\hat{A}\equiv A \pmod{V[>0]}$
and $\hat{A}_{(0)}$ acts locally finitely on $V$.
Then,
$\hat{A}_{(0)}$ acts semisimply on $V$,
and
for $v\in V[\lam]$,
 there exist a unique
eigenvector
$\tilde{v}$ of $\hat{A}_{(0)}$
eigenvalue $\lam$ such that 
$\tilde{v}\equiv v\pmod{V[>\lam]}$.
By extending this correspondence linearly,
we obtain
a linear map
\begin{align}
V\ra V,\quad v\mapsto \tilde{v}.
\label{eq:automorphism}
\end{align}
\begin{Lem}\label{Lem:auto}
The map \eqref{eq:automorphism}
is an automorphism of $V$.
\end{Lem}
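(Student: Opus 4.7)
The key structural point is that both $A_{(0)}$ and $\hat A_{(0)}$ are derivations of every $n$-product on $V$ (the commutator formula \eqref{eq:commutator-formula} gives $[a_{(0)}, b_{(n)}] = (a_{(0)}b)_{(n)}$ for any $a, b \in V$), so their eigenspace decompositions are both vertex-algebra gradings of $V$. The map $v \mapsto \tilde v$ is simply the isomorphism identifying these two gradings via the leading-term condition, and the whole proof amounts to making this precise.

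First I would decompose $\hat A - A =: B \in V[>0]$ into its $A$-weight components $B = \sum_{\mu > 0} B_\mu$ and observe, via \eqref{eq:commutator-formula} applied to $a = A$, that $\Delta := \hat A_{(0)} - A_{(0)} = B_{(0)}$ strictly raises the $A$-weight, i.e.\ $\Delta(V[\lambda]) \subset V[>\lambda]$ for every $\lambda$. Hence for $v \in V[\lambda]$ the equation $\hat A_{(0)}(v + v') = \lambda(v + v')$ with $v' \in V[>\lambda]$ is equivalent to $(\lambda - A_{(0)}) v' = \Delta v + \Delta v'$. Since $\lambda - A_{(0)}$ is invertible on $V[>\lambda]$, writing $R$ for its inverse gives the formal Neumann series $v' = \sum_{n \geq 0} (R\Delta)^{n+1} v$. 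Each term strictly raises the $A$-weight; since $\hat A_{(0)}$ is locally finite, $v$ generates a finite-dimensional $\hat A_{(0)}$-invariant subspace $W$, and the $A$-weights appearing in $W$ are bounded, so the series terminates inside $W$. This proves both existence and (from reading off the $V[\lambda]$-component) uniqueness of $\tilde v$, and hence injectivity of $v \mapsto \tilde v$. Surjectivity onto the $\hat A_{(0)}$-eigenspace of eigenvalue $\lambda$ follows from the mirror-image observation that any such eigenvector has its lowest-$A$-weight component precisely in $V[\lambda]$ (inspect the lowest $A$-weight piece of $\hat A_{(0)} w = \lambda w$).

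Having constructed $v \mapsto \tilde v$ as a linear bijection, I would verify the vertex-algebra axioms. The vacuum lies in $V[0]$ and is killed by every zero mode, so $\widetilde{\mathbf 1} = \mathbf 1$; translation $T$ commutes with both $A_{(0)}$ and $\hat A_{(0)}$, so it preserves weights and eigenvectors and satisfies $\widetilde{Tv} = T\tilde v$. For the $n$-products, let $a \in V[\lambda]$ and $b \in V[\mu]$, and write $\tilde a = a + a_+$, $\tilde b = b + b_+$ with $a_+ \in V[>\lambda]$, $b_+ \in V[>\mu]$. Since $a_{(n)}$ has $A$-weight equal to that of $a$ (from $[A_{(0)}, a_{(n)}] = (A_{(0)}a)_{(n)}$), expanding
\[
\tilde a_{(n)} \tilde b = a_{(n)}b + a_{(n)}b_+ + (a_+)_{(n)}b + (a_+)_{(n)}b_+
\]
shows the last three terms all lie in $V[>\lambda + \mu]$, while the left-hand side is an $\hat A_{(0)}$-eigenvector of eigenvalue $\lambda + \mu$ because $\hat A_{(0)}$ is a derivation. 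The uniqueness characterization of $\tilde{\cdot}$ on $V[\lambda+\mu]$ then forces $\tilde a_{(n)} \tilde b = \widetilde{a_{(n)} b}$.

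The main technical obstacle is the Neumann-series termination step, where one must combine the strict $A$-weight raising of $\Delta$ with the local finiteness of $\hat A_{(0)}$; once this is in place the remaining verification is entirely formal and, in particular, works verbatim in the super setting referenced in the introduction.
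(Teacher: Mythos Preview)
Your proof is correct and follows essentially the same line as the paper's: both arguments observe that $\hat A_{(0)}$ is a derivation of each $n$-product (via the commutator formula), so $\tilde a_{(n)}\tilde b$ is an $\hat A_{(0)}$-eigenvector of eigenvalue $\lambda+\mu$ whose leading $A$-weight term is $a_{(n)}b$, whence $\tilde a_{(n)}\tilde b=\widetilde{a_{(n)}b}$, and similarly for $T$ and the vacuum. Your Neumann-series discussion of existence and uniqueness of $\tilde v$ is an elaboration of what the paper asserts without proof in the setup paragraph preceding the lemma, but the proof of the lemma itself is the same.
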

\begin{proof}
Clearly, \eqref{eq:automorphism} is a linear isomorphism.
We wish to show that 
 \eqref{eq:automorphism} is a homomorphism 
 of vertex algebras.
 It is clear that $\widetilde{|0\ket}=|0\ket$.
Let $v\in V[\lam]$,
$w\in V[\mu]$,
$n\in \Z$.
By \eqref{eq:commutator-formula}, $\tilde{v}_{(n)}\tilde{w}$
is an eigenvector
of  $\tilde{A}_{(0)}$ of eigenvalue $\lam+\mu$
and
$\tilde{v}_{(n)}\tilde{w}\equiv v_{(n)}w\pmod{V[>\lam+\mu]}$.
Therefore,
$\tilde{v}_{(n)}\tilde{w}=\widetilde{v_{(n)}w}$.
Similarly,
$T\tilde{v}$
is an  eigenvector
of  $\tilde{A}_{(0)}$ of eigenvalue $\lam$
such that 
$T\tilde{v}\equiv Tv \pmod{V[>\lam]}$.
Hence $T\tilde{v}=\widetilde{Tv}$.
This completes the proof.
\end{proof}
We also have the following.
\begin{Lem}\label{lem:can replace}
The action of $\hat{A}_{(0)}$ on $V$ coincides with 
that of $\tilde{A}_{(0)}$.
\end{Lem}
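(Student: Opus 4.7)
\textbf{Proof plan for Lemma \ref{lem:can replace}.}
The plan is to transport the identity ``$A_{(0)}$ acts as $\lambda$ on $V[\lambda]$'' through the automorphism $\sigma \colon v \mapsto \tilde v$ produced in Lemma \ref{Lem:auto}, and then compare the result with the definition of $\tilde v$ as an $\hat A_{(0)}$-eigenvector.

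First, I would record the general fact that a vertex algebra automorphism $\sigma$ intertwines all modes: $\sigma(a)_{(n)} = \sigma \circ a_{(n)} \circ \sigma^{-1}$ for every $a \in V$ and $n \in \Z$. Applying this with $a = A$ and $n = 0$ gives
\begin{equation*}
\tilde A_{(0)} \;=\; \sigma(A)_{(0)} \;=\; \sigma \circ A_{(0)} \circ \sigma^{-1}.
\end{equation*}
Next, I would pick any $v \in V[\lambda]$ and compute: by construction $\sigma^{-1}(\tilde v) = v$, so
\begin{equation*}
\tilde A_{(0)} \tilde v \;=\; \sigma\bigl(A_{(0)} v\bigr) \;=\; \sigma(\lambda v) \;=\; \lambda \tilde v.
\end{equation*}
On the other hand, the defining property of $\tilde v$ is that $\hat A_{(0)} \tilde v = \lambda \tilde v$. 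Hence $\tilde A_{(0)}$ and $\hat A_{(0)}$ agree on each $\tilde v$.

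Finally, I would observe that since $A_{(0)}$ is semisimple the vectors $v$ (with $v$ ranging over eigenvectors of $A_{(0)}$) span $V$, and since $\sigma$ is a linear isomorphism the vectors $\tilde v$ also span $V$. Thus $\tilde A_{(0)}$ and $\hat A_{(0)}$ coincide on a spanning set and therefore on all of $V$.

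The only potentially subtle point is the automorphism-intertwines-modes identity used in the first step; it is standard but deserves an explicit citation or a one-line justification via the translation $T$ and the reconstruction of $Y(a,z)$ from $a$, and I do not anticipate any real obstacle beyond invoking Lemma \ref{Lem:auto} cleanly.
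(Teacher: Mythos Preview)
Your proof is correct and takes a cleaner route than the paper's. You leverage Lemma~\ref{Lem:auto} directly: since $\sigma(v)=\tilde v$ is a vertex algebra automorphism, the identity $\sigma(a_{(n)}b)=\sigma(a)_{(n)}\sigma(b)$ immediately gives $\tilde A_{(0)}\tilde v=\widetilde{A_{(0)}v}=\lambda\tilde v$, which matches the defining equation $\hat A_{(0)}\tilde v=\lambda\tilde v$, and you are done. The paper instead argues pointwise without invoking the automorphism: it uses that $\hat A-\tilde A\in V[>0]$ so that $(\hat A_{(0)}-\tilde A_{(0)})v\in V[>\lambda]$, while $[\hat A_{(0)},\tilde A_{(0)}]=0$ (since $\hat A_{(0)}\tilde A=0$) forces $(\hat A_{(0)}-\tilde A_{(0)})v$ to also be an $\hat A_{(0)}$-eigenvector of eigenvalue $\lambda$; these two constraints are incompatible unless the vector vanishes. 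Your approach is more conceptual and shorter; the paper's approach is closer in spirit to the filtration arguments reused later in Proposition~\ref{Pro:Qnew=Qt}, which may be why the authors chose it.
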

\begin{proof}
For an eigenvector $v\in V[\geq \lam]$ of 
$\hat{A}_{(0)}$  of eigenvalue $\lam$,
$(\hat{A}_{(0)}-\tilde{A}_{(0)})v$ is also 
an eigenvector of $\hat{A}_{(0)}$  of eigenvalue $\lam$.
On the other hand,
 $(\hat{A}_{(0)}-\tilde{A}_{(0)})v$
 belongs to $V[>\lam]$.
 Since all eigenvalues of $\hat{A}_{(0)}$ 
 on $V[>\lam]$ are greater than $\lam$,
 the vector  $(\hat{A}_{(0)}-\tilde{A}_{(0)})v$
 must be zero.
 This completes the proof.
\end{proof}

Let $M$ be a $V$-module on which 
both $A_{(0)}$ and $\hat{A}_{(0)}$ act semisimply.
Set
$M[\lam]=\{m\in M\mid 
A_{(0)}m=\lam m\}$,
$M[>\lam]=\sum_{\mu>\lam}M[\mu]$.
We can define a linear isomorphism
\begin{align}
M\isomap M, \quad m\mapsto \tilde{m},
\label{eq:module-map}
\end{align}
that sends 
$m\in M[\lam]$
to 
a unique
eigenvector
$\tilde{m}$ of $\hat{A}_{(0)}$
of eigenvalue $\lam$ such that 
$\tilde{m}\equiv m\pmod{M[>\lam]}$.
The following assertion can be shown in the same manner as 
Lemma \ref{Lem:auto}.
\begin{Lem}\label{Lem:iso2}
We have $\widetilde{a_{(n)}m}=\tilde{a}_{(n)}\tilde{m}$
for $a\in V$, $m\in M$, $n\in \Z$,
that is,
\eqref{eq:module-map} is an isomorphism
of $V$-modules.
\end{Lem}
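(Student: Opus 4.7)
The plan is to imitate the proof of Lemma \ref{Lem:auto} essentially verbatim, using the uniqueness characterization of the map $m\mapsto\tilde m$. It suffices to fix $a\in V[\lam]$ and $m\in M[\mu]$ and verify $\widetilde{a_{(n)}m}=\tilde a_{(n)}\tilde m$ for every $n\in\Z$, since both sides are linear and the $A_{(0)}$-eigenspaces span $V$ and $M$ respectively.

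First I would confirm that $a_{(n)}m\in M[\lam+\mu]$. The module version of the commutator formula \eqref{eq:commutator-formula} gives $[A_{(0)},a_{(n)}]m=(A_{(0)}a)_{(n)}m=\lam\,a_{(n)}m$, and combining with $A_{(0)}m=\mu m$ yields $A_{(0)}(a_{(n)}m)=(\lam+\mu)a_{(n)}m$. Thus $\widetilde{a_{(n)}m}$ is, by definition, the unique $\hat A_{(0)}$-eigenvector of eigenvalue $\lam+\mu$ that is congruent to $a_{(n)}m$ modulo $M[>\lam+\mu]$.

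Next I would verify that $\tilde a_{(n)}\tilde m$ satisfies both defining properties. Since $\tilde a\in V$ and $\tilde m\in M$ are $\hat A_{(0)}$-eigenvectors of eigenvalues $\lam$ and $\mu$, the same commutator identity applied with $\hat A\in V$ on $M$ gives
\[
\hat A_{(0)}(\tilde a_{(n)}\tilde m)=(\hat A_{(0)}\tilde a)_{(n)}\tilde m+\tilde a_{(n)}\hat A_{(0)}\tilde m=(\lam+\mu)\tilde a_{(n)}\tilde m.
\]
For the congruence, write $\tilde a=a+a'$ with $a'\in V[>\lam]$ and $\tilde m=m+m'$ with $m'\in M[>\mu]$; expanding yields
\[
\tilde a_{(n)}\tilde m=a_{(n)}m+a_{(n)}m'+a'_{(n)}m+a'_{(n)}m',
\]
and each of the three error terms is a sum of products $b_{(n)}x$ with $b\in V[\kappa]$, $x\in M[\nu]$, and $\kappa+\nu>\lam+\mu$, hence lies in $M[>\lam+\mu]$ by the computation of the previous paragraph. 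By uniqueness the two characterizations coincide, giving $\tilde a_{(n)}\tilde m=\widetilde{a_{(n)}m}$.

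The proof presents no real obstacle beyond careful bookkeeping: the only genuine input from the module-side hypotheses is the semisimplicity of $\hat A_{(0)}$ on $M$, which is what guarantees that $\tilde m$ exists and is uniquely determined. The rest is the argument of Lemma \ref{Lem:auto} with $M$ in place of $V$ — indeed, if one wishes, one can first replace $\hat A_{(0)}$ by $\tilde A_{(0)}$ using the module analog of Lemma \ref{lem:can replace} to make the parallel with Lemma \ref{Lem:auto} exact.
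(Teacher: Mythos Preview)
Your proof is correct and is precisely the argument the paper intends: the paper does not give an explicit proof of Lemma~\ref{Lem:iso2} but simply says it ``can be shown in the same manner as Lemma~\ref{Lem:auto},'' and your write-up is exactly that elaboration. Your use of $\hat A_{(0)}$ throughout (rather than $\tilde A_{(0)}$) is fine and arguably cleaner, and your closing remark about Lemma~\ref{lem:can replace} correctly notes why the two formulations are interchangeable.
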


\section{Preliminaries on Drinfeld-Sokolov reduction}
Let $\g$ be a simple Lie algebra over $\C$,
and let $V^k(\g)$ be the universal affine vertex algebra associated with
$\g$ at level $k$ as in the introduction.
A $V^k(\g)$-module is the same as a smooth module $M$ of level $k$ over the affine Kac-Moody algebra $\affg$.
Here a $\affg$-module $M$ 
 is called {smooth}
if 
$x(z)=\sum_{n\in \Z}(xt^n)z^{-n-1}$ is a (quantum) field on $M$
for all $x\in \g$,
that is,
$(xt^n)m=0$ for a sufficiently large $n$
for any $m\in M$.

Let 
$f$ be a nilpotent element of $\g$.
Let
\begin{align}
\g=\bigoplus_{j\in \frac{1}{2}\Z}\g_j
\label{eq:grading}
\end{align}
be a good grading (\cite{KacRoaWak03}) of $\g$ for $f$,
that is,
$f\in \g_{-1}$,
$\ad f:\g_j\ra \g_{j-1}$ is injective for $j\geq 1/2$
and surjective for $j\leq 1/2$.
Denote by
$x_0$ the semisimple element of $\g$ that defines the grading,  i.e.,
\begin{align}
\g_j =\{x\in \g\mid
 [x_0,x] = jx\}. 
 \label{eq:x0}
\end{align}
We write $\deg x=d$ if $x\in \g_d$.

Let $\{e,h,f\}$ be an $\mf{sl}_2$-triple associated with $f$ in $\g$.
Then the grading defined by $x_0=1/2h$ is good,
and is called a {\em Dynkin grading}.

Fix a Cartan subalgebra $\h$ of $\g$ that is contained in the Lie subalgebra $\g_0$.
Let
$\Delta$ be the set of roots of $\g$,
$\g=\h\+ \bigoplus_{\alpha\in \Delta}\g_{\alpha}$ the root space decomposition.
Set $\Delta_j=\{\alpha\in \Delta\mid \g_{\alpha}\subset \g_j\}$,
so that $\Delta=\bigsqcup_{j\in \frac{1}{2}\Z}\Delta_j$.
Put $\Delta_{>0}=\bigsqcup_{j>0}\Delta_j$.
Let $I=\{1,2,\dots, \on{rk}\g\}$,
and
let $\{x_a\mid a\in I\sqcup \Delta \}$
be a basis of $\g$ such that $x_{\alpha}\in \g_{\alpha}$,
$\alpha\in \Delta$,
and
$x_i\in \h$, $i\in I$.
Denote by $c_{a b}^d$ the corresponding structure constant.

Set  $\g_{\geq 1}=\bigoplus_{j\geq 1}\g_j$,
$\g_{>0}=\bigoplus_{j\geq 1/2}\g_j$,
and
let  $\chi:\g_{\geq 1}\ra \C$
be the character defined by
$\chi(x)=(f|x)$.
We extend $\chi$ to the character $\hat{\chi}$ of 
$\g_{\geq 1}[t,t^{-1}]$ by setting
$\hat\chi(xt^n)= \delta_{n,-1}\chi(x)$.
Define $$F_{\chi}=U(\g_{>0}[t,t^{-1}])\otimes_{U(\g_{>0}[t]+\g_{\geq 1}[t,t^{-1}])}\C_{\hat\chi},$$
where $\C_{\hat \chi}$ is the one-dimensional representation of 
$\g_{>0}[t]+\g_{\geq 1}[t,t^{-1}]$
on which $\g_{\geq 1}[t,t^{-1}]$ acts by  the character $\hat \chi$
and  $\g_{>0}[t]$ acts trivially.
Since it is a smooth $\g_{>0}[t,t^{-1}]$-module,
the space $F_{\chi}$ is a module over the  vertex subalgebra $V(\g_{>0 })\subset V^k(\g)$
generated by $x_{\alpha}(z)$ with $\alpha\in \Delta_{>0}$.
For $\alpha\in \Delta_{> 0}$,
let $\Phi_{\alpha}(z)$ denote the image of $x_{\alpha}(z)$ in 
$(\End F_{\chi})[[z,z^{-1}]]$.
Then
\begin{align*}
\Phi_{\alpha}(z)=\chi(x_\alpha)\quad \text{for  }\alpha\in \Delta_{\geq 1}
\end{align*}
and
\begin{align*}
\Phi_{\alpha}(z)\Phi_{\beta}(w)\sim \frac{\chi([x_{\alpha},x_{\beta}])}{z-w}.
\end{align*}
There is a unique vertex algebra structure
on $F_{\chi}$
such that $|0\ket=1\* 1$ is the vacuum vector and 
\begin{align*}
Y((\Phi_{\alpha})_{(-1)}|0\ket ,z)=\Phi_{\alpha}(z)
\end{align*}
for $\alpha\in \Delta_{>0}$.
(Note that $(\Phi_{\alpha})_{(-1)}|0\ket=\chi(x_{\alpha})|0\ket $ for $\alpha\in \Delta_{\geq 1}$.)
In other words,
$F_{\chi}$ has the structure of the $\beta\gamma$-system 
associated with the symplectic vector space $\g_{1/2}$
with the symplectic form
\begin{align}
\label{eq:symp-form-on}
\g_{1/2}\times \g_{1/2}\ra \C,\quad
(x, y)\mapsto  \chi([x,y]).
\end{align}

Next, let $\bigwedge^{\infty/2+\bullet}(\g_{>0})$ be the vertex superalgebra
generated by odd fields
$\psi_{\alpha}(z)$,
$\psi_{\alpha}^*(z)$,
$\alpha\in \Delta_{>0}$,
with the OPEs
\begin{align*}
\psi_{\alpha}(z)\psi_{\beta}^*(z)\sim \frac{\delta_{\alpha,\beta}}{z-w},
\quad \psi_{\alpha}(z)\psi_{\beta}(z)\sim
\psi_{\alpha}^*(z)\psi_{\beta}^*(z)\sim 0
\end{align*}
Let 
$\bigwedge^{\infty/2+\bullet}(\g_{>0})=\bigoplus_{n\in \Z}
\bigwedge^{\infty/2+n}(\g_{>0})$ be the
$\Z$-gradation 
defined by
$\deg |0\ket =0$,
$\deg (\psi_\alpha)_{(n)}=-1$,
$\deg (\psi_\alpha^*)_{(n)}=1$.

For a smooth $\affg$-module $M$,
set
\begin{align}
C(M):=M\* F_{\chi}\* \bigwedge\nolimits^{\infty/2+\bullet}(\g_{>0}).
\end{align}
Then $C(M)=\bigoplus\limits_{i\in \Z}C^i(M)$,
$C^i(M)=M\* F_{\chi}\* \bigwedge\nolimits^{\infty/2+i}(\g_{>0})$.
Note that $C(V^k(\g))$ is naturally a vertex superalgebra
and
$C(M)$ is a module over the vertex superalgebra
$C(V^k(\g))$
for any smooth $\affg$-module $M$.
Define 
\begin{align*}
Q(z)=\sum_{\alpha\in \Delta_{>0}}x_\alpha(z)\psi_{\alpha}^*(z)
+\sum_{\alpha\in \Delta_{>0}}\Phi_{\alpha}(z)\psi_{\alpha}^*(z)
-\frac{1}{2}
\sum_{\alpha,\beta,\gamma\in \Delta_{>0}}c_{\alpha,\beta}^\gamma
\psi_{\alpha}^*(z)\psi_{\beta}^*(z)\psi_{\gamma}(z),
\end{align*}
where 
we have omitted the tensor product symbol.
Then $Q(z)Q(w)\sim 0$ and we have
$Q_{(0)}^2=0$
on any $C(V^k(\g))$-module.
The cohomology
\begin{align*}
H^\bullet_{DS,f}(M):
=H^{\bullet}(C(M),Q_{(0)})
\end{align*}
is called the BRST cohomologyof the Drinfeld-Sokolov reduction
associated with $f$ with coefficients in $M$
(\cite{FF90,KacRoaWak03}, see also \cite{Ara05}).
By definition \cite{Feu84},
$H^\bullet_{DS,f}(M)$
is   the semi-infinite $\g_{>0}[t,t^{-1}]$-cohomology 
 $H^{\frac{\infty}{2}+\bullet}(\g_{>0}[t,t^{-1}],M\otimes  F_{\chi})$
with coefficients in the diagonal $\g_{>0}[t,t^{-1}]$-module $M\otimes  F_{\chi}$.

The vertex algebra 
$$\W^k(\g,f):=H^0_{DS,f}(V^k(\g))$$
is called the $W$-algebra associated with $(\g,f)$ at level $k$,
which is 
  conformal provided that $k\ne -h^{\vee}$.
The vertex algebra structure
of $\W^k(\g,f)$ does not depend on the choice of a good grading (\cite{AKM}),
however, its conformal structure does.
The central charge of $\W^k(\g,f)$ is given by
\begin{align}
\dim \g-\frac{1}{2}\dim \g_{1/2}-12|\frac{\rho}{\sqrt{k+h^{\vee}}}-\sqrt{k+h^{\vee}}x_0|^2,
\end{align}
where $\rho$ is the half sum of positive roots of $\g$.

Let $\W_k(\g,f)$ be the unique simple graded quotient of $\W^k(\g,f)$.

Let $\on{KL}$ be the full subcategory 
of $\affg$-modules
 consisting of objects
on which $\g$ acts  semisimply
and $t\g[t]$ acts locally nilpotently,
and let $\on{KL}_k$
be the full subcategory of $\on{KL}$ 
consisting of modules of level $k$.

Let $Q$ be the root lattice of $\g$, 
$\check{Q}$ the coroot lattice, 
$P$ the weight lattice,
$\check{P}$ the coweight lattice,
$P_+$  the set of dominant integral weights,
$\check{P}_+$  the set of dominant integral coweights of $\g$.
For $\lam\in P_+$,
set
\[
\mathbb{V}^k_\lambda := U(\widehat\g) \otimes _{U(\g[t]\oplus \C K)} E_\lambda\in \on{KL}_k,
\]
where $E_\lambda$ is the irreducible finite-dimensional $\g$-module of highest weight $\lambda$ lifted to a $\g[t]$-module 
by letting $\g[t]t$ act trivially and $K$ by multiplication with the scalar $k$. 
Note that $V^k(\g)=\mathbb{V}^k_0$ as a $\affg$-module.
We denote by $\Si{k}{\lam}$ the unique simple quotient of $\mathbb{V}^k_\lambda$.
More generally,
for any weight $\lam$ of $\g$
we denote by $\Si{k}{\lam}$ the irreducible highest weight representation of $\affg$ with highest weight
$\lam$ and level $k$.

\begin{Th}[\cite{FreGai07,Ara09b}]\label{Th:vanishing}
We have
$H_{DS,f}^i(M)=0$ for  $i\ne 0$ and $M\in \on{KL}_k$.
In particular,
the functor
\begin{align*}
\on{KL}_k\ra \W^k(\g,f)\on{-Mod},\quad M\mapsto H_{DS,f}^0(M),
\end{align*}
is exact.
\end{Th}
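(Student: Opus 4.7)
The plan is to introduce a Kazhdan-type filtration on the BRST complex $C(M)$ and analyze the resulting spectral sequence. By Feigin's identification recalled above,
\[
H^\bullet_{DS,f}(M) \cong H^{\frac{\infty}{2}+\bullet}(\g_{>0}[t,t^{-1}], M \otimes F_\chi),
\]
so the task reduces to proving that this semi-infinite cohomology is concentrated in degree zero whenever $M \in \on{KL}_k$.

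I would equip $C(M)$ with the Kazhdan filtration: assign to each field $x_a(z)$ with $x_a \in \g_j$ the Kazhdan weight $1-j$, and to the $\beta\gamma$-generators $\Phi_\alpha$ and ghosts $\psi_\alpha,\psi^*_\alpha$ the half-integer weights dictated by \eqref{eq:grading} and \eqref{eq:x0}. With this choice, the constant "character" piece $\sum_{\alpha\in\Delta_{\geq 1}} \chi(x_\alpha)\psi^*_\alpha(z)$ of $Q(z)$ lies in strictly lower filtration than the remaining "chiral Chevalley--Eilenberg" pieces; writing $Q_{(0)}=d_0+d_1$ with $d_0$ of top filtration degree, the induced differential on $\gr C(M)$ is the chiral Chevalley--Eilenberg differential computing $H^{\infty/2+\bullet}(\g_{>0}[t,t^{-1}], M\otimes \gr F_\chi)$, where $\gr F_\chi$ is a free module over the negative half $t^{-1}\g_{>0}[t^{-1}]$. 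The $\on{KL}_k$ hypothesis guarantees that $\g[t]$ acts locally finitely on $M$ with $t\g[t]$ locally nilpotent, and together with the freeness of $\gr F_\chi$ this identifies $(\gr C(M),d_0)$ with a chiral Koszul-type resolution whose cohomology is concentrated in degree zero.

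Finally, the Kazhdan filtration is bounded below and exhaustive on each generalized eigenspace of $L_0 - x_0(0)$---a property that relies directly on the local finiteness of the $\g$-action on $M$---so the associated spectral sequence converges and the vanishing at $E_1$ propagates to $H^\bullet_{DS,f}(M)$. The main obstacle is twofold: the $E_1$ identification must simultaneously control the infinite-dimensional factors $F_\chi$ and $\bigwedge^{\infty/2+\bullet}(\g_{>0})$ inside $C(M)$, producing the chiral Koszul vanishing in the semi-infinite setting (the chiral analog of the fact that induced modules carry no higher $\on{Ext}$); and convergence of the spectral sequence is delicate, since without the $\on{KL}_k$ hypothesis the filtration would fail to be locally bounded and higher differentials could conceivably contribute in positive degree. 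Both issues are resolved by the combinatorics of the grading \eqref{eq:grading} together with the local finiteness built into $\on{KL}_k$.
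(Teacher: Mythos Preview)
The paper does not supply its own proof of this theorem; it is quoted from \cite{FreGai07,Ara09b} and used as input for everything that follows, so there is no in-paper argument to compare against.

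Your sketch has a genuine gap. Under the Kazhdan weights you assign (elements of $\g_j$ get weight $1-j$, with the dual weight $j-1$ on $\psi^*_\alpha$ for $\alpha\in\Delta_j$), the Chevalley--Eilenberg term $x_\alpha(z)\psi^*_\alpha(z)$ and the character term $\chi(x_\alpha)\psi^*_\alpha(z)$ (nonzero only for $\alpha\in\Delta_1$) both carry total weight zero, so the character piece does not lie in strictly lower filtration as you claim. More seriously, even granting some filtration with $d_0$ equal to the untwisted Chevalley--Eilenberg differential, the $E_1$ page would compute $H^{\infty/2+\bullet}(\g_{>0}[t,t^{-1}], M\otimes\gr F_\chi)$, and this is \emph{not} concentrated in degree zero: your assertion that $\gr F_\chi$ is free over $t^{-1}\g_{>0}[t^{-1}]$ is false, because the ideal $\g_{\geq 1}[t,t^{-1}]$ acts on $F_\chi$ through the character $\hat\chi$ and hence by zero on the associated graded, so only $t^{-1}\g_{1/2}[t^{-1}]$ acts freely. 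The residual contribution from $\g_{\geq 1}$ then produces ordinary Lie algebra cohomology $H^{\bullet}(\g_{\geq 1},-)$, which is nonzero in positive degrees whenever $\g_{\geq 1}\ne 0$.

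The arguments in the cited references run the spectral sequence the other way. One first passes to the subcomplex $C_{\leq 0}$ (exactly as in Section~\ref{section:proof} of the present paper), on which the cohomological grading is nonnegative so that $H^{<0}=0$ is automatic; one then filters so that $d^\chi$ is the \emph{first} differential. The character $\chi$ makes the associated graded into a genuine Koszul complex, acyclic outside degree zero, and the $\on{KL}_k$ hypothesis enters only to guarantee the finiteness of weight spaces needed for convergence.
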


Note that
for
$M\in \on{KL}_k, N\in \on{KL}_{\ell}$,
we have $M\*N\in \on{KL}_{k+\ell}$.
Therefore,
$H^i_{DS,f}(M\* N)=0$ for $i\ne 0$.
In particular,
$H^i_{DS,f}(M\* L)=0$ for $i\ne 0$
if $M\in\on{KL}$ and 
$L$ is an integrable representation of $\affg$.


%
%
%

%
%
%
%
%
%

\section{Proof of  Theorem \ref{MainTh:iso}}\label{section:proof}
Let $k\in \C$,
$\ell\in \Z_{\geq 0}$,
and
set
\begin{align*}
C:=C(V^k(\g)\* \L_{\ell}(\g))=V^k(\g)\* \L_{\ell}(\g)\* 
F_\chi\*\bigwedge\nolimits^{\infty/2+\bullet}(\g_{>0}).
\end{align*}
For $t\in \C$, define
the element $Q_t\in C$ by
\begin{align}
Q_t(z)=&
\sum_{\alpha\in \Delta_{>0}}(\pi_1(x_\alpha)(z)+t^{2\alpha(x_0)}\pi_2(x_\alpha)(z))\psi_{\alpha}^*(z)\\
&
+\sum_{\alpha\in \Delta_{>0}}\Phi_{\alpha}(z)\psi_{\alpha}^*(z)
-\frac{1}{2}
\sum_{\alpha,\beta,\gamma\in \Delta_{>0}}c_{\alpha,\beta}^\gamma
\psi_{\alpha}^*(z)\psi_{\beta}^*(z)\psi_{\gamma}(z),
\nonumber
\end{align}
where 
$\pi_1(x_a)(z)$ (resp.\ $\pi_2(x_a)(z)$)
denotes the action of $x_a(z)$, $a\in I\sqcup \Delta$, on 
$V^k(\g)$ (resp.\ on $\L_{\ell}(\g)$).
Then
$Q_t(z)Q_t(w)\sim 0$,
and therefore,
$(Q_t)_{(0)}^2=0$.
It follows that
$(C,(Q_t)_{(0)})$ is a differential graded vertex algebra,
and the corresponding cohomology
$H^{\bullet}(C,(Q_t)_{(0)})$ is naturally a vertex algebra.
Clearly,
\begin{align}
&H^{i}(C,(Q_{t=0})_{(0)})\cong H_{DS,f}^i(V^k(\g))\* \L_{\ell}(\g)
=\delta_{i,0}\W^k(\g,f)\* \L_{\ell}(\g),\\
&H^{i}(C,(Q_{t=1})_{(0)})\cong H_{DS,f}^i(V^k(\g)\* \L_{\ell}(\g))
=\delta_{i,0}H_{DS,f}^0(V^k(\g)\* \L_{\ell}(\g)),
\end{align}
see Theorem \ref{Th:vanishing}.

By \cite[3.7]{Ara05},
the differential $(Q_t)_{(0)}$ decomposes  as
  \begin{align}
(Q_t)_{(0)}=d_t^{st}+d^{\chi},
\quad (d_t^{st})^2=(d^{\chi})^2=\{d_t^{st},d^{\chi}\}=0,
\label{eq:dec-diff}
\end{align}
where 
\begin{align}
\nonumber  &d_t^{st}=\sum_{\alpha\in \Delta_{>0}}\sum_{n\in \Z}(\pi_1(x_{\alpha})_{(n)}+t^{2\alpha(x_0)}\pi_2(x_{\alpha})_{(n)})\psi_{\alpha,(-n)}^*\\
&\qquad\qquad\quad+\sum_{\alpha\in \Delta_{1/2}}\sum_{n< 0}\Phi_{\alpha,(n)}\psi^*_{\alpha,(-n)}\\
&\qquad \qquad \quad -\frac{1}{2}
\sum_{\alpha,\beta,\gamma\in \Delta_{>0}}c_{\alpha,\beta}^\gamma
\sum_{m,n\in \Z}
\psi_{\alpha,(m)}^*\psi_{\beta,(n)}^*\psi_{\gamma,(-m-n)},
\nonumber
\\
&d^{\chi}=\sum_{\alpha\in \Delta_{1/2}}\sum_{n\geq 0}\Phi_{\alpha,(n)}\psi^*_{\alpha,(-n)}
+\sum_{\alpha\in \Delta_1}\chi(x_{\alpha})\psi^*_{\alpha,(0)}.
\end{align}

Define the Hamiltonian $H$
on $C$ by 
\begin{align*}
H=H_{standard}^{V^k(\g)}+(\omega_{\L_{\ell}(\g)})_{(1)}
+(\omega_{F_{\chi}})_{(1)}+(\omega_{\bigwedge\nolimits^{\infty/2+\bullet}(\g_{>0})})_{(1)}
-\pi_1(x_0)_{(0)}-\pi_2(x_0)_{(0)},
\end{align*}
where 
$H_{standard}^{V^k(\g)}$
is the standard Hamiltonian of $V^k(\g)$ that 
gives 
$\pi_1(x)$, $x \in \g$,
conformal weight  one,
$\omega_{\L_{\ell}(\g)}$ is the Sugawara conformal vector 
of $\L_{\ell}(\g)$,
\begin{align*}
&\omega_{F_{\chi}}(z)=\frac{1}{2}\sum\limits_{\alpha\in \Delta_{1/2}}:\partial_z\Phi^{\alpha}(z) \Phi_{\alpha}(z):,\\
&\omega_{\bigwedge\nolimits^{\infty/2+\bullet}(\g_{>0})}(z)=
\sum_{j>0}\sum_{\alpha\in \Delta_{j}}j:\psi_{\alpha}^*(z)\partial \psi_{\alpha}(z):
+\sum_{j>0}\sum_{\alpha\in \Delta_{j}}(1-j):(\partial \psi_{\alpha}^*(z))\psi_{\alpha}(z):.
\end{align*}
Here,
$\{\Phi^{\alpha}\}$ is a dual basis
to $\{\Phi_{\alpha}\}$,
that is,
$\Phi^{\alpha}(z)\Phi_{\beta}(w)\sim \delta_{\alpha \beta}/(z-w)$.
Then $[H, d_t^{st}]=0=[H,d_t^{\chi}]$,
and thus,
$H$ defines a Hamiltonian on
the vertex algebra  $H^{\bullet}(C,(Q_t)_{(0)})$.


Following \cite{FreBen04,KacRoaWak03},
define
 \begin{align*}
J^a(z)={\pi}_1(x_a)(z)+\sum_{\beta,\gamma\in \Delta_+}c_{a,\beta}^\gamma :\psi_{\gamma}(z)\psi_{\beta}^*(z):
\end{align*}
for $a\in I\sqcup \Delta$.
We also 
 denote by
$J^x$ the linear combination
of $J^a$,
$a\in I\sqcup \Delta_{\leq 0}$,
corresponding to $x\in \g_{\leq 0}:=\bigoplus_{j\leq 0}\g_j$.
Let $C_{\leq 0}$ be the vertex subalgebra
of $C$ generated by
$J^x(z$) ($x\in \g_{\leq 0}$),
$\pi_2(x)(z)$ ($x\in \g$),
$\psi^*_{\alpha}(z)$ ($\alpha\in \Delta_{>0}$),
$\Phi_\alpha(z)$
($\alpha\in \Delta_{1/2}$),
and 
let $C_{> 0}$ be the vertex subalgebra
of $C$ generated by
$\psi_{\alpha}(z)$
and
$$((Q_t)_{(0)}\psi_\alpha)(z)=J^\alpha(z)+t^{\alpha(h)}\pi_2(x_{\alpha})(z)+\Phi_{\alpha}(z)$$
($\alpha\in \Delta_{>0}$).

As in \cite{FreBen04,KacRoaWak03},
we find that
both $C_{\leq 0}$
and $C_{> 0}$
are subcomplexes of $(C, (Q_t)_{(0)})$
and that
$C\cong C_{\leq 0}\* C_{>0}$ as 
complexes.
Moreover,
we have $$H^i(C_{>0},(Q_t)_{(0)})=\begin{cases}\C&\text{for }i= 0\\
0&\text{for }i\ne 0,\end{cases}$$ and therefore,
\begin{align}
H^{\bullet}(C, (Q_t)_{(0)})\cong H^\bullet(C_{\leq 0},(Q_t)_{(0)})
\end{align}
as vertex algebras.
Since the cohomological gradation takes only non-negative values on $C_{\leq 0}$,
it follows that
$H^0(C,(Q_t)_{(0)})= H^0(C_{\leq 0},(Q_t)_{(0)})$
is a vertex subalgebra of $C_{\leq 0}$.

Note that the vertex algebra $C_{\leq 0}$ does not depend on the parameter $t\in \C$.
Also,
$C_{\leq 0}$ is preserved by the action of both
$d_t^{st}$ and $d^{\chi}$.

Let $C_{\leq 0,\Delta}=C_{\leq 0}\cap C_{\Delta}$,
so that 
$C_{\leq 0}=\bigoplus_{\Delta}C_{\leq 0,\Delta}$.
\begin{Lem}
For each $\Delta$,
$C_{\leq 0,\Delta}$ is a finite-dimensional subcomplex
of $C_{\leq 0}$.
\end{Lem}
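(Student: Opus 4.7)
The first claim, that $C_{\leq 0,\Delta}$ is a subcomplex, is essentially immediate from the paragraph preceding the lemma: $C_{\leq 0}$ is closed under both $d_t^{st}$ and $d^{\chi}$, and the Hamiltonian $H$ was constructed to commute with each, so $(Q_t)_{(0)}$ preserves every weight space. The substance of the lemma is the finite-dimensionality.

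The strategy is to factor $C_{\leq 0}$ as a tensor product of two vertex subalgebras of $C$ living in commuting tensor factors of $C=V^k(\g)\otimes \L_\ell(\g)\otimes F_\chi\otimes \bigwedge\nolimits^{\infty/2+\bullet}(\g_{>0})$. Let $D$ denote the vertex subalgebra of $V^k(\g)\otimes F_\chi\otimes \bigwedge\nolimits^{\infty/2+\bullet}(\g_{>0})$ generated by $\{J^x:x\in \g_{\leq 0}\}\cup \{\psi^*_\alpha:\alpha\in\Delta_{>0}\}\cup \{\Phi_\alpha:\alpha\in \Delta_{1/2}\}$. Then $D$ commutes with the image of $\L_\ell(\g)$ under $\pi_2$, and together they generate $C_{\leq 0}$, which gives
\[
C_{\leq 0}\;\cong\; D\otimes \L_\ell(\g).
\]
The operator $H$ decomposes correspondingly as a sum of commuting Hamiltonians on the two factors (the $\pi_1(x_0)_{(0)}$, $F_\chi$ and ghost pieces acting on $D$, the Sugawara and $\pi_2(x_0)_{(0)}$ pieces acting on $\L_\ell(\g)$), so the $H$-grading respects the decomposition and it suffices to bound $H$-weight spaces on each factor.

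For $D$, the finite-dimensionality follows from a direct PBW count. Reading the conformal weights of the generators off the definition of $H$: $J^x$ for $x\in \g_j$ with $j\le 0$ has weight $1-j\ge 1$; $\psi^*_\alpha$ for $\alpha\in \Delta_j$ has weight $j>0$; and $\Phi_\alpha$ for $\alpha\in \Delta_{1/2}$ has weight $1/2$. Every creation mode $a_{(-n)}$ ($n\ge 1$) of these generators therefore creates a state of strictly positive weight, so only finitely many ordered PBW monomials contribute to each fixed weight and $D[\Delta]$ is finite-dimensional.

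The crux is $\L_\ell(\g)$, on which $H$ acts as $L_0^{\mathrm{Sug}}-\pi_2(x_0)_{(0)}$. Here integrability is essential: since $\L_\ell(\g)$ is an integrable highest weight $\affg$-module, the quadratic Casimir of $\affg$ is a scalar on $\L_\ell(\g)$, which yields the standard estimate $(\mu|\mu)+2 n\ell\le C$ for any $(\g$-weight, $L_0)$-pair $(\mu,n)$ occurring in $\L_\ell(\g)$. This forces $|x_0(\mu)|=O(\sqrt{n})$, so the equation $n-x_0(\mu)=\Delta$ confines $n$ to a bounded range for each fixed $\Delta$. Since each $L_0$-eigenspace of $\L_\ell(\g)$ is itself finite-dimensional, $\L_\ell(\g)[\Delta]$ is finite-dimensional, and combining with the bound on $D$ yields the lemma. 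The integrability input is the main obstacle; everything else is formal PBW bookkeeping.
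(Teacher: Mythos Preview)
Your argument is correct. The overall architecture---factoring $C_{\leq 0}\cong D\otimes \L_\ell(\g)$, showing all generators of $D$ have strictly positive $H$-weight so that $D$ has finite-dimensional weight spaces concentrated in non-negative degree, and then handling $\L_\ell(\g)$ separately---is exactly what the paper does. The difference lies in how the $\L_\ell(\g)$ piece is treated. The paper observes that the restriction of $H$ to $\L_\ell(\g)$ is the $L_0$ of the twisted module obtained from $\L_\ell(\g)$ via Li's delta operator for $-x_0$, and then appeals to the fact that $\L_\ell(\g)$ is rational and lisse, so any such twisted module has finite-dimensional graded pieces with grading bounded below. Your route is more hands-on: you use the norm bound on weights of an integrable highest weight module (the Casimir being scalar) to get $|\mu|^2=O(n)$, hence $|x_0(\mu)|=O(\sqrt{n})$, which forces $n-x_0(\mu)\to\infty$ and bounds the admissible $n$ for each $\Delta$. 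This is more elementary and, notably, isolates integrability as the only input---in line with the paper's own remark in Section~\ref{sec:super} that rationality and lisseness are not actually needed, only the two properties listed there.

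One small point to make explicit: when you write ``combining with the bound on $D$ yields the lemma,'' you are implicitly using that the $H$-weights on $\L_\ell(\g)$ are bounded \emph{below}, not just that each weight space is finite-dimensional; otherwise the sum $\bigoplus_{\Delta_1+\Delta_2=\Delta} D[\Delta_1]\otimes \L_\ell(\g)[\Delta_2]$ could still be infinite. This lower bound follows immediately from your estimate $n-x_0(\mu)\geq n-C''\sqrt{1+n}$, whose minimum over $n\geq 0$ is finite, but it is worth saying so.
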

\begin{proof}
The generators
$J^x(z)$ ($x\in \g_{\leq 0}$),
$\psi^*_{\alpha}(z)$ ($\alpha\in \Delta_{>0}$),
and
$\Phi_\alpha(z)$
($\alpha\in \Delta_{1/2}$) 
have positive conformal weights
with respect to the Hamiltonian $H$.
Therefore,
it is sufficient to show that
the vertex subalgebra
$\L_{\ell}(\g)$
that is generated by
$\pi_2(x)(z)$, $x\in \g$,
has finite-dimensional
weight spaces
$\L_{\ell}(\g)_{\Delta}:=\L_{\ell}(\g)\cap C_{\leq 0,\Delta}$
and the conformal weights of $L_{\ell}(\g)$
is bounded from the above.
On the other hand,
the action of  $H$
on $\L_{\ell}(\g)$ is the same as the twisted action
of $(\omega_{\L_{\ell}(\g)})_{(1)}$
corresponding to Li's delta operator
associated with $-x_0$.
Since $\L_{\ell}(\g)$ is rational and lisse,
 the conformal weights of
this twisted representation 
are bounded from above
and the weight spaces are finite-dimensional.
 This completes the proof.
\end{proof}

Since 
both
$d_t^{st}$ abd $d^{\chi}$
preserve  $C_{\leq 0,\Delta}$,
we can consider 
the spectral seqeunce
$E_r\Rightarrow H^{\bullet}(C_{\leq 0})$
such that 
$d_0=d^{\chi}$
and $d_1=d_t^{st}$,
which is converging since 
each $C_{\leq 0,\Delta}$ is finite-dimensional.
As in \cite{FreBen04,KacWak04,KacWak05},
we find that
\begin{align}
E_1^{\bullet,q}
=H^q(C_{\leq 0},d^{\chi})=\delta_{q,0}V^{k^{\natural}}(\g^f)\* \L_{\ell}(\g),
\end{align}
where 
$V^{k^{\natural}}(\g^f)$ is the 
vertex subalgebra
of 
$C_{\leq 0}$ generated by
$J^x(z)$, $x\in \g^f$.
Thus,
 the spectral sequence collapses at $E_1=E_{\infty}$,
and we get the vertex algebra isomorphism
\begin{align}
\gr H^q(C_{t,\leq 0})\cong \delta_{q,0}V^{k^{\natural}}(\g^f)\* \L_{\ell}(\g).
\label{eq:gr-HCt}
\end{align}
Here $\gr H^q(C_{t,\leq 0})$
is the associated  graded vertex algebra
with respect to the 
filtration that defines the spectral sequence.
By \eqref{eq:gr-HCt}, 
for each $v\in V^{k^{\natural}}(\g^f)\* \L_{\ell}(\g)$
there exists a cocycle
\begin{align*}
\hat{v}=v_0+v_1+v_2+\dots
\quad\text{(a finite sum)}
\end{align*}
such that
\begin{align*}
v_0=v,\quad d^{st}v_i=-d^{\chi}v_{i+1}.
\end{align*}
Set \begin{align}
A:=\pi_2(x_0)\in \L_{\ell}(\g),
\label{eq:A-for-translation}
\end{align}
where we recall that $x_0$ is defined by  \eqref{eq:x0}.
Then $C_{\leq 0}=\bigoplus_{\lam\in \frac{1}{2}\Z}C_{\leq 0}
[\lam]$,
$C_{\leq 0}[\lam]=\{c\in C_{\leq 0}\mid A_{(0)}c=\lam c\}$,
and $A\in C_{\leq 0}[0]$.
Consider the corresponding cocycle
$\hat{A}$ that has cohomolological degree zero.
Since $d^{st}_tA\subset C_{\leq 0}[>0]$
and $d^{\chi}C_{\leq 0}[\lam]\subset C_{\leq 0}[\lam]$,
we may assume that
$\hat{A}\equiv A\pmod{C_{\leq 0}[>0]}$.
Moreover, we may assume that 
$\hat{A}$ is homogenous with respect to the
Hamiltonian $H$, 
so that 
$\hat{A}$ preserves each 
finite-dimensional
subcomplex $C_{\leq 0,\Delta}$.
In particular,
$\hat{A}$  is locally finite on $C_{\leq 0}$.
Therefore, we can apply the construction
of Section \ref{section:auto}
to obtain
the automorphism
\begin{align}\label{eq:auto1}
\mi_t:C_{\leq 0}\isomap C_{\leq 0},\quad c\mapsto \tilde{c}
\end{align}
of vertex algebras.

Note that
$\tilde{A}\equiv A\pmod{C[>0]}$ and
the operator $A_{(0)}$ acts 
semisimply
on the whole complex $C$,
so that 
$C=\bigoplus_{\lam\in \frac{1}{2}\Z}C[\lam]$,
$C[\lam]=\{c\in C\mid A_{(0)}c=\lam c\}$.
Since $\mi_t$ is an automorphism of the vertex algebra,
$\tilde{A}\in \mi_t(\L_{\ell}(\g))$
generates a Heisenberg vertex subalgebra.
In particular,
$\tilde{A}_{(0)}$ acts semisimply on $C$.
Therefore,
by replacing $\hat{A}$ by $\tilde{A}$ (see 
Lemma \ref{lem:can replace}),
we can extend the automorphism
\eqref{eq:auto1} to the automorphism
\begin{align}\label{eq:auto2}
C\isomap C\quad c\mapsto \tilde{c},
\end{align}
which is also denote by $\mi_t$.

\begin{Pro}\label{Pro:Qnew=Qt}
We have
$\mi_t(Q_{t=0})_{(0)}=( Q_{t})_{(0)}$ on $C$.
In particular,
$\mi_{t}$ defines an isomorphism
\begin{align*}
(C,(Q_{t=0})_{(0)})\cong (C,(Q_{t})_{(0)})
\end{align*}
of differential graded vertex algebras.
\end{Pro}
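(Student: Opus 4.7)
The plan is to prove the stronger state-level identity $\mi_t(Q_{t=0})=Q_t$ in $C$; the operator identity $(\mi_t(Q_{t=0}))_{(0)}=(Q_t)_{(0)}$ then follows immediately since $\mi_t$ is a vertex algebra automorphism and therefore intertwines the state-field correspondence. Note that $A=\pi_2(x_0)$ has trivial singular OPE with each of the fields $\pi_1(x_\alpha), \Phi_\alpha, \psi_\alpha, \psi^*_\alpha$ appearing in $Q_{t=0}$, so $A_{(0)}Q_{t=0}=0$ and $Q_{t=0}\in C[0]$. By the defining property of $\mi_t$ from Section \ref{section:auto} (applied on $V=C$ with the semisimple operator $\tilde A_{(0)}$), the image $\mi_t(Q_{t=0})$ is uniquely characterized as the $\tilde A_{(0)}$-eigenvector of eigenvalue $0$ satisfying $\mi_t(Q_{t=0})\equiv Q_{t=0}\pmod{C[>0]}$. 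Hence it suffices to verify both conditions for $Q_t$.

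The congruence is immediate: $Q_t-Q_{t=0}=\sum_{\alpha\in\Delta_{>0}}t^{2\alpha(x_0)}\pi_2(x_\alpha)\psi^*_\alpha$, and each summand lies in $C[\alpha(x_0)]\subseteq C[>0]$ because $A_{(0)}$ acts on $\pi_2(x_\alpha)$ with eigenvalue $\alpha(x_0)>0$ and trivially on $\psi^*_\alpha$.

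For the eigenvector property $\tilde A_{(0)}Q_t=0$, I would first establish that $\tilde A$ is itself a cocycle for $(Q_t)_{(0)}$. By Lemma \ref{lem:can replace}, $\tilde A_{(0)}$ coincides with $\hat A_{(0)}$ on $C_{\leq 0}$, so $(Q_t)_{(0)}$ commutes with $\tilde A_{(0)}$ there; hence $(Q_t)_{(0)}\tilde A$ is a $\tilde A_{(0)}$-eigenvector of eigenvalue $0$. On the other hand, $(Q_t)_{(0)}\tilde A\equiv (Q_t)_{(0)}A=-\sum_\alpha\alpha(x_0)t^{2\alpha(x_0)}\pi_2(x_\alpha)\psi^*_\alpha\pmod{C_{\leq 0}[>0]}$, and this right-hand side already lies in $C_{\leq 0}[>0]$. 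Since $\mi_t$ restricts to a bijection from $C_{\leq 0}[0]$ onto the $\tilde A_{(0)}$-eigenvalue-$0$ subspace of $C_{\leq 0}$, with $\mi_t(v)\equiv v\pmod{C_{\leq 0}[>0]}$, this eigenvalue-$0$ subspace intersects $C_{\leq 0}[>0]$ trivially, forcing $(Q_t)_{(0)}\tilde A=0$. Applying skew-symmetry of the vertex algebra bracket now yields $\tilde A_{(0)}Q_t=-Q_{t,(0)}\tilde A+T(Q_{t,(1)}\tilde A)-\cdots$, reducing the eigenvector property to the vanishing of the higher OPE terms $Q_{t,(j)}\tilde A$ for $j\geq 1$.

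The main obstacle is this last vanishing. For the leading piece $A=\pi_2(x_0)$ it is immediate, since $(x_\alpha|x_0)=0$ implies $Q_t(z)A(w)$ has only a simple pole. For the spectral-sequence correction terms in $\hat A=A+A_1+A_2+\cdots$, one verifies vanishing pole by pole using the conformal-weight structure on the finite-dimensional subcomplexes $C_{\leq 0,\Delta}$ together with the homogeneity of the cochains $A_i$, which restricts the possible OPE singularities. Once this vanishing is established, the two conditions together force $\mi_t(Q_{t=0})=Q_t$ by uniqueness, completing the proof.
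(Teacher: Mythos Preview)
Your argument for $(Q_t)_{(0)}\tilde A=0$ is correct and matches the paper, but you then take a harder road than necessary. You aim for the \emph{state-level} identity $\mi_t(Q_{t=0})=Q_t$, which forces you to prove $\tilde A_{(0)}Q_t=0$ and hence, via skew-symmetry, the vanishing of all higher products $Q_{t,(j)}\tilde A$ for $j\geq 1$. You acknowledge this as ``the main obstacle'' and give only a sketch. That sketch does not close: for instance, under the Hamiltonian $H$ the ghost $\psi^*_\alpha$ with $\alpha\in\Delta_1$ has conformal weight $0$ and cohomological degree $+1$, so the weight-zero part of $C_{\leq 0}$ in degree $+1$ is nontrivial, and a pure conformal-weight argument cannot force $Q_{t,(1)}\tilde A=0$.

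The paper avoids this difficulty entirely by staying at the \emph{operator} level. The proposition only asserts equality of zero modes, and for this one needs only $(Q_t)_{(0)}\tilde A=0$, not $\tilde A_{(0)}Q_t=0$. From $(Q_t)_{(0)}\tilde A=0$ one gets $[(Q_t)_{(0)},\tilde A_{(0)}]=((Q_t)_{(0)}\tilde A)_{(0)}=0$ on all of $C$; likewise $\mi_t(Q_{t=0})_{(0)}\tilde A=\mi_t((Q_{t=0})_{(0)}A)=0$ gives $[\mi_t(Q_{t=0})_{(0)},\tilde A_{(0)}]=0$. Now apply your own eigenvalue trick to the \emph{operator} $\mi_t(Q_{t=0})_{(0)}-(Q_t)_{(0)}$ rather than to the state: for a $\tilde A_{(0)}$-eigenvector $c\in C$ of eigenvalue $\lambda$, the element $(\mi_t(Q_{t=0})_{(0)}-(Q_t)_{(0)})c$ is again a $\tilde A_{(0)}$-eigenvector of eigenvalue $\lambda$, yet it lies in $C[>\lambda]$ because $\mi_t(Q_{t=0})\equiv Q_{t=0}\equiv Q_t\pmod{C[>0]}$, so it vanishes. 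This uses nothing beyond what you already established and sidesteps the higher OPE terms completely.
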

\begin{proof}
We first show that
$\mi_t(Q_{t=0})_{(0)}=( Q_{t})_{(0)}$ on $C_{\leq 0}$.
Since
$\mi_t(Q_{t=0})_{(0)}\tilde{A}=\mi_t(Q_{t=0})_{(0)}\mi_t(A)=
\mi_t((Q_{t=0})_{(0)}A)=0
$,
we have
$[\mi_t(Q_{t=0})_{(0)},\tilde{A}_{(0)}]=0$.
Also,
$[(Q_t)_{(0)},\tilde{A}_{(0)}]=
[(Q_t)_{(0)},\hat{A}_{(0)}]=0$
on $C_{\leq 0}$ by 
Lemma \ref{lem:can replace}.
Let $c\in C_{\leq 0}[\geq \lam]$
be an eigenvector of 
$\tilde{A}_{(0)}$ of eigenvalue $\lam$.
Then
the vector
$(\mi_t(Q_{t=0})_{(0)}-(Q_t)_{(0)})c$ is also an eigenvector
of  $\tilde{A}_{(0)}$ of eigenvalue $\lam$.
On the other hand, 
note that 
$\mi_t(Q_{t=0})\equiv Q_{t=0}\equiv Q_{t=1}\pmod{C_{\leq 0}[\geq \lam]}$.
Hence,
$(\mi_t(Q_{t=0})_{(0)}-(Q_t)_{(0)})c\in C_{\leq 0}[>\lam]$.
Since all the eigenvalues of $\tilde{A}_{(0)}$
on $C_{\leq 0}[>\lam]$ are greater than $\lam$,
we get that $(\mi_t(Q_{t=0})_{(0)}-(Q_t)_{(0)})c=0$. 
Hence, 
$\mi_t(Q_{t=0})_{(0)}=(Q_t)_{(0)}$ on $C_{\leq 0}$.

Next, since 
$(Q_t)_{(0)}\tilde{A}=0$,
we have
$[(Q_t)_{(0)},\tilde{A}_{(0)}]=0$ on the whole space  $C$.
Therefore
we  can repeat the same argument for 
 an eigenvector $c\in C[\geq \lam]$ of 
$\tilde{A}_{(0)}$ of eigenvalue $\lam$
to obtain that 
$(\mi_t(Q_{t=0})_{(0)}-(Q_t)_{(0)})c=0$
for all $c\in C$.

The 
last assertion follows 
since
$\mi_t$ preserves the cohomological gradation
as $\hat{A}$ has cohomological degree zero.

\end{proof}

By Proposition \ref{Pro:Qnew=Qt},
$\mi:=\mi_{t=1}$ defines an isomorphism
\begin{align}
(C,(Q_{t=0})_{(0)})\cong (C,(Q_{t=1})_{(0)})
\end{align}
of differential graded vertex algebras.
We have shown the following assertion.
\begin{Th}
The automorphism
$\mi$ 
induces an isomorphism
$H^{\bullet}(C,(Q_{t=0})_{(0)})\cong H^{\bullet}(C,(Q_{t=1})_{(0)})$.
In particular,
\begin{align*}
\W^k(\g,f)\* \L_{\ell}(\g)\cong H_{DS,f}^0(V^k(\g)\* \L_{\ell}(\g))
\end{align*}
as vertex algebras.
\end{Th}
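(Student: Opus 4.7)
The proof reduces to combining Proposition \ref{Pro:Qnew=Qt}, specialized at $t=1$, with the vanishing Theorem \ref{Th:vanishing}. My plan is first to set $\mi := \mi_{t=1}$; Proposition \ref{Pro:Qnew=Qt} then immediately promotes $\mi$ from an automorphism of the underlying vertex algebra $C$ to an isomorphism of differential graded vertex algebras $(C, (Q_{t=0})_{(0)}) \isomap (C, (Q_{t=1})_{(0)})$. Passing to cohomology will yield the first claim, an isomorphism of graded vertex algebras $H^{\bullet}(C, (Q_{t=0})_{(0)}) \cong H^{\bullet}(C, (Q_{t=1})_{(0)})$; compatibility with the cohomological grading is automatic because $\hat{A}$ was arranged to have cohomological degree zero, so $\mi$ preserves that grading.

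Next I would identify both sides. For $t=0$ the differential $(Q_{t=0})_{(0)}$ decouples the $V^k(\g)$ and $\L_{\ell}(\g)$ tensor factors in $C$, so
\[
H^i(C, (Q_{t=0})_{(0)}) \cong H^i_{DS,f}(V^k(\g)) \* \L_{\ell}(\g).
\]
Since $V^k(\g) \in \on{KL}_k$, Theorem \ref{Th:vanishing} kills the $i\ne 0$ part and yields $\W^k(\g,f) \* \L_{\ell}(\g)$ in degree zero. For $t=1$ the differential $(Q_{t=1})_{(0)}$ is, by construction, the Drinfeld-Sokolov BRST differential for the diagonal $\affg$-action on $V^k(\g) \* \L_{\ell}(\g)$, so
\[
H^i(C, (Q_{t=1})_{(0)}) \cong H^i_{DS,f}(V^k(\g) \* \L_{\ell}(\g)).
\]
Because $V^k(\g) \* \L_{\ell}(\g) \in \on{KL}_{k+\ell}$, the same vanishing theorem applies and leaves only $H^0_{DS,f}(V^k(\g) \* \L_{\ell}(\g))$ in cohomological degree zero. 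Comparing the two sides then gives the asserted vertex algebra isomorphism.

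At this stage the proof is essentially formal; the entire difficulty has already been absorbed by Proposition \ref{Pro:Qnew=Qt}. The main obstacle was thus surmounted upstream, namely the construction of $\mi$ as a vertex algebra automorphism that conjugates $(Q_{t=0})_{(0)}$ into $(Q_{t=1})_{(0)}$ while preserving the cohomological grading. No further technical step is required beyond tracking that $\mi$ respects that grading and invoking the vanishing theorem twice.
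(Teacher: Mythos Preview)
Your proposal is correct and matches the paper's own argument essentially line for line: the paper also sets $\mi:=\mi_{t=1}$, invokes Proposition~\ref{Pro:Qnew=Qt} to obtain the isomorphism of differential graded vertex algebras, and then reads off the cohomology identifications (already recorded earlier in the section) together with the vanishing from Theorem~\ref{Th:vanishing}. Your remark that the grading is preserved because $\hat{A}$ has cohomological degree zero is exactly the observation the paper uses at the end of the proof of Proposition~\ref{Pro:Qnew=Qt}.
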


\begin{proof}[Proof of Theorem \ref{MainTh:iso}
]
Let $V$ be a vertex algebra equipped with a 
vertex algebra homomorphism $V^k(\g)\ra V$.
Since $\L_{\ell}(\g)$ is rational,
both $A_{(0)}$ and $\tilde{A}_{(0)}$
acts semisimply on 
$C(V\* \L_{\ell}(\g))$.
Hence we can apply Lemma \ref{Lem:auto}
to extend $\varphi$ to the isomorphism
$C(V\* \L_{\ell}(\g), (Q_{t=0})_{(0)})\cong C(V\* \L_{\ell}(\g), (Q_{t=t})_{(0)})$
of differential graded vertex algebras,
which gives the 
 the isomorphism
$H_{DS,f}^\bullet(V\* \L_{\ell}(\g))\cong H_{DS,f}^\bullet(V)\* \L_{\ell}(\g)$
of vertex algebras.
Similarly,
if $M$ is a $V$-module and $N$ is an $\L_{\ell}(\g)$-module,
it follows from Lemma \ref{Lem:iso2}
that we have an isomorphism
$H_{DS,f}^\bullet(M\* N)\cong H_{DS,f}^\bullet(M)\* N$
as 
modules over
$H_{DS,f}^\bullet(V\* \L_{\ell}(\g))\cong H_{DS,f}^\bullet(V)\* \L_{\ell}(\g)$.

\end{proof}

\begin{Ex}\label{Ex:sl2}
Let $\g=\mf{sl}_2=\on{span}_{\C}\{e,h,f\}$ and $\ell=1$,
and consider the Drinfeld-Sokolov reduction associated with $f$.
Then $C=V^k(\g)\* L_1(\g)\*
\bigwedge\nolimits^{\infty/2+\bullet}(\g_{>0})$,
and $\bigwedge\nolimits^{\infty/2+\bullet}(\g_{>0})$
is generated by odd fields $\psi(z)=\psi_{\alpha}(z)$, $\psi^*(z)
=\psi_{\alpha}^*(z)$.
We have
$$Q_t(z)=(e_1(z)+t^2 e_2(z))\psi^*(z)+\psi^*(z),$$
where we have set
 $x_i=\pi_i(x)$ for $x\in \g$,
 $i=1,2$.
 The vertex subalgebra $C_{\leq 0}$ is 
 generated by 
 $J^{f_1}(z)=f_1(z)$,
 $J^{h_1}(z)=h_1(z)+2:\!\psi(z)\psi^*(z)\!:$,
 $e_2(z)$, $h_2(z)$, $f_2(z)$,
 and
 $\psi^*(z)$.
 We have
 $A=h_2/2$,
 and
 \begin{align*}
\hat{A}(z)=h_2(z) + t^2 J^{h_1}(z)e_2(z).
\end{align*}
We find that
the isomorphism
 \begin{align*}
\mi_{t}: V^k(\g)\* L_1(\g)\* \bigwedge\nolimits^{\infty/2+\bullet}(\g_{>0})\isomap V^k(\g)\* L_1(\g)
\* \bigwedge\nolimits^{\infty/2+\bullet}(\g_{>0})
\end{align*}
is given by
\begin{align*}
&e_1(z)\mapsto e_1(z)(1-t^2 e_2(z)),
\\
&h_1(z)\mapsto h_1(z)-t^2 k\partial_z e_2(z),\\
&f_1(z)\mapsto f_1(z)(1+t^2 e_2(z)),\\
&e_2(z)\mapsto e_2(z),\\
&h_2(z)\mapsto h_2(z)+t^2J^{h_1}(z)e_2(z),\\
&f_2(z)\mapsto
f_2(z) -\frac{t^2}{2}J^{h_1}(z)h_2(z)
+ \partial_zJ ^{h_1}(z)-\frac{t^4}{4}
:\!e_2(z) J^{h_1}(z)^2\!:,
\\
&\psi(z)\mapsto  \psi(z)(1-t^2 e_2(z)),\\
&\psi^*(z)\mapsto  \psi^*(z)(1+t^2e_2(z)).
\end{align*}
Note that 
we have  $(1-t^2 e_2(z))(1+t^2 e_2(z))=1$  on $\L_1(\g)$.
\end{Ex}

\section{Remarks on superalgebras}\label{sec:super}

We restricted our attention to vertex algebras and here we remark that the results also hold for vertex superalgebras, i.e.
\begin{Rem}
All results of section \ref{section:auto} also hold if we allow $V$ to be a vertex superalgebra, such that $A$ and $\hat{A}$ are even elements of $V$.
\end{Rem}

\begin{Rem}
In the proofs of section \ref{section:proof} we only used the following properties of $\L_{\ell}(\g)$: 
\begin{enumerate}
\item finite dimensionality and boundedness from above of conformal weight spaces with respect to the twisted action
of $(\omega_{\L_{\ell}(\g)})_{(1)}$
corresponding to Li's delta operator
associated with $-x_0$; 
\item semisimple action of $A_0$, where $A:=\pi_2(x_0)\in \L_{\ell}(\g)$.
\end{enumerate}
Thus the results are also true by replacing $\L_{\ell}(\g)$ by any vertex algebra that carries an action of $V^\ell(\g)$ and satisfies these two properties.

We can also allow $\g$ to be a Lie superalgebra and $f$ an even nilpotent element, such that $\L_{\ell}(\g)$ satisfies above two properties.

\end{Rem}
Here are  examples of simple affine vertex superalgebras that satisfy the two properties of the Remark. 

Firstly if we take $\g = \mathfrak{osp}_{1|2}$ and $\ell$ a positive integer $\ell$, then $\L_{\ell}(\g)$ is rational \cite{CFK}. A similar statement is expected to be true only for $\g = \mathfrak{osp}_{1|2n}$.

Secondly, it is well-known \cite{KacWak01} that $\L_1(\mathfrak{sl}_{n|m})$ is a conformal extension of $\L_1(\mathfrak{sl}_n) \otimes \L_{-1}(\mathfrak{sl}_m) \otimes \pi$ with $\pi$ a rank one Heisenberg vertex algebra. Thus if we choose the $\mathfrak{sl}_2$-triple $\{ e, h, f\}$ for the reduction to be a subalgebra of  the $\mathfrak{sl}_n$  subalgebra of $\mathfrak{sl}_{n|m}$, 
\[
\{ e, h, f\} \subset \mathfrak{sl}_n \subset \mathfrak{sl}_{n|m},
\]
 then the two properties of the above remark are satisfied.  Moreover for a positive integer $\ell$ one has the embedding of $\L_\ell(\mathfrak{sl}_n)$ in $\L_1(\mathfrak{sl}_n)^{\otimes \ell}$. It follows that $\L_\ell(\mathfrak{sl}_n)$ embeds into the diagonal affine vertex subalgebra of $\mathfrak{sl}_{n|m}$ of level $\ell$ of $\L_1(\mathfrak{sl}_{n|m})^{\otimes \ell}$ and hence especially $\L_\ell(\mathfrak{sl}_n)$ is a subalgebra of the simple quotient $\L_\ell(\mathfrak{sl}_{n|m})$. As a consequence the two properties of the remark hold for $\L_\ell(\mathfrak{sl}_{n|m})$ if we choose the $\mathfrak{sl}_2$-triple $\{ e, h, f\}$ for the reduction to be a subalgebra of  the $\mathfrak{sl}_n$  subalgebra of $\mathfrak{sl}_{n|m}$.

In general, the notion of integrable highest-weight modules of affine Lie superalgberas is due to Kac and Wakimoto and they are necessarily integrable modules with respect to a certain affine vertex subalgebra, call it $\L_\ell(\mathfrak a)$, \cite[Thm. 6.1 and eqn. 8.2]{KacWak01}, so that if $\{ e, h, f\} \subset \mathfrak{a}$, then our conditions are also satisfied. 

\section{Urod conformal vector}
\label{section:Urod conformal vector}
In this section we assume that 
$k$ is not critical,
and discuss
how the conformal
structure match under the isomorphism
\begin{align*}
\mi:\W^k(\g,f)\* \L_{\ell}(\g)\cong H_{DS,f}^0(V^k(\g)\* \L_{\ell}(\g))
\end{align*}
of vertex algebras.

On the right-hand-side
the conformal vector 
of of $H_{DS,f}^0(V^k(\g)\* \L_{\ell}(\g))$ 
is given by
\begin{align*}
\omega_{total}=\omega_{V^k(\g)}+\omega_{\L_{\ell}(\g)}+
\omega_{F_{\chi}}+\omega_{\bigwedge\nolimits^{\infty/2+\bullet}(\g_{>0})}+T \pi_1(x_0)+T\pi_2(x_0),
\end{align*}
where
$\omega_{V^k(\g)}$ is the Sugawara conformal vector 
of $V^k(\g)$.
It has the  central charge
\begin{align}
\frac{k\dim  \g}{k+h^{\vee}}&
+\frac{\ell\dim  \g}{\ell+h^{\vee}}
-\frac{(k+\ell)\dim  \g}{k+\ell+h^{\vee}}\\
&+
\dim \g_0-\frac{1}{2}\dim \g_{1/2}-12|\frac{\rho}{\sqrt{k
+\ell+h^{\vee}}}-\sqrt{k+\ell+h^{\vee}}x_0|^2.
\nonumber
\end{align}
Clearly,
$\mi^{-1}(\omega_{total})$  is a conformal vector
of $\W^k(\g,f)\* \L_{\ell}(\g)$.
Set
\begin{align}
\omega_{Urod}=\mi^{-1}(\omega^{total})-\omega_{\W^k(\g,f)},
\end{align}
where 
$\omega_{\W^k(\g,f)}$ is the conformal vector of 
$\W^k(\g,f)$.
Since  it commutes with 
$\omega_{\W^k(\g,f)}$, 
$\omega_{Urod}$ defines a conformal vector of 
$\L_{\ell}(\g)$
(\cite[3.11]{LepLi04}),
 which is called 
{\em the Urod conformal vector} of $\L_{\ell}(\g)$.
It depends on the choice of $x_0$ and $k$,
and has the
 central charge
\begin{align}
\label{eq:cc-of-Ugot}
\frac{k\dim  \g}{k+h^{\vee}}&
+\frac{\ell\dim  \g}{\ell+h^{\vee}}
-\frac{(k+\ell)\dim  \g}{k+\ell+h^{\vee}}\\
+
12&\left(|\frac{\rho}{\sqrt{k
+h^{\vee}}}-\sqrt{k+h^{\vee}}x_0|^2
-|\frac{\rho}{\sqrt{k
+\ell+h^{\vee}}}-\sqrt{k+\ell+h^{\vee}}x_0|^2\right).
\nonumber
\end{align}
In the case that $\g$ is simply-laced and $f=f_{prin}$,
the central charge \eqref{eq:cc-of-Ugot} becomes 
\begin{align}
\label{eq:cc-of-principal-Ugot}
-\frac{\ell(\ell h+h^2-1)\dim \g}{\ell+h},
\end{align}
where $h$ is the Coxeter number
and we have used the strange formula
$|\rho|^2/2h^{\vee}=\dim \g/24$,
and so it does not depend on the parameter $k$.

By definition,
the conformal vertex algebra
$\W^k(\g,f)\* \L_{\ell}(\g)$
with the conformal vector 
$\omega_{\W^k(\g,f)}+\omega_{Urod}$
is isomorphic to 
$H_{DS,f}^0(V^k(\g)\* \L_{\ell}(\g))
$
as conformal vertex algebras.

\begin{Lem}\label{Lem:Hamiltonian-of-Urod}
The Hamiltonian of $\L_{\ell}(\g)$ defined by
$\omega_{Urod}$
coincides with 
$$H_{Urod}:=(\omega_{\L_{\ell}(\g)})_{(1)}-(x_0)_{(0)}.$$
\end{Lem}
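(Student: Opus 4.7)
The plan is to reduce the identity $(\omega_{Urod})_{(1)} = H_{Urod}$ on $\L_{\ell}(\g)$ to the Hamiltonian $H$ on the cochain complex $C$ introduced in Section~\ref{section:proof}, via the automorphism $\varphi=\varphi_{t=1}$. I first observe that $H$ coincides with the zero-mode $(\omega_{total})_{(1)}$ on all of $C$: using $(Ta)_{(1)} = -a_{(0)}$, the two translation terms $T\pi_i(x_0)$ in $\omega_{total}$ contribute exactly the $-\pi_i(x_0)_{(0)}$ summands of $H$, while $(\omega_{V^k(\g)})_{(1)}$ agrees with $H_{standard}^{V^k(\g)}$ at non-critical level. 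Restricting $H$ to the subalgebra $\L_\ell(\g)\hookrightarrow C$ embedded as $|0\rangle\otimes\L_\ell(\g)\otimes|0\rangle\otimes|0\rangle$, every term whose mode involves a different tensor factor annihilates the respective vacuum, leaving
\[
H|_{\L_\ell(\g)} = (\omega_{\L_\ell(\g)})_{(1)} - \pi_2(x_0)_{(0)} = H_{Urod}.
\]

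The main step is to show $[H,\varphi]=0$ on $C$. Since $A=\pi_2(x_0)$ is $H$-homogeneous of weight $1$ and $\hat{A}$ is chosen $H$-homogeneous with $\hat{A}\equiv A\pmod{C_{\leq 0}[>0]}$, the element $\hat{A}$ must itself have $H$-weight $1$, so $\hat{A}_{(0)}$ has $H$-weight $0$ and commutes with $H$. Given $v\in C$ simultaneously $A_{(0)}$-homogeneous of eigenvalue $\lam$ and $H$-homogeneous of weight $\Delta$, I would decompose $\tilde{v} := \varphi(v)=\sum_{\Delta'}\tilde{v}_{\Delta'}$ into $H$-weight components. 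Each $\tilde{v}_{\Delta'}$ is an $\hat{A}_{(0)}$-eigenvector of eigenvalue $\lam$ lying in $C[\lam]$ (using also that $A_{(0)}$ commutes with $H$, since $A$ has $H$-weight $1$). For $\Delta'\ne\Delta$, the congruence $\tilde{v}\equiv v\pmod{C[>\lam]}$ forces $\tilde{v}_{\Delta'}\equiv 0\pmod{C[>\lam]}$, and combined with $\tilde{v}_{\Delta'}\in C[\lam]$ this yields $\tilde{v}_{\Delta'}=0$ by the uniqueness clause of the construction of Section~\ref{section:auto}. Hence $\tilde{v}$ is $H$-homogeneous of weight $\Delta$, so $\varphi$ preserves the $H$-grading.

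Finally, by the definition of $\omega_{Urod}$ we have $\omega_{\W^k(\g,f)}+\omega_{Urod}=\varphi^{-1}(\omega_{total})$; applying the general identity $(\varphi^{-1}(a))_{(1)}=\varphi^{-1}\circ a_{(1)}\circ\varphi$, valid for any vertex algebra automorphism, together with $[H,\varphi]=0$, I would conclude
\[
(\omega_{\W^k(\g,f)})_{(1)}+(\omega_{Urod})_{(1)} = \varphi^{-1}H\varphi = H
\]
as operators on $\W^k(\g,f)\otimes\L_\ell(\g)$. Restricting to the subspace $|0\rangle_{\W^k(\g,f)}\otimes\L_\ell(\g)$, the first summand on the left annihilates the vacuum of $\W^k(\g,f)$, while the right-hand side equals $H_{Urod}$ by the first paragraph. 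The hard part will be the homogeneity argument of the second paragraph: one must carefully track the uniqueness clause of Section~\ref{section:auto} through the $H$-weight decomposition, leveraging the compatibility of $H$ with both $A_{(0)}$ and $\hat{A}_{(0)}$; all other steps are essentially bookkeeping.
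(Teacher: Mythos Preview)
Your approach is correct and is essentially the same as the paper's, though far more explicit: the paper's two-sentence proof simply asserts that $\tilde{x}=\varphi(\pi_2(x))$ retains the $H$-weight $1-\deg x$ of $\pi_2(x)$, leaving the key fact $[H,\varphi]=0$ (your Step~3) implicit and checking only on generators rather than proving the operator identity globally as you do.

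One slip to correct in your second paragraph: you assert that each $\tilde{v}_{\Delta'}$ lies in $C[\lam]$, but $\tilde{v}$ itself lies only in $C[\geq\lam]$, so (using $[A_{(0)},H]=0$) its $H$-components $\tilde{v}_{\Delta'}$ lie in $C[\geq\lam]$, not $C[\lam]$. The conclusion is unaffected: for $\Delta'\neq\Delta$ the component $\tilde{v}_{\Delta'}$ is an $\hat{A}_{(0)}$-eigenvector of eigenvalue $\lam$ with $\tilde{v}_{\Delta'}\equiv 0\pmod{C[>\lam]}$, and then the uniqueness clause you already invoke gives $\tilde{v}_{\Delta'}=\widetilde{0}=0$.
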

\begin{proof}
For a homogenous element $x\in \g$,
$\pi_2(x)\in C$ has the conformal weight 
$1-\deg x$.
Hence,
$\tilde{x}$ has the the conformal weight 
$1-\deg x$
as well.
\end{proof}
\begin{Ex}(Continued from Example \ref{Ex:sl2}.)
The Urod conformal vector of $\L_1(\g)$ is given by
\begin{align*}
\omega_{Urod}=\omega_{\L_1(\g)}+T h/2-(k+1)T^2 e/2,
\end{align*}
which agrees with \cite{BerFeiLit16}.
\end{Ex}

\section{Compatibility with twists}\label{Section:twists}
In this section
we show that
 the various {\em twisted} Drinfeld-Sokolov reduction functors
 commute with tensoring integrable representations as well.

For $\cmu \in \check{P}$,
we define a character $\hat{\chi}_\cmu$ of
$\g_{\geq 1}[t,t^{-1}]$ by the formula
\begin{equation}    \label{Psimu}
\hat{\chi}_\cmu(x_\alpha f(t)) = \chi(x_{\alpha}) \cdot \on{Res}_{t=0} f(t)
t^{\bra \cmu,\alpha \ket} dt, \qquad f(t) \in \C[t,t^{-1}].
\end{equation}
Define $$F_{\chi,\cmu}=U(\g_{>0}[t,t^{-1}])\otimes_{U(\g_{>0}[t]+\g_{\geq 1}[t,t^{-1}])}\C_{\hat\chi_{\cmu}},$$
where $\C_{\hat \chi_{\cmu}}$ is the one-dimensional representation of 
$\g_{>0}[t]+\g_{\geq 1}[t,t^{-1}]$
on which $\g_{\geq 1}[t,t^{-1}]$ acts by  the character $\hat \chi_{\cmu}$
and  $\g_{>0}[t]$ acts trivially.
Then $F_{\chi,\cmu}$ is naturally a $V(\g_{>0})$-module and 
we denote by  $\Phi_{\alpha}^{\cmu}(z)$ the image of $x_{\alpha}(z)$ in 
$(\End F_{\chi,\cmu}))[z,z^{-1}]$.
We have
\begin{align*}
&\Phi_{\alpha}^{\cmu}(z)=z^{\bra \cmu,\alpha\ket}\chi(x_{\alpha}) \quad \text{for }\alpha\in \Delta_{>0},\\
&\Phi_{\alpha}^{\cmu}(z)\Phi_{\beta}^{\cmu}(w)\sim \frac{w^{\bra \cmu,\alpha+\beta\ket}
\chi([x_{\alpha},x_\beta])}{z-w.}\end{align*}

For a smooth $\affg$-module $M$ of level $k$,
we define
\begin{align}
H_{DS,f,\cmu}^{\bullet}(M)=H^{\infty/2+\bullet}(\g_{>0}[t,t^{-1}], M\* F_{\chi,\cmu}),
\end{align}
where 
$\g_{>0}[t,t^{-1}]$ acts diagonally on $M\* F_{\chi,\cmu}$.
By definition,
$H_{DS,f,\cmu}^{\bullet}(M)$ is the cohomology
 of the complex
 $(C_{\cmu}(M), (Q_{\cmu})_{(0)})$,
 where 
$C_{\cmu}(M)=M\* F_{\chi,\cmu}\*\bigwedge\nolimits^{\infty/2+\bullet}(\g_{>0})
$
and
$(Q_{\cmu})_{(0)}$ is the zero-mode of the field 
\begin{align*}
Q_{\cmu}(z)=\sum_{\alpha\in \Delta_{>0}}x_\alpha(z)\psi_{\alpha}^*(z)
+\sum_{\alpha\in \Delta_{>0}}\Phi_{\alpha}^{\cmu}(z)\psi_{\alpha}^*(z)
-\frac{1}{2}
\sum_{\alpha,\beta,\gamma\in \Delta_{>0}}c_{\alpha,\beta}^\gamma
\psi_{\alpha}^*(z)\psi_{\beta}^*(z)\psi_{\gamma}(z),
\end{align*}
on $C_{\cmu}(M)$.
 
 We define the structure of a $\W^k(\g,f)$-module on
$H_{DS,f,\check{\mu}}^i(M), i \in \Z$, as follows.
Let
$\Delta(J^{\{\cmu\}},z)$ be Li's delta operator 
 corresponding to the field
\begin{equation}    \label{cmuz}
J^{\{\cmu\}}(z):=\cmu(z) + \sum_{\al \in \Delta_{>0}} \bra \alpha,\cmu \ket :\! \psi_\al(z)
\psi^*_\al(z) \! :
-\frac{1}{2}\sum_{\al \in \Delta_{1/2}}\bra \alpha,\cmu\ket 
 :\! \Phi_\al(z)
\Phi^\al(z) \! :
\end{equation}
in $C(V^k(\g))$, where 
$\Phi^{\alpha}(z)$ is a linear sum of $\Phi^{\beta}(z)$
corresponding to the vector $x^{\alpha}$ dual to $x_{\alpha}$ 
with respect to the symplectic form 
\eqref{eq:symp-form-on},
that is,
$(f|[x_{\alpha},x^{\beta}])=\delta_{\alpha,\beta}$.
We have
\begin{align*}
J^{\{\cmu\}}(z)x_{\alpha}(w)\sim \frac{\bra \alpha,\cmu \ket }{z-w}x_\al(w),
\quad J^{\{\cmu\}}(z)\Phi_{\alpha}(w)\sim \frac{\bra \alpha,\cmu \ket }{z-w}\Phi_\al(w),\\
J^{\{\cmu\}}(z)\psi_{\alpha}(w)\sim \frac{\bra \alpha,\cmu \ket }{z-w}\psi_\al(w),
\quad J^{\{\cmu\}}(w)\psi^*_{\alpha}(z)\sim -\frac{\bra \alpha,\cmu \ket }{z-w}\psi^*_\al(w),
\end{align*}
see \cite{KacRoaWak03}.
For any smooth $\affg$-module $M$,
we can twist the action of 
$C(V^k(\g))$ on 
$C(M)$
by the correspondence
\begin{equation}    \label{DeltaLi}
\sigma_\cmu : Y(A,z)  \mapsto Y_{C(M)}(\Delta(J^{\{\cmu\}},z)A,z).
\end{equation}
for any $A\in C(V^\kappa(\g))$.
Since we have
\begin{align*}
& \sigma_\cmu(x_{\alpha}(z))=z^{- \bra \alpha,\cmu \ket }x_{\alpha}(z),\quad
\sigma_\cmu(\Phi_{\alpha}(z))=z^{- \bra \alpha,\cmu \ket }\Phi_{\alpha}(z),\\
& \sigma_\cmu(\psi_{\alpha}(z))=z^{- \bra \alpha,\cmu \ket }\psi_{\alpha}(z),
\quad
\sigma_\cmu(\psi^*_{\alpha}(z))=z^{\bra \alpha,\cmu \ket }\psi^*_{\alpha}(z),
\end{align*}
it follows that
the resulting
complex $(C(M),\sigma_\cmu(Q_{(0)}))$
is naturally identified with 
 $(C_{\cmu}(M), (Q_{\cmu})_{(0)})$.
Therefore,
 $(C_{\cmu}(M), (Q_{\cmu})_{(0)})$
 has the structure of a differential graded module over the differential vertex algebra
 $(C(V^k(\g)),Q_{(0)}))$,
 and hence,
 each cohomology $H_{DS,f,\check{\mu}}^i(M), i \in \Z$,
 is a module over $\W^k(\g,f)=H_{DS,f}^0(M)$.
 
 The functor $H_{DS,f,\check{\mu}}^0(?)$ was introduced in \cite{AraFre19} 
 in the case that $f$ is a principal nilpotent element.
 
Let 
  $V$ be vertex algebra
  equipped with a vertex algebra homomorphism
  $V^k(\g)\ra V$,
   and let $M$ be a $V$-module.
  Then
the same construction as above gives  $H_{DS,f,\check{\mu}}^i(M)$
a structure of  a module over the vertex algebra 
$H_{DS,f}^0(V)$.

\begin{Th}\label{Th:compatibility-with-twist}
Let $V$ be a quotient of the universal affine vertex algebra $V^k(\g)$,
 $\ell\in \Z_{\geq 0}$,
 and let  $\cmu \in \check{P}$.
For any $V$-module $M$,
$\L_{\ell}(\g)$-module $N$,
and $i\in \Z$,
there is an isomorphism
\begin{align*}
H_{DS,f,\cmu}^i(M\*N)\cong H_{DS,f,\cmu}^i(M)\*  \sigma_\cmu^* N
\end{align*}
as modules over 
$H_{DS,f}^0(V\otimes \L_{\ell}(\g))\cong H_{DS,f}^0(V)\otimes \L_{\ell}(\g)$,
where $\sigma_\cmu^* N$
is the twist of $N$ on which $A\in \L_{\ell}(\g)$ acts as 
$\sigma_{\cmu}(A(z))$.
\end{Th}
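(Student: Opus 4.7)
The strategy is to reduce the twisted statement to the main theorem (Theorem \ref{MainTh:iso}) by expressing the twisted reduction as the ordinary reduction of a suitably twisted module. The key tool is the identification of the twisted complex with the spectral flow of the untwisted one via Li's delta operator, which is already embedded in the paper via the prescription $\sigma_\cmu : Y(A,z)\mapsto Y(\Delta(J^{\{\cmu\}},z)A,z)$ and the observation that $(C_{\cmu}(L),(Q_\cmu)_{(0)})\cong (C(L),\sigma_\cmu(Q_{(0)}))$ for any smooth $\affg$-module $L$. Consequently, for every such $L$,
\begin{align*}
H_{DS,f,\cmu}^\bullet(L)\cong H_{DS,f}^\bullet(\sigma_\cmu^* L),
\end{align*}
where $\sigma_\cmu^* L$ denotes $L$ with $\affg$-action twisted by Li's operator attached to $\cmu(z)$.

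The crucial step is to show that on the diagonal $\affg$-module $M\otimes N$ this twist factorizes as a tensor product. The $\affg$-part of $J^{\{\cmu\}}$, namely $\cmu(z)=\pi_1(\cmu)(z)+\pi_2(\cmu)(z)$, is a sum of two mutually commuting currents supported on disjoint tensor factors; the remaining summands in $J^{\{\cmu\}}$ involve only $F_\chi$ and $\bigwedge^{\infty/2+\bullet}(\g_{>0})$ and are absorbed into the identification $C_\cmu\cong \sigma_\cmu^* C$. Since Li's delta operator is multiplicative over commuting currents, this yields an isomorphism of $\affg$-modules
\begin{align*}
\sigma_\cmu^*(M\otimes N)\cong \sigma_\cmu^* M \otimes \sigma_\cmu^* N,
\end{align*}
where $\sigma_\cmu^* M$ is a $V$-module (since $\cmu\in V^k(\g)\to V$) and $\sigma_\cmu^* N$ is an $\L_\ell(\g)$-module (the hypothesis $\cmu\in \check P$ guarantees integer monodromy so the twist preserves both categories).

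Combining the two steps and applying the module version of Theorem \ref{MainTh:iso} to the pair $(\sigma_\cmu^* M,\sigma_\cmu^* N)$ yields
\begin{align*}
H_{DS,f,\cmu}^i(M\otimes N)
&\cong H_{DS,f}^i(\sigma_\cmu^* M\otimes \sigma_\cmu^* N) \\
&\cong H_{DS,f}^i(\sigma_\cmu^* M)\otimes \sigma_\cmu^* N
\cong H_{DS,f,\cmu}^i(M)\otimes \sigma_\cmu^* N,
\end{align*}
which is exactly the claimed vector space isomorphism. The module structure over $H_{DS,f}^0(V\otimes\L_\ell(\g))\cong H_{DS,f}^0(V)\otimes\L_\ell(\g)$ is inherited functorially: Theorem \ref{MainTh:iso} already provides the module compatibility for the untwisted tensor product $\sigma_\cmu^* M\otimes\sigma_\cmu^* N$, and both sides of the chain of identifications are built from the same complex by natural (co)chain maps, so the compatibility propagates.

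The main obstacle is a careful bookkeeping in the factorization step: one must verify that the identification $(C_{\cmu}(M\otimes N),(Q_\cmu)_{(0)})\cong (C(\sigma_\cmu^* M\otimes \sigma_\cmu^* N),Q_{(0)})$ genuinely splits the Li twist of the diagonal $\affg$-current into its two tensor-factor components without introducing additional cross terms from $F_\chi$ or $\bigwedge^{\infty/2+\bullet}(\g_{>0})$. This is a direct check using the normal-ordered-product form of $J^{\{\cmu\}}(z)$ and the commutativity of the three summands, but it is the step where all the structure of the twisted reduction is being used.
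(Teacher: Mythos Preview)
Your argument is correct, but it takes a more circuitous path than the paper's. You reduce the twisted reduction to the untwisted one via the identification $H_{DS,f,\cmu}^\bullet(L)\cong H_{DS,f}^\bullet(\sigma_\cmu^* L)$, then factor the spectral flow across the tensor product, and finally invoke Theorem~\ref{MainTh:iso}. The step you flag as the ``main obstacle''---absorbing the ghost and $F_\chi$ contributions of $J^{\{\cmu\}}$ into shift automorphisms of $\bigwedge^{\infty/2+\bullet}(\g_{>0})$ and $F_\chi$ so that only the $\affg$-twist survives---is genuine and requires the mode bookkeeping you describe.

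The paper bypasses all of this. It simply observes that $C_\cmu(M\otimes N)$ is a module over $C(V^k(\g)\otimes\L_\ell(\g))$ via the twisted action $\sigma_\cmu$, and that on this module both $A_{(0)}$ and $\tilde A_{(0)}$ act semisimply (rationality of $\L_\ell(\g)$). Lemma~\ref{Lem:iso2} then applies verbatim, producing a linear automorphism of $C_\cmu(M\otimes N)$ intertwining $\sigma_\cmu((Q_{t=0})_{(0)})$ and $\sigma_\cmu((Q_{t=1})_{(0)})$. The former differential computes $H_{DS,f,\cmu}^\bullet(M)\otimes\sigma_\cmu^* N$ (since $Q_{t=0}$ ignores $\pi_2$ and the $\L_\ell(\g)$-action on $N$ is now the $\sigma_\cmu$-twisted one), while the latter is $(Q_\cmu)_{(0)}$ computing $H_{DS,f,\cmu}^\bullet(M\otimes N)$. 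The entire proof is two sentences.

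What your approach buys is a conceptually transparent reduction to the untwisted theorem, at the cost of the compatibility check you identify. What the paper's approach buys is that the automorphism machinery of Section~\ref{section:auto} was designed to apply to \emph{any} module on which $A_{(0)}$ and $\tilde A_{(0)}$ act semisimply---so the twisted case is no harder than the untwisted one, and no unfolding is needed.
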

\begin{proof}
Since $\L_{\ell}(\g)$ is rational,
both $A_{(0)}$ and $\tilde{A}_{(0)}$
acts semisimply on 
$C_{\mu}(M\* N)$ with respect to the twisted action
of $C(V^k(\g)\* \L_{\ell}(\g))$ described above,
where $A$ is defined in  \eqref{eq:A-for-translation}.
Therefore the  assertion follows immediately from Lemma \ref{Lem:iso2}.
\end{proof}

More generally,
let $$w=(y,\cmu)$$ be an element of the extented
affine Weyl group  $\tilde{W}=W\ltimes P^{\vee}$,
where $W$ is the Weyl group of $\g$,
and let 
$\tilde{y}$ be a Tits lifting of $y$ to 
and automorphism of $\g$,
so that $\tilde{y}(x_{\alpha})=c_{\alpha}x_{y(\alpha)}$
for some $c_{\alpha}\in \C^*$.
Then
\begin{align*}
\sigma_w: x_{\alpha}t^n\mapsto \tilde{y}(x_{\alpha})t^{n-\bra \alpha,\cmu\ket}
\end{align*}
defines a Tits lifting of $w$ to an automorphism of $\affg$.
Set
 $$F_{\chi,w}=U(\tilde{w}(\g_{>0}[t,t^{-1}]))\otimes_{U(\tilde{w}(\g_{>0}[t]+\g_{\geq 1}[t,t^{-1}]))}\C_{\hat\chi_{\cmu}},$$
where $\C_{\hat \chi_{\cmu}}$ is the one-dimensional representation of 
$\tilde{w}(\g_{>0}[t]+\g_{\geq 1}[t,t^{-1}])$
on which $\tilde{w}(\g_{\geq 1}[t,t^{-1}])$ 
acts by
the character $\hat \chi_{w}: x_{\alpha}t^n\mapsto ( x_{\alpha}t^n|\tilde{w}(ft^{-1})) $
and  $ \tilde{w}(\g_{>0}[t])$ acts trivially.
Then
\begin{align}
H_{DS,f,w}^{i}(M)=H^{\infty/2+i}(y(\g_{>0})[t,t^{-1}], M\* F_{\chi,w}),
\end{align}
is equipped with a $\W^k(\g,f)$-module structure.
%
%
%

The proof of the following assertion is the same as that of 
Theorem \ref{Th:compatibility-with-twist}.

\begin{Th}\label{Th:compatibility-with-general-twist}
Let $V$ be a quotient of the universal affine vertex algebra $V^k(\g)$,
 $\ell\in \Z_{\geq 0}$,
 and let  $w\in \tilde{W}$.
For any $V$-module $M$,
$\L_{\ell}(\g)$-module $N$,
and $i\in \Z$,
there is an isomorphism
\begin{align*}
H_{DS,f,w}^i(M\*N)\cong H_{DS,f,w}^i(M)\*  \sigma_w^* N
\end{align*}
as modules over 
$H_{DS,f}^0(V\otimes \L_{\ell}(\g))\cong H_{DS,f}^0(V)\otimes \L_{\ell}(\g)$.
\end{Th}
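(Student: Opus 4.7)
The plan is to follow the proof of Theorem \ref{Th:compatibility-with-twist} essentially verbatim, replacing the translation twist $\sigma_{\cmu}$ by the full extended-affine-Weyl twist $\sigma_w$ for $w=(y,\cmu) \in \tilde W = W \ltimes P^{\vee}$. First I would extend the construction \eqref{DeltaLi} to the general case: the translation part $\cmu$ contributes Li's delta operator for the field $J^{\{\cmu\}}(z)$ of \eqref{cmuz}, and the Weyl-group part $y$ is implemented by the Tits lifting $\tilde y$, which permutes the generating fields $x_\alpha(z)$, $\Phi_\alpha(z)$, $\psi_\alpha(z)$, $\psi^*_\alpha(z)$ of $C(V^k(\g))$ according to $\alpha \mapsto y(\alpha)$ up to nonzero scalars $c_\alpha$. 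One checks, exactly as in the paragraph preceding \eqref{DeltaLi}, that the resulting $\sigma_w$-twisted complex $(C(M\otimes N),\,\sigma_w(Q_{(0)}))$ is naturally identified with $(C_w(M\otimes N),\,(Q_w)_{(0)})$, and hence each $H^i_{DS,f,w}(M\otimes N)$ is a module over $H^0_{DS,f}(V \otimes \L_{\ell}(\g)) \cong H^0_{DS,f}(V) \otimes \L_{\ell}(\g)$.

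Next I would verify that the element $A = \pi_2(x_0) \in \L_{\ell}(\g)$ of \eqref{eq:A-for-translation} and its cocycle lift $\hat A$ constructed in Section \ref{section:proof} continue to act semisimply on the twisted complex $C_w(M \otimes N)$. The two conditions highlighted in the Remark of Section \ref{sec:super} are preserved under the twist by $\sigma_w$: rationality of $\L_{\ell}(\g)$ ensures that $\sigma_w^*\L_{\ell}(\g)$ still has finite-dimensional, bounded-above conformal weight spaces for $(\omega_{\L_{\ell}(\g)})_{(1)} - (x_0)_{(0)}$, because a Tits lifting of $w$ preserves $\g[[t]]$ up to finitely many modes and therefore only permutes the finite-dimensional weight spaces; and semisimplicity of $A_{(0)} = \pi_2(x_0)_{(0)}$ is preserved since $y$ fixes the Cartan subalgebra setwise. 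The corresponding statement for $\tilde A_{(0)}$ follows from Lemma \ref{lem:can replace} applied to the $\sigma_w$-twisted $C(V^k(\g) \otimes \L_{\ell}(\g))$-module structure.

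With these two semisimplicity statements in hand, Lemma \ref{Lem:iso2} applied to the automorphism $\varphi = \varphi_{t=1}$ of Section \ref{section:proof} produces, at the level of complexes, an isomorphism
\begin{align*}
C_w(M \otimes N) \isomap C_w(M) \otimes \sigma_w^* N
\end{align*}
that intertwines the differentials $(Q_w)_{(0)}$ on both sides and is equivariant for the $C(V^k(\g)\otimes \L_{\ell}(\g))$-action. Passing to cohomology gives the desired isomorphism of $H^0_{DS,f}(V) \otimes \L_{\ell}(\g)$-modules.

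The only real obstacle is the bookkeeping of the module $\sigma_w^*N$ on the right-hand side: one must spell out that its $\L_{\ell}(\g)$-structure is obtained by pulling back along the composition of the Tits lifting of $y$ with the delta-operator twist by $\cmu$, and verify that this is compatible with the $C(V^k(\g)\otimes \L_{\ell}(\g))$-action used above so that the isomorphism is indeed one of $H^0_{DS,f}(V)\otimes \L_{\ell}(\g)$-modules. Once this identification is recorded the argument reduces, exactly as in Theorem \ref{Th:compatibility-with-twist}, to Lemma \ref{Lem:iso2} together with the rationality of $\L_{\ell}(\g)$, and no new analytic or cohomological input is needed.
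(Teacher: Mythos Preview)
Your proposal is correct and follows essentially the same approach as the paper: the paper's entire proof is the single sentence ``The proof of the following assertion is the same as that of Theorem \ref{Th:compatibility-with-twist},'' and the proof of that theorem reduces immediately to Lemma \ref{Lem:iso2} via the semisimplicity of $A_{(0)}$ and $\tilde A_{(0)}$ guaranteed by the rationality of $\L_{\ell}(\g)$. You have simply unpacked this one-line proof, spelling out how the $\sigma_w$-twist combines the Tits lifting of $y$ with Li's delta operator for $\cmu$, and checking that the two semisimplicity conditions survive the twist; no additional ideas beyond those in the paper are needed.
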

In the above
 $\sigma_w^* N$
is the twist of $N$ on which $A\in \L_{\ell}(\g)$ acts as 
$\sigma_{w}(A(z))$,
which is again an integrable representation of $\affg$ of level $\ell$.

Suppose that the grading 
the \eqref{eq:grading} is even, that is,
$\g_j=0$ unless $j\in \Z$.
Then $x_0\in P^{\vee}$.
For a smooth
$\affg$-module $M$
of level $k$,
^^ ^^ $-$"-Drinfeld-Sokolov reduction
$H_{DS,f,-}^\bullet(M)$ (\cite{FKW92,Ara08-a})
is nothing but the twisted reduction
for $w=(w_0,-x_0)$,
where $w_0$ is the longest element of $W$:
\begin{align}
H_{DS,f,-}^\bullet(M)=H_{DS,f,(w_0,-x_0)}^{\bullet}(M)
\end{align}
\begin{Co}\label{Co:compatibility-minus-reduction}
Let $V$ be a quotient of the universal affine vertex algebra $V^k(\g)$,
 $\ell\in \Z_{\geq 0}$.
 Suppose that the grading 
the \eqref{eq:grading} is even.
For any $V$-module $M$,
$\L_{\ell}(\g)$-module $N$,
and $i\in \Z$,
there is an isomorphism
\begin{align*}
H_{DS,f,-}^i(M\*N)\cong H_{DS,f,-}^i(M)\*  \sigma_{(w_0,-x_0)}^* N
\end{align*}
as modules over 
$H_{DS,f}^0(V\otimes \L_{\ell}(\g))\cong H_{DS,f}^0(V)\otimes \L_{\ell}(\g)$.
\end{Co}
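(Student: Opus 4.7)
The plan is to deduce this directly from Theorem \ref{Th:compatibility-with-general-twist} by specializing to the twist $w=(w_0,-x_0)\in\tilde{W}$ and invoking the identification of the ``$-$''-reduction with this twisted reduction that was just recorded above the statement.

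First I would verify that $w=(w_0,-x_0)$ is a legitimate element of $\tilde{W}=W\ltimes \check{P}$. The hypothesis that the grading \eqref{eq:grading} is even means $x_0\in \check{P}$ (and not merely in $\frac{1}{2}\check{P}$), so $-x_0\in\check{P}$ and $w\in\tilde{W}$. Under this hypothesis the defining complexes match up: the $\g_{\geq 1}[t,t^{-1}]$-character $\hat\chi_w$ attached to $w=(w_0,-x_0)$ is precisely the character used to build the ``$-$''-reduction, and the twisted nilpotent radical $\tilde{y}(\g_{>0}[t,t^{-1}])$ is the one appearing in $H_{DS,f,-}^\bullet$. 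In particular the identity
\[
H_{DS,f,-}^\bullet(M)=H_{DS,f,(w_0,-x_0)}^{\bullet}(M)
\]
stated just before the corollary holds for every smooth $\affg$-module $M$, and is natural in $M$.

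Next, I apply Theorem \ref{Th:compatibility-with-general-twist} to the data $(V,M,N,\ell,w=(w_0,-x_0))$. That theorem produces, for every $i\in\Z$, a functorial isomorphism
\[
H_{DS,f,w}^i(M\otimes N)\cong H_{DS,f,w}^i(M)\otimes \sigma_w^* N,
\]
of modules over $H_{DS,f}^0(V\otimes \L_{\ell}(\g))\cong H_{DS,f}^0(V)\otimes \L_{\ell}(\g)$. Rewriting each occurrence of $H_{DS,f,w}^\bullet$ as $H_{DS,f,-}^\bullet$ via the identification recalled in the previous paragraph gives the desired isomorphism. The integrability of $\sigma_{(w_0,-x_0)}^*N$ at level $\ell$, noted in the preceding theorem, ensures that the right-hand side makes sense in the same category used for the main comparison result.

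There is essentially no new obstacle: the entire content has been packaged into Theorem \ref{Th:compatibility-with-general-twist}, whose proof in turn reduces via Li's delta operator construction to Lemma \ref{Lem:iso2}. The only point requiring a moment of care is the reality check that, for the ``$-$''-reduction as defined in \cite{FKW92,Ara08-a}, the conventions on nilpotent radical and character agree with those produced by the Tits lifting of $(w_0,-x_0)$; granted the evenness hypothesis on the grading, this is exactly the identification stated in the paper immediately before the corollary.
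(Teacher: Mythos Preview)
Your proposal is correct and matches the paper's approach: the corollary is stated without proof precisely because it follows immediately from Theorem~\ref{Th:compatibility-with-general-twist} by specializing to $w=(w_0,-x_0)$ together with the identification $H_{DS,f,-}^\bullet(M)=H_{DS,f,(w_0,-x_0)}^{\bullet}(M)$ recorded just before the statement. Your remark that the evenness hypothesis ensures $x_0\in\check{P}$, so that $(w_0,-x_0)\in\tilde{W}$, is exactly the point that makes the specialization legitimate.
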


\section{Higher rank Urod algebras}\label{Sec:withGKO}

In the case that
$f$ is a principal nilpotent element $f_{prin}$,
we 
denote by $\W^k(\g)$  the principal $W$-algebra
$\W^k(\g,f_{prin})$,
and by $\W_k(\g)$ the unique simple quotient of $\W^k(\g)$.
We have the {\em Feigin-Frenkel duality} (\cite{FF91,AFO}, see also \cite{ACL19})
which states that
\begin{align}
\W^k(\g)\cong \W^{\check{k}}({}^L\g),
\label{eq:FF-duality}
\end{align}
where ${}^L\g$ is the Langlands dual Lie algebra,
$\check{k}$ is defined by the formula $r^{\vee}(k+h^{\vee})(\check{k}+{}^Lh^{\vee})=1$.
Here,
$r^{\vee}$ is the lacety of $\g$ and
${}^Lh^{\vee}$ is the dual Coxeter number of $\g$.

Suppose that  
$\g$ is simply laced and $k+h^{\vee}-1\not\in
\Q_{\leq 0}$.
By \cite{ACL19},
we have
\begin{align*}
\W^{\ell}(\g)\cong (V^{k-1}(\g)\* \L_1(\g))^{\g[t]},
\end{align*}
that is,
$\W^{\ell}(\g)$ is isomorphic to the commutant of 
$V^{k}(\g)$ in $V^{k-1}(\g)\* \L_1(\g)$,
where $V^k(\g)$ is considered as a vertex subalgebra of 
 $V^{k-1}(\g)\* \L_1(\g)$ by the diagonal embedding and
 $\ell$ is defined by \eqref{eq:def-of-ell}.
 In other words,
we have a conformal
vertex algebra embedding
\begin{align}
V^{k}(\g)\* \W^{\ell}(\g)\hookrightarrow V^{k-1}(\g)\* \L_{1}(\g).
\label{eq:GKO}
\end{align}
By Main Theorem \ref{MainTh:iso},
\eqref{eq:GKO} induces the  conformal  vertex algebra homomorphism 
\begin{align}
\W^k(\g,f)\*\W^{\ell}(\g)\ra 
H_{DS,f}^0(V^{k-1}(\g)\* \L_{1}(\g))
\overset{\mi^{-1}}{\isomap} \W^{k-1}(\g,f)\* \mc{U}(\g,f).
\label{eq:Urod-embedding}
\end{align}
which is again embedding
by Theorem \ref{Th:vanishing}.
Here, $\mc{U}(\g,f)$ is the vertex algebra
  $\L_1(\g)$  equipped with the Urod conformal vector $\omega_{Urod}$,
  which we call the {\em Urod algebra}
  associated with $(\g,f)$. 
  We set $$\mc{U}(\g)=\mc{U}(\g,f_{prin}).$$

%
%
 
 \subsection{Generic decompositions}
 In this subsection we assume that
 $k$ is irrational.
 
 For $(\lam,\cmu)\in P_+\times \check{P}_+$,
define 
\begin{align*}
T_{\lam,\cmu}^{k}=H_{DS,f_{prin},\cmu}^0(\We{k}{\lam})\in \W^{k}(\g)\on{-Mod}.
\end{align*}
It was shown in \cite{AraFre19} that the  $\W^k(\g)$-modules
$T_{\lam,\cmu}^k$ are simple, and
the isomorphism
\begin{align}
\label{eq:langland-duality}
T_{\lam,\cmu}^k\cong T_{\cmu,\lam}^{\check{k}}
\end{align}
holds
under the Feigin-Frenkel duality  \eqref{eq:FF-duality}.

 Let $\ell$ be non-negative integer.
Then $\{\Si{\ell}{\lam}\mid \lam \in P^\ell_+\}$
gives  the complete set of isomorphism classes of simple
$\L_{\ell}(\g)$-modules,
where $$ P^\ell_+=\{\lam\in P_+\mid \bra \lam,\theta^{\vee}\ket \leq \ell\}\subset P_+$$ is the set of integrable dominant weight of $\g$ of level $\ell$.
In particular,
 $\{\Si{1}{\lam}\mid \lam \in P^1_+\}$
 gives  the complete set of isomorphism classes
 of simple $\mc{U}(\g,f)$-modules.

Let 
\begin{align}
\We{k}{\mu,f}=H_{DS,f}^0(\We{k}{\mu})\in \W^k(\g,f)\on{-Mod}.
\end{align}

\begin{Th}\label{Th:generic-decom}
Let  $\g$ be simply-laced and
 let
$f$ be any nilpotent element of $\g$.
 For $\lam,\mu\in P_+$ and $\nu\in P_+^1$,
we have
\begin{align*}
\We{k-1}{\mu,f}\* \mathcal{U}(\g,f)\cong \bigoplus_{\substack{\lam\in P_+\\ \lam-\mu-\nu\in Q}}\We{k}{\lam,f}\*  T_{\lam, \mu}^{\ell}
\end{align*}
as  $\W^{k}(\g,f)\*\W^{\ell}(\g)$-modules (see \eqref{eq:Urod-embedding}).
\end{Th}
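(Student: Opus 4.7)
The plan is to transport a branching rule for the GKO-type coset embedding \eqref{eq:GKO} through the Drinfeld--Sokolov reduction, using Main Theorem \ref{MainTh:iso}. First I would invoke (or extract from the coset analysis of \cite{ACL19}) the affine branching rule at irrational $k$:
\[
\We{k-1}{\mu} \otimes \Si{1}{\nu} \;\cong\; \bigoplus_{\substack{\lam \in P_+\\ \lam - \mu - \nu \in Q}} \We{k}{\lam} \otimes T^{\ell}_{\lam,\mu}
\]
as modules over $V^k(\g) \otimes \W^\ell(\g)$, where $V^k(\g)$ sits diagonally inside $V^{k-1}(\g) \otimes \L_1(\g)$ and $\W^\ell(\g)$ is its commutant. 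For irrational $k$ the multiplicity spaces $T^\ell_{\lam,\mu}$ are simple $\W^\ell(\g)$-modules by \cite{AraFre19}, so the identification reduces to matching characters: the coset character of the $\We{k}{\lam}$-isotypic component (obtainable, e.g., from a Wakimoto/Miura free-field realization) matches that of the twisted principal reduction $T^\ell_{\lam,\mu} = H^0_{DS, f_{prin}, \mu}(\We{\ell}{\lam})$, with Feigin--Frenkel duality \eqref{eq:FF-duality} available to align conventions.

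Next, I apply the Drinfeld--Sokolov functor $H^0_{DS,f}$ with respect to the diagonal $V^k(\g)$-action to both sides of the displayed isomorphism. By Theorem \ref{Th:vanishing} this functor is exact on $\on{KL}_k$, so higher cohomology vanishes and the reduction commutes with the direct sum. Since $\W^\ell(\g)$ commutes with the $V^k(\g)$-action being reduced, each multiplicity space $T^\ell_{\lam,\mu}$ passes through untouched, so the RHS becomes
\[
\bigoplus_{\substack{\lam \in P_+\\ \lam - \mu - \nu \in Q}} \We{k}{\lam,f} \otimes T^{\ell}_{\lam,\mu}.
\]
On the LHS, Main Theorem \ref{MainTh:iso} applied to $V = V^{k-1}(\g)$, $M = \We{k-1}{\mu}$, $N = \Si{1}{\nu}$ gives
\[
H^0_{DS,f}\bigl(\We{k-1}{\mu} \otimes \Si{1}{\nu}\bigr) \;\cong\; H^0_{DS,f}(\We{k-1}{\mu}) \otimes \Si{1}{\nu} \;=\; \We{k-1}{\mu,f} \otimes \mc{U}(\g,f),
\]
where the second tensor factor is understood with its Urod conformal structure. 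Equating the two expressions and reading the result via the Urod embedding \eqref{eq:Urod-embedding} yields the claimed isomorphism of $\W^k(\g,f) \otimes \W^\ell(\g)$-modules.

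The main obstacle is Step~1 --- the clean identification of the coset multiplicity spaces with the specific twisted reductions $T^\ell_{\lam,\mu}$. Once this affine-level branching rule is in place, everything else is a formal consequence of exactness (Theorem \ref{Th:vanishing}) together with Main Theorem \ref{MainTh:iso}, and requires only that the $V^k(\g)$ being reduced is the diagonal copy and that $\W^\ell(\g)$ acts by commuting operators. The bookkeeping to verify here is mostly conventional: tracking the highest-weight labels $(\lam,\mu,\nu)$, the selection rule $\lam - \mu - \nu \in Q$ coming from the action of the center, and the Feigin--Frenkel dual level $\ell$ fixed by \eqref{eq:def-of-ell}.
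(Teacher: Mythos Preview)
Your proposal is correct and follows essentially the same approach as the paper: cite the affine branching rule $\We{k-1}{\mu}\otimes\Si{1}{\nu}\cong\bigoplus_{\lam}\We{k}{\lam}\otimes T^{\ell}_{\lam,\mu}$ from \cite[Main Theorem 3]{ACL19}, then apply Main Theorem~\ref{MainTh:iso} (together with the exactness from Theorem~\ref{Th:vanishing}) to both sides. The paper simply cites \cite{ACL19} for Step~1 rather than rederiving the identification of multiplicity spaces, so your ``main obstacle'' is in fact already handled by that reference.
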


In the case $f=f_{prin}$ we have the following more general statement.
\begin{Th}\label{Th:dec:generic-principal}
Let  $\g$ is simply-laced.
For $\lam,\mu,\mu'\in P_+$ and $\nu\in P_+^1$,
we have
\begin{align}
T^{k-1}_{\mu,\mu'}\*  \mc{U}(\g)
=\bigoplus_{\substack{\lam\in P_+\\ \lam-\mu-\mu'-\nu\in Q}}T^k_{\lam,\mu'}\*  T_{\lam, \mu}^{\ell}
\label{eq:twiste-dec}
\end{align}
as  $\W^{k}(\g)\*\W^{\ell}(\g)$-modules.
\end{Th}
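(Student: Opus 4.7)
The plan is to prove Theorem \ref{Th:dec:generic-principal} by applying the twisted principal Drinfeld-Sokolov reduction $H^{0}_{DS,f_{prin},\mu'}$ to a pre-reduction branching of $\V^{k-1}_{\mu}\*\L_{1}(\nu)$ and invoking Theorem \ref{Th:compatibility-with-twist}. This mirrors the way Theorem \ref{Th:generic-decom} is obtained from Theorem \ref{MainTh:iso}, but with the additional twist $\mu'$ accommodated by the more general commutativity statement of Theorem \ref{Th:compatibility-with-twist}.

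First I would record the pre-reduction branching: for $\g$ simply laced, $k$ irrational, $\mu\in P_{+}$ and $\nu\in P^{1}_{+}$, the coset construction of \cite{ACL19} gives
\[
\V^{k-1}_{\mu}\*\L_{1}(\nu)\cong\bigoplus_{\substack{\lam\in P_{+}\\ \lam-\mu-\nu\in Q}}\V^{k}_{\lam}\* T^{\ell}_{\lam,\mu}
\]
as $V^{k}(\g)\*\W^{\ell}(\g)$-modules, with $V^{k}(\g)$ embedded diagonally; this is the untwisted input that produces Theorem \ref{Th:generic-decom} in the principal case via Theorem \ref{MainTh:iso}. Next I would apply $H^{0}_{DS,f_{prin},\mu'}$ with respect to this diagonal $\affg$-action. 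On the left, Theorem \ref{Th:compatibility-with-twist} applied with $V=V^{k-1}(\g)$ and integrable factor $\L_{1}(\g)$ yields
\[
H^{0}_{DS,f_{prin},\mu'}\bigl(\V^{k-1}_{\mu}\*\L_{1}(\nu)\bigr)\cong T^{k-1}_{\mu,\mu'}\*\sigma^{*}_{\mu'}\L_{1}(\nu),
\]
while on the right the diagonal $\affg$ acts trivially on the $T^{\ell}_{\lam,\mu}$ factors (which lie in the commutant $\W^{\ell}(\g)$), so the reduction factors through the first tensorand and produces $\bigoplus T^{k}_{\lam,\mu'}\* T^{\ell}_{\lam,\mu}$. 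Since $\g$ is simply laced one has $\check{P}=P$, so $\mu'$ defines a spectral-flow-type automorphism of $\affg$ that permutes the level-one integrable modules, and $\sigma^{*}_{\mu'}\L_{1}(\nu)\cong\L_{1}(\nu'')$ with $\nu''\in P^{1}_{+}$ the representative of $\nu+\mu'$ modulo $Q$. Re-parameterizing $\nu$ by $\nu''$ turns the constraint $\lam-\mu-\nu\in Q$ into $\lam-\mu-\mu'-\nu\in Q$ and yields the stated decomposition.

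The main obstacle will be the last step, namely the identification of $\sigma^{*}_{\mu'}\L_{1}(\nu)$ compatibly with the Urod conformal vector $\omega_{Urod}$ rather than merely with the Sugawara vector. Since $\omega_{Urod}$ is transported from the total conformal vector via the automorphism $\mi$ of Section \ref{section:auto}, one must verify that the pullback of Li's $\Delta$-operator for $J^{\{\mu'\}}$ under $\mi$ restricts to the expected twist on $\L_{1}(\g)$, so that the identification holds as $\mc{U}(\g)$-modules. A secondary technical point is verifying the pre-reduction branching in Step 1 with coefficient modules precisely $T^{\ell}_{\lam,\mu}$; this should follow from the coset construction of \cite{ACL19} together with the Feigin-Frenkel duality \eqref{eq:langland-duality}, but requires care at generic level.
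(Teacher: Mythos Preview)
Your proposal is essentially the paper's own proof: start from the ACL19 branching of $\V^{k-1}_{\mu}\otimes \Si{1}{\nu}$, apply the $\mu'$-twisted principal reduction, and invoke Theorem~\ref{Th:compatibility-with-twist} so that the left side becomes $T^{k-1}_{\mu,\mu'}\otimes \sigma^{*}_{\mu'}\Si{1}{\nu}$ and the right side becomes $\bigoplus T^{k}_{\lam,\mu'}\otimes T^{\ell}_{\lam,\mu}$; then re-parameterize $\nu$ using the spectral-flow identification of level-one integrable modules. The paper records this identification as $\sigma^{*}_{\mu'}\Si{1}{\nu+\mu'+Q}\cong \Si{1}{\nu+Q}$.

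Two small points. First, your sign in the spectral flow is reversed: with the paper's conventions $\sigma^{*}_{\mu'}\Si{1}{\nu}\cong \Si{1}{\nu-\mu'+Q}$, not $\nu+\mu'+Q$; this is exactly what makes the relabelling produce $\lam-\mu-\mu'-\nu\in Q$ rather than $\lam-\mu+\mu'-\nu\in Q$. Second, the ``main obstacle'' you flag is not one: the assertion of Theorem~\ref{Th:dec:generic-principal} is only about the $\W^{k}(\g)\otimes\W^{\ell}(\g)$-module structure, and Theorem~\ref{Th:compatibility-with-twist} already provides the isomorphism as modules over $H^{0}_{DS,f}(V^{k-1}(\g)\otimes\L_{1}(\g))\cong \W^{k-1}(\g)\otimes\L_{1}(\g)$, through which $\W^{k}(\g)\otimes\W^{\ell}(\g)$ acts. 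No separate compatibility with $\omega_{Urod}$ needs to be checked. The pre-reduction branching with coefficients $T^{\ell}_{\lam,\mu}$ is exactly \cite[Main Theorem~3]{ACL19}, as the paper cites.
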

Note that  Theorem \ref{Th:dec:generic-principal} is compatible with  \eqref{eq:langland-duality}
 since $\check{(k-1)}=\ell-1$.

\begin{proof}[Proof of Theorem \ref{Th:generic-decom} and Theorem \ref{Th:dec:generic-principal}]
Let $\lam,\mu\in P_+$ and $\nu\in P_+^1$,
By \cite[Main Theorem 3]{ACL19},
 we have
\begin{align}
\We{k-1}{\mu}\* \Si{1}{\nu}=\bigoplus_{\substack{\lam\in P_+\\ \lam-\mu-\nu\in Q}} \We{k}{\lam}\*  T^{\ell}_{\lam,\mu}
\end{align}
as  $V^{k+1}(\g)\*\W^{\ell}(\g)$-modules.
Applying Main Theorem \ref{MainTh:iso} to $\We{k-1}{\mu}\* \Si{1}{\nu}$
we obtain  Theorem \ref{Th:generic-decom}. 
Next,  under the identification $P/Q\cong P^1_+$, we have 
 $\sigma_{\mu'}^*\Si{1}{\nu+\mu'+Q}\cong \Si{1}{\nu+Q}$.
 Hence we obtain 
 Theorem \ref{Th:dec:generic-principal}
by applying Theorem \ref{Th:compatibility-with-twist}.
\end{proof}

Let $\pi^k$ be the Heisenberg vertex subalgebra of $V^k(\g)$ generated by $h(z)$, $h\in \h$,
and let $\pi^k_{\lam}$ be the irreducible highest weight representation of $\pi^k$ with highest weight $\lam$.
There is a vertex algebra embedding
\begin{align*}
\W^k(\g)\hookrightarrow \pi^{k+h^\vee}
\end{align*}
called the {\em Miura map} (\cite{FeuFre90}, see also \cite{AraLec}),
and hence each $\pi^{k+h^{\vee}}_{\lam}$ is a $\W^k(\g)$-module.

\begin{Th}
\label{Th:dec-of-free-field}
Let  $\g$ is simply-laced,
 $\mu\in \h^*$ be generic, $\nu\in P^1_+$.
We have the isomorphism
\begin{align*}
\pi_{\mu}^{k-1+h^{\vee}}\* \Si{1}{\nu}\cong \bigoplus_{\lam \in \h^*
\atop \lam-\mu-\nu\in Q} \pi^{k+h^{\vee}}_{\lam}\* \pi^{\ell+h^{\vee}}_{\lam-(\ell+h^{\vee})\mu}
\end{align*}
as $\W^k(\g)\* \W^\ell(\g)$-modules.
\end{Th}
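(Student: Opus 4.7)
The plan is to deduce the free-field decomposition from an analogous identity at the level of Wakimoto modules of the affine algebras $V^{k-1}(\g)$ and $V^k(\g)$, and then to apply the principal Drinfeld-Sokolov reduction termwise, using Main Theorem \ref{MainTh:iso} to commute DS with tensoring by $\L^1_\nu$.

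First, recall the classical fact from the Wakimoto realization (cf.\ \cite{FeuFre90, AraFre19}): for any $\mu \in \h^*$, the Wakimoto module $W^k_\mu$ of $V^k(\g)$ of highest weight $\mu$ at level $k$ satisfies $H^i_{DS, f_{prin}}(W^k_\mu) = \delta_{i,0}\,\pi^{k+h^\vee}_\mu$. The key intermediate step is the free-field analog of the ACL19 coset decomposition:
\begin{align}
W^{k-1}_\mu \otimes \L^1_\nu \cong \bigoplus_{\lam \in \h^*,\ \lam - \mu - \nu \in Q} W^k_\lam \otimes \pi^{\ell+h^\vee}_{\lam - (\ell+h^\vee)\mu}
\label{eq:plan-wak-decomp}
\end{align}
as $V^k(\g) \otimes \W^\ell(\g)$-modules, where $V^k(\g)$ acts diagonally on the left-hand side and $\W^\ell(\g)$ acts on the $\pi^{\ell+h^\vee}$ factors via its Miura embedding. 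The Wakimoto realization writes $W^k_\lam \cong M_\g \otimes \pi^{k+h^\vee}_\lam$ with $M_\g$ the $\beta\gamma$-system on $G/B$ (level-independent), so cancelling the common $M_\g$ reduces \eqref{eq:plan-wak-decomp} to a Fock-module identity between Cartan Heisenberg subalgebras. The lattice VOA decomposition of $\L^1_\nu$ produces the sum over $\lam \in \mu + \nu + Q$, while the Heisenberg weight $\lam - (\ell+h^\vee)\mu$ arises from matching the coset Heisenberg embedding against the Miura Heisenberg of $\W^\ell(\g)$ under the $h^\vee$-shifted normalizations.

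Applying $H^0_{DS, f_{prin}}$ (with respect to the diagonal $V^k(\g)$-action) to both sides of \eqref{eq:plan-wak-decomp}, on the left-hand side Main Theorem \ref{MainTh:iso} together with the Wakimoto calculation gives
\begin{align*}
H^0_{DS, f_{prin}}(W^{k-1}_\mu \otimes \L^1_\nu) \cong \pi^{k-1+h^\vee}_\mu \otimes \L^1_\nu,
\end{align*}
while on the right-hand side each $\pi^{\ell+h^\vee}_{\lam - (\ell+h^\vee)\mu}$ factor is inert under the reduction (it carries only the $\W^\ell(\g)$-action, which commutes with the diagonal $V^k(\g)$-action), yielding $\bigoplus_\lam \pi^{k+h^\vee}_\lam \otimes \pi^{\ell+h^\vee}_{\lam - (\ell+h^\vee)\mu}$. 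Equating the two completed computations proves the theorem.

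The main obstacle is the Wakimoto identity \eqref{eq:plan-wak-decomp}, and specifically pinning down the Heisenberg weight $\lam - (\ell+h^\vee)\mu$: this requires careful bookkeeping of how the naive coset Heisenberg (at level $(k-1)k$) inside $\pi^{k-1+h^\vee} \otimes \pi^1$ is rescaled into the Miura Heisenberg $\pi^{\ell+h^\vee}$ of $\W^\ell(\g)$, together with the background-charge contributions to the conformal vectors. A useful shortcut is to observe that both sides of \eqref{eq:plan-wak-decomp} are semisimple over the Cartan Heisenberg subalgebras for generic $\mu$ and have matching graded characters via the Kac-Wakimoto theta function expansion of the character of $\L^1_\nu$ combined with the coset construction; the decomposition is then forced by uniqueness.
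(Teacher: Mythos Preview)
Your overall strategy---establish a Wakimoto-level decomposition \eqref{eq:plan-wak-decomp} and then apply principal Drinfeld-Sokolov reduction together with Main Theorem \ref{MainTh:iso}---is exactly the paper's approach. However, your justification of \eqref{eq:plan-wak-decomp} via ``cancelling the common $M_{\g}$'' does not go through as stated: the diagonal $\affg$-action on $\mathbb{W}^{k-1}_\mu\otimes\L^1_\nu$ is not the Wakimoto action at level $k$ on some $M_{\g}\otimes\pi$ factor tensored with something inert, because the Wakimoto realization of the $\affg$-currents mixes the $\beta\gamma$-system and the Heisenberg in a level-dependent way, and moreover the second tensor factor $\L^1_\nu$ carries its own $\affg$-action contributing to the diagonal. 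So one cannot simply strip off $M_{\g}$ and reduce to a Fock identity for Heisenberg modules.

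The paper closes this gap differently: for generic $\mu$ the module $\mathbb{W}^{k-1}_\mu$ is an irreducible Verma module, and tensoring an irreducible Verma with the integrable module $\L^1_\nu$ yields a direct sum of irreducible Verma modules (equivalently irreducible Wakimoto modules) for the diagonal $\affg$ at level $k$. The multiplicity space $m^\lam_{\mu,\nu}$ is then identified as a $\W^\ell(\g)$-module by computing the semi-infinite $\n[t,t^{-1}]$-cohomology of $\mathbb{W}^{k-1}_\mu\otimes\L^1_\nu$ and invoking \cite[Proposition 8.3]{ACL19}, which pins down the precise Heisenberg weight. Your ``shortcut'' via semisimplicity over the Cartan Heisenberg plus character matching is in the right spirit, but the semisimplicity one actually needs is over the full diagonal $\affg$, and the identification of each multiplicity as a $\W^\ell(\g)$-module (not merely its graded dimension) requires the cited result from \cite{ACL19} rather than a character argument.
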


For a generic $\lam\in \h^*$,
the $\W^k(\g)$-module $\pi^{k+h^{\vee}}_{\lam}$ is irreducible and  isomorphic to a Verma module 
$\mathbb{M}^k(\chi_{\lam})$ with highest weight
(\cite{FF90,Fre92,Ara07}).
  Here 
 $\chi_{\lam}:\on{Zhu}(\W^k(\g))\ra \C$ is described in \cite[(27)]{ACL19},
 where $\on{Zhu}(\W^k(\g))$ is the Zhu algebra of $\W^k(\g)$.
 Hence the following assertion immediately follows from Theorem \ref{Th:dec-of-free-field}.

\begin{Co}
\label{Th:dec-of-Verma}
Let  $\g$ is simply-laced,
 $\mu\in \h^*$ be generic, $\nu\in P^1_+$.
We have the isomorphism
\begin{align*}
\mathbb{M}^{k-1}(\chi_{\mu})\* \Si{1}{\nu}\cong \bigoplus_{\lam \in \h^*
\atop \lam-\mu-\nu\in Q} \mathbb{M}^k(\chi_{\lam})\*\mathbb{M}^{\ell}(\chi_{\lam-(\ell+h^\vee)\mu})
\end{align*}
as $\W^k(\g)\* \W^\ell(\g)$-modules.
\end{Co}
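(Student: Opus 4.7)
The plan is to derive this corollary directly from Theorem \ref{Th:dec-of-free-field} by invoking the fact that, for generic weight, the Fock module over the Heisenberg algebra $\pi^{k+h^{\vee}}$ carries the structure of a Verma module for $\W^k(\g)$ via the Miura map. Concretely, for generic $\lam \in \h^*$, one has an isomorphism $\pi^{k+h^{\vee}}_{\lam} \cong \mathbb{M}^k(\chi_{\lam})$ of $\W^k(\g)$-modules, by the results of \cite{FF90,Fre92,Ara07} already quoted just before the corollary, with the character $\chi_{\lam}$ on the Zhu algebra as described in \cite[(27)]{ACL19}. So the task is simply to substitute these identifications into the left- and right-hand sides of the decomposition in Theorem \ref{Th:dec-of-free-field}.

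The first step is to apply the identification to the left-hand side: since $\mu$ is generic in $\h^*$ and $k-1$ is not critical, the Miura map identifies $\pi^{k-1+h^{\vee}}_{\mu}$ with $\mathbb{M}^{k-1}(\chi_{\mu})$ as a $\W^{k-1}(\g)$-module, so the left-hand side becomes $\mathbb{M}^{k-1}(\chi_{\mu}) \otimes \L^1_{\nu}$. The second step is to apply it to each summand on the right: for each $\lam$ in the sum, both $\lam$ and $\lam - (\ell + h^{\vee})\mu$ are generic (they differ from $\mu$ and $-(\ell+h^{\vee}-1)\mu$ respectively by an element of $Q + \nu$, and genericity of $\mu$ together with the irrationality of the relevant parameters ensures this), so the Miura identifications
\[
\pi^{k+h^{\vee}}_{\lam} \cong \mathbb{M}^k(\chi_{\lam}), \qquad \pi^{\ell+h^{\vee}}_{\lam - (\ell+h^{\vee})\mu} \cong \mathbb{M}^{\ell}(\chi_{\lam - (\ell+h^{\vee})\mu})
\]
both hold as modules over $\W^k(\g)$ and $\W^\ell(\g)$ respectively, and the tensor product decomposition of Theorem \ref{Th:dec-of-free-field} transports verbatim.

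The only genuine point to verify, rather than the mechanical substitution, is compatibility of the $\W^k(\g) \otimes \W^\ell(\g)$-module structures under the identification. The action of the first factor $\W^k(\g)$ on $\pi^{k+h^{\vee}}_{\lam}$ is the one coming from the Miura embedding $\W^k(\g) \hookrightarrow \pi^{k+h^{\vee}}$, and similarly for $\W^\ell(\g)$ on $\pi^{\ell+h^{\vee}}_{\lam-(\ell+h^{\vee})\mu}$; these are exactly the actions by which the Fock modules become the relevant Verma modules, so the isomorphism is an isomorphism of bimodules. I anticipate no obstacle here since this is precisely the setting in which the Miura identification is stated, and no analytic or convergence issue intervenes because the decomposition of Theorem \ref{Th:dec-of-free-field} is already established.

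The main subtlety — and the only step requiring care — is the bookkeeping around genericity of the summand weights, ensuring that each $\lam-(\ell+h^{\vee})\mu$ remains generic enough so that $\pi^{\ell+h^{\vee}}_{\lam-(\ell+h^{\vee})\mu}$ is irreducible as a $\W^\ell(\g)$-module. Since $k$ is irrational (and hence so is $\ell$, by \eqref{eq:def-of-ell}), and $\mu$ was assumed generic, this holds for all $\lam$ appearing in the sum; after this is checked the corollary reads off immediately.
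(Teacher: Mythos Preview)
Your proposal is correct and follows exactly the paper's approach: the paper simply states that the corollary ``immediately follows from Theorem \ref{Th:dec-of-free-field}'' using the identification $\pi^{k+h^{\vee}}_{\lam}\cong \mathbb{M}^k(\chi_{\lam})$ for generic $\lam$, which is precisely the substitution you carry out. Your additional remarks on genericity of the summand weights and compatibility of module structures are sound elaborations of what the paper leaves implicit.
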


 Let $\mathbb{W}^k_{{\lam}}$ be the Wakimoto module \cite{FeuFre90}
 of $\affg$
 at level $k$ with highest weight $\lam\in \h^*$.
For $\lam=0$,
$\mathbb{W}^k:=\mathbb{W}^k_{0}$ is a vertex algebra,
and we have an embedding
$V^k(\g)\hookrightarrow \mathbb{W}^k$ of vertex algebras (\cite{FeuFre90, Fre05}).
We have
$H_{DS,f_{prin}}^i( \mathbb{W}^k_{\lam})\cong  \delta_{i,0}\pi^{k+h^{\vee}}_{\lam}$,
and the Miura map 
is by definition \cite{FeuFre90} the
map
$\W^k(\g)\ra H_{DS,f_{prin}}^0( \mathbb{W}^k)\cong \pi^{k+h^{\vee}}$
induced by the embedding $V^k(\g)\hookrightarrow \mathbb{W}^k$.

\begin{proof}[Proof of Theorem \ref{Th:dec-of-free-field}]
Since $\nu$ is generic,
$\mathbb{W}^{k-1}_{\mu}\* \Si{1}{\nu}$
is a direct sum of irreducible Verma modules
as a diagonal $\affg$-module,
or equivalently, a direct sum of irreducible Wakimoto modules
$\mathbb{W}^{k}_\lam$.
So we can write
$\mathbb{W}^{k-1}_{\mu}\* \Si{1}{\nu}=\bigoplus_{\lam\in \h^*}\mathbb{W}^{k}_\lam\* m_{\mu,\nu}^\lam$,
where
 $m_{\mu,\nu}^\lam$ be the multiplicity of 
 $\mathbb{W}^{k}_\lam$ in $\mathbb{W}^{k-1}_{\mu}\* \Si{1}{\nu}$.
 Note that 
 $m_{\mu,\nu}^\lam$ 
 is a  $\W^{\ell}(\g)$-module 
by \eqref{eq:GKO}.
We have
\begin{align*}
m_{\mu,\nu}^\lam\cong \Hom_{\pi^{k+1+h^\vee}}(\pi^{k+1+h^{\vee}}_{\lam},
H^{\infty/2+0}(\n[t,t^{-1}],\mathbb{W}^{k-1}_{\mu}\* \Si{1}{\nu})\\
\cong 
\begin{cases}
\pi^{\ell+h^{\vee}}_{\mu-(\ell+h^{\vee})\lam}&\text{if }\lam-\mu-\nu\in Q,\\
0&\text{otherwise, }
\end{cases}
\end{align*}
as $\W^{\ell}(\g)$-modules
by \cite[Proposition 8.3]{ACL19}.
The assertion follows by
 applying Main Theorem \ref{MainTh:iso}
to 
\begin{align}
\mathbb{W}^{k-1}_{\mu}\* \Si{1}{\nu}=\bigoplus\limits_{\lam\in \h^*
\atop \lam-\mu-\nu\in Q}\mathbb{W}^{k}_\lam\* \pi_{\mu-(\ell+h^{\vee})}.
\end{align}
\end{proof}

\subsection{Decomposition at admissible levels}
\label{subsection:Decomposition at admissible levels}
Let  $k$ be an admissible number for $\widehat{\g}$,
that is,
 $\L_k(\g)$ is admissible (\cite{KacWak89}) as a $\widehat{\g}$-module.
 In the case that $\g$ is simply-laced,
 this condition is equivalent to 
\begin{align}
\label{eq:admissibleno}
k+h^{\vee}=\frac{p}{q},\quad p,q\in \Z_{\geq 1}, \ (p,q)=1,\  p\geq h^{\vee}.
\end{align}

For an admissible number $k$,
a simple module over $\L_k(\g)$ 
need not be ordinary, that is, in $\on{KL}$, unless $k$ is a non-negative integer.
The classification of simple highest weight representations of
 $\L_k(\g)$ was given in \cite{A12-2}.
For our purpose, we need only the 
ordinary representations of $\L_k(\g)$.
By \cite{A12-2},
the complete set of isomorphism classes 
of ordinary simple $\L_k(\g)$-modules 
is given by
\begin{align*}
\{\Si{k}{\lam}\mid \lam \in Adm_{\Z}^k\},
\end{align*}
where $Adm_{\Z}^k=\{\lam\in P_+\mid \L^k_{\lam}\text{ is admissible}\}$.
We have
\begin{align}
Adm_{\Z}^k=P^{p-h^{\vee}}_+,
\end{align}
if 
$\g$ is simply-laced and $k$ is of the form  \eqref{eq:admissibleno}.

Let $k$ be an admissible number
and $\lam\in Adm^k_{\Z}$.
By \cite{Ara09b},
the associated variety $X_{\Si{k}{\lam}}$ \cite{Ara12}
of $\Si{k}{\lam}$ is
the closure
of some
 nilpotent orbit $\mathbb{O}_k$
which depends only on the denominator
$q$ of $k$.
More explicitly,
we have 
\begin{align}
X_{\Si{k}{\lam}}
=\{x\in \g\mid (\ad x)^{2q}=0\}
\end{align}
in the case that $\g$ is simply-laced.
By \cite{Ara09b},
we
 have
\begin{align}
H_{DS,f}^0(\Si{k}{\lam})\ne 0
\quad \iff \quad f \in  X_{\Si{k}{\lam}}=\overline{\mathbb{O}}_k.
\end{align}

An admissible number $k$ is called {\em non-degenerate}
if $ X_{\Si{k}{\lam}}$ equals to the nilpotent cone $\mc{N}$
of $\g$
for some
$\lam\in Adm^k_{\Z}$,
or equivalently, for all $Adm^k_{\Z}$.
In the case that $\g$ is simply-laced,
this happens if and only if the denominator $q$ of $k$ is equal or greater than $h^{\vee}$.
In this is the case,
the simple principal $W$-algebra
$\W_k(\g)=\W_k(\g,f_{prin})$ is rational and lisse (\cite{Ara09b,A2012Dec}),
and the complete set of the isomorphism classes of 
$\W_k(\g)$-module
is given by
\begin{align}
\{\mathbf{L}^k_{[\lam,\cmu]}\mid [\lam,\cmu]\in (Adm_{\Z}^k\times Adm_{\Z}^{\check{k}})/\tilde{W}_+\},
\label{eq:simple-modules-of-minimal-model}
\end{align}
where 
\begin{align}
\mathbf{L}^k_{[\lam,\cmu]}:=H_{DS,f_{prin}}^0(\Si{k}{\lam-(k+h^{\vee})\cmu})
\end{align}
and $\tilde{W}_+$ is the subgroup of the extended affine Weyl group of $\g$
consisting of elements of length zero that acts on the set 
$Adm_{\Z}^k\times Adm_{\Z}^{\check{k}}$ diagonally.
We have 
\begin{align}
\mathbf{L}^k_{[\lam,\cmu]}\cong \mathbf{L}^{\check{k}}_{[\cmu,\lam]}
\end{align}
under the Feigin-Frenkel duality.

The following assertion is new for nonzero $\cmu$.
\begin{Th}
Let $k$ be a non-degenerate admissible number,
and let $\lam\in Adm_{\Z}^k$,
$\cmu\in Adm_{\Z}^{\check{k}}$.
We have
$$H_{DS,f_{prin},\cmu}^i(\Si{k}{\lam})\cong \begin{cases}
\mathbf{L}^k_{[\lam,\cmu]}&\text{for }i=0,\\
0&\text{for }i\ne 0
\end{cases}$$
as $\W^k(\g)$-modules.
\end{Th}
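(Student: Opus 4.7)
The plan is to reduce the twisted Drinfeld--Sokolov reduction of the simple admissible module $\Si{k}{\lam}$ to an untwisted Drinfeld--Sokolov reduction applied to a shifted simple $\affg$-module, and then invoke the known vanishing and identification results recalled in Section~\ref{subsection:Decomposition at admissible levels}.

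First, I would use the interpretation of the twisted reduction given in Section~\ref{Section:twists}: the complex $(C_\cmu(M),(Q_\cmu)_{(0)})$ is obtained from the untwisted complex $(C(M),Q_{(0)})$ by twisting the action of $C(V^k(\g))$ via Li's delta operator $\Delta(J^{\{\cmu\}},z)$, which at the level of $\affg$ is exactly the Tits lifting $\sigma_\cmu$ of the translation $t_\cmu\in\tilde W$. Consequently, for any smooth $\affg$-module $M$ of level $k$,
\begin{align*}
H_{DS,f_{prin},\cmu}^{\bullet}(M)\cong H_{DS,f_{prin}}^{\bullet}(\sigma_\cmu^{*}M)
\end{align*}
as $\W^k(\g)$-modules, where $\sigma_\cmu^{*}M$ denotes $M$ with the $\affg$-action precomposed with $\sigma_\cmu$.

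Next, I would identify $\sigma_\cmu^{*}\Si{k}{\lam}$ with the simple highest weight $\affg$-module $\Si{k}{\lam-(k+h^\vee)\cmu}$. The translation $t_\cmu$ acts on level-$k$ weights of $\affg$ by $\Lambda\mapsto\Lambda+(k+h^\vee)\cmu$, and the Tits lifting conjugates the standard Borel subalgebra to a new one; since the pullback of a simple module is simple, one checks that the highest weight of $\sigma_\cmu^{*}\Si{k}{\lam}$ with respect to the \emph{original} triangular decomposition is $\lam-(k+h^\vee)\cmu$. This is the module-theoretic counterpart of the action of $\tilde W$ on admissible weights in \cite{KacWak89}. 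Finally, for a non-degenerate admissible $k$ the associated variety $X_{\Si{k}{\mu}}$ equals the whole nilpotent cone $\mc N$ for every admissible $\mu$, so in particular $f_{prin}\in X_{\Si{k}{\lam-(k+h^\vee)\cmu}}$. By \cite{Ara09b} (recalled right after Theorem~\ref{Th:vanishing}) this yields
$H_{DS,f_{prin}}^i(\Si{k}{\lam-(k+h^\vee)\cmu})=0$ for $i\neq 0$,
while by the very definition of $\mathbf{L}^k_{[\lam,\cmu]}$ the degree-zero cohomology equals $\mathbf{L}^k_{[\lam,\cmu]}$, so the theorem follows.

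The hard part will be the second step: verifying carefully that $\sigma_\cmu^{*}\Si{k}{\lam}\cong\Si{k}{\lam-(k+h^\vee)\cmu}$ as $\affg$-modules. One must track precisely how the Tits lifting acts on the affine Cartan subalgebra (including the scaling element), how it conjugates the positive nilpotent subalgebra, and confirm that the pullback is still a highest weight module for the original Borel with the asserted highest weight. This is standard admissible representation machinery, but the signs and shifts must be matched to the conventions used in the twisted reduction complex of Section~\ref{Section:twists}. Once this bookkeeping is in place, the remainder of the argument is a routine application of Theorem~\ref{Th:vanishing} and the results of \cite{Ara09b,A2012Dec}.
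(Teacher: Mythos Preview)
Your reduction to the untwisted functor does not go through. The Li $\Delta$-operator in Section~\ref{Section:twists} twists the action of the \emph{entire} vertex algebra $C(V^k(\g))$ on $C(M)$---the ghost fields $\psi_\alpha,\psi^*_\alpha$ are twisted along with the affine currents. Thus the $C(V^k(\g))$-module underlying $(C(M),\sigma_\cmu(Q_{(0)}))$ is $\sigma_\cmu^*M\otimes\sigma_\cmu^*\bw{\infty/2+\bullet}(\g_{>0})$, not $\sigma_\cmu^*M\otimes\bw{\infty/2+\bullet}(\g_{>0})=C(\sigma_\cmu^*M)$. An isomorphism $\sigma_\cmu^*\bw{\infty/2+\bullet}(\g_{>0})\cong\bw{\infty/2+\bullet}(\g_{>0})$ of ghost-modules exists, but it shifts the ghost-number (cohomological) grading by $2\bra\rho,\cmu\ket$; so at best you would get $H^i_{DS,f_{prin},\cmu}(M)\cong H^{i\pm 2\bra\rho,\cmu\ket}_{DS,f_{prin}}(\sigma_\cmu^*M)$, and the concentration in degree $0$ would fail for $\cmu\ne 0$.

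There is an independent gap in your second and third steps. The spectral flow $\sigma_\cmu^*\Si{k}{\lam}$ is irreducible, but for $\cmu\ne 0$ it leaves $\on{KL}_k$, and whether it is a highest weight module for the \emph{original} Borel---let alone isomorphic to $\Si{k}{\lam-(k+h^\vee)\cmu}$---is a substantive structural statement about admissible modules, not bookkeeping. Even granting that identification, $\Si{k}{\lam-(k+h^\vee)\cmu}\notin\on{KL}_k$, so neither Theorem~\ref{Th:vanishing} nor the passage you cite after it applies; the vanishing of the ``$+$''-reduction in nonzero degrees for such modules is precisely what is not available (the category~$\mathcal O$ vanishing of \cite{Ara07} is for the ``$-$'' reduction).

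The paper avoids all of this by working directly with the twisted functor. It first uses $H^{i\ne 0}_{DS,f_{prin},\cmu}(\We{k}{\lam})=0$ from \cite[Theorem~2.1]{AraFre19} to obtain $H^{i>0}_{DS,f_{prin},\cmu}(M)=0$ for $M\in\on{KL}$. It then applies $H^0_{DS,f_{prin},\cmu}(?)$ to the two-sided Wakimoto resolution of $\Si{k}{\lam}$ from \cite{A-BGG}, uses $H^0_{DS,f_{prin},\cmu}(\mathbb W^k_\nu)\cong\pi^k_{\nu-(k+h^\vee)\cmu}$ from \cite[Lemma~3.2]{AraFre19}, and exploits the rationality of $\W_k(\g)$ to constrain which simple modules can occur; an Euler-character comparison finishes the identification.
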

\begin{proof}
The case
$\cmu=0$ has been proved in \cite{Ara04, Ara07}.
In particular,
\begin{align}
H_{DS,f_{prin}}^0(\L_k(\g))\cong \W_k(\g)
\label{eq:the-image-of-simple-is-simple}.
\end{align}

By \cite[Theorem 2.1]{AraFre19},
we have $H_{DS,f_{prin},\cmu}^{i}(\We{k}{\lam})=0$ for all $i\ne 0$, $\lam\in P_+$.
It follows that $H_{DS,f_{prin},\cmu}^{i\ne 0}(M)=0$, $i\ne 0$, 
for any object $M$ in $\on{KL}$ that admits a Weyl flag.
This implies that
\begin{align}
H_{DS,f_{prin},\cmu}^{i}(M)=0\quad \text{for  }i>0,\ M\in \on{KL},
\label{eq:vanishing-ot-twisted-cohomology}
\end{align}
see the proof of \cite[Theorem 8.3]{Ara04}.

By \cite{A-BGG},
the admissible representation $\Si{k}{\lam}$
admits a two-sided resolution
\begin{align}
\label{eq:BGG}
\dots C^{-1}\ra C^0\ra C^1\ra \dots
\end{align}
of the form $C^i=\bigoplus\limits_{w\in \widehat{W}(\lam+k\Lam_0)\atop
\ell^{\infty/2}(w)=i}\mathbb{W}^k_{w\circ {\lam}}$,
where 
$\widehat{W}(\lam+k\Lam_0)$ is the integral Weyl group of $\lam+k\Lam_0$
and $\ell^{\infty/2}(w)$ is the semi-infinite length of $w$.
By \cite[Lemma 3.2]{AraFre19},
we have
$$H_{DS,f_{prin},\cmu}^0(\mathbb{W}^k_{{\lam}})\cong \begin{cases}
\pi_{\lam-(k+h^{\vee})\cmu}^k&\text{for }i=0,\\
0&\text{for }i\ne 0
\end{cases}$$
as $\W^k(\g)$-modules.
It follows  that $H_{DS,f_{prin},\cmu}^i(\Si{k}{\lam})$ is
 the $i$-th cohomology of the complex obtained by applying the functor 
 $H_{DS,f_{prin},\cmu}^0(?)$ to the resolution  \eqref{eq:BGG}.
In particular,
$H_{DS,f_{prin},\cmu}^i(\Si{k}{\lam})$ is a subquotient
of  the $\W^k(\g)$-module
$$H_{DS,f_{prin},\cmu}^0(C^i)\cong \bigoplus\limits_{w\in \widehat{W}(\lam+k\Lam_0)\atop
\ell^{\infty/2}(w)=i}\pi^k_{w\circ \lam-(k+h^{\vee})\cmu}.$$
On the other hand,
since
$\Si{k}{\lam}$ is a $\L_k(\g)$-module,
each $H_{DS,f_{prin},\cmu}^i(\Si{k}{\lam})$ is a module over the
simple $W$-algebra  $\W_k(\g)=H_{DS,f_{prin}}^0(\L_k(\g))$.
Since $\W_k(\g)$ is rational,
$H_{DS,f_{prin},\cmu}^i(\Si{k}{\lam})$
is  a direct sum of simple modules of the form 
\eqref{eq:simple-modules-of-minimal-model}.
However,
such a module
appears 
in the local composition factor
of  $\pi^k_{w\circ \lam-(k+h^{\vee})\cmu}$
if and only if $w\in W$
(\cite{Ara07}),
where $W\subset \widehat{W}(k\Lam_0)$ is the Weyl group of $\g$.
As 
\begin{align*}
\text{$\ell^{\infty/2}(w)\geq 0$ for  $w\in W$ and the equality holds if and only if $w=1$,}
\end{align*}
$H_{DS,f_{prin},\cmu}^i(\Si{k}{\lam})$ must vanish for  $i< 0$.
Together with \eqref{eq:vanishing-ot-twisted-cohomology},
we get 
$H_{DS,f_{prin},\cmu}^i(\Si{k}{\lam})=0$  for $i\ne 0$.
Finally, since
$\mathbf{L}^k_{[\lam,\cmu]}$ is the unique simple $\W_k(\g)$-module 
that appears in the local composition factor
of  $\pi^k_{\lam-(k+h^{\vee})\cmu}$
and it appears with multiplicity one (\cite{Ara07}),
$H_{DS,f_{prin},\cmu}^i(\Si{k}{\lam})$ is either zero or isomorphic to $\mathbf{L}^k_{[\lam,\cmu]}$.
The assertion follows since
 the Euler character  
of  $H_{DS,f_{prin}}^\bullet(\L_k(\g))$ is equal to
the character of $\mathbf{L}^k_{[\lam,\cmu]}$.
\end{proof}


For an admissible number $k$ and $\lam\in Adm^k_{\Z}$,
define
\begin{align}
\Si{k}{\lam,f}=H_{DS,f}^0(\Si{k}{\lam})\in \W^k(\g,f)\on{-Mod}.
\end{align}
Note that $\Si{k}{\lam,f_{prin}}=\mathbf{L}_{[\lam,0]}^k\cong \mathbf{L}_{[0,\lam]}^{\check{k}}$.

Let $\g$ be simply-laced.
Observe that
if $k-1$ is an admissible number,
then so is $k$,
and $\ell$ is a non-degenerate admissible number.
Moreover, we have
\begin{align*}
Adm_{\Z}^{k-1}=Adm_{\Z}^{\check{\ell}}
=P^{p-h^\vee}_+,\quad
Adm_{\Z}^k=Adm_{\Z}^{\ell}=P^{p+q-h^{\vee}}_+
\end{align*}
if $k-h^{\vee}+1=p/q$ with $p\geq h^{\vee}$, $q\geq 1$, $(p,q)=1$.

\begin{Th}\label{Th:decom-adm}
Let $\g$ be simply-laced,
and let  $k-1
$  
be admissible.
Suppose that $f\in X_{\L_{k}(\g)}=X_{\L_{k-1}(\g)}$.
For $\mu\in Adm^{k-1}_{\Z}$,
$\nu\in P_+^1$,
we have
\begin{align*}
\Si{k-1}{\mu,f}\otimes \Si{1}{\nu}\cong \bigoplus_{\lam\in Adm^{k}_{\Z}\atop \lam-\mu-\nu\in Q}
\Si{k}{\lam,f}\otimes  \mathbf{L}_{[\lam,\mu]}^{\ell}
\end{align*}
as $\W^{k}(\g,f)\* \W^{\ell}(\g)$-modules.
\end{Th}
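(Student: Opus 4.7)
The plan is to deduce Theorem \ref{Th:decom-adm} as the admissible-level specialization of Theorem \ref{Th:generic-decom}, by first establishing a decomposition at the level of $\affg$-modules and then applying the Drinfeld-Sokolov functor. The first and main step is to establish the admissible coset decomposition
\[
\Si{k-1}{\mu}\otimes \Si{1}{\nu}\cong \bigoplus_{\lam\in Adm^k_\Z,\,\lam-\mu-\nu\in Q}\Si{k}{\lam}\otimes \mathbf{L}_{[\lam,\mu]}^{\ell}
\]
as modules over $\L_k(\g)\otimes \W_\ell(\g)\subset \L_{k-1}(\g)\otimes \L_1(\g)$. This is the admissible analog of \cite[Main Theorem 3]{ACL19}, and I would derive it by combining the generic decomposition (Theorem \ref{Th:generic-decom} and its precursor of \cite{ACL19}) with the rationality and lisseness of $\W_\ell(\g)$ (granted because the denominator of $\ell$ is $p\geq h^\vee$, so $\ell$ is non-degenerate admissible), the semisimple action of $\L_k(\g)$ on the tensor product of an admissible and an integrable module, and a character/composition-factor identification to rule out spurious summands.

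With this affine decomposition in hand, I apply $H_{DS,f}^0$ to both sides. For the left-hand side, Main Theorem \ref{MainTh:iso} together with the vanishing of higher reductions on $\on{KL}$ modules (Theorem \ref{Th:vanishing}) gives
\[
H_{DS,f}^0\bigl(\Si{k-1}{\mu}\otimes \Si{1}{\nu}\bigr)\cong \Si{k-1}{\mu,f}\otimes \Si{1}{\nu}.
\]
For each summand on the right-hand side, the affine Kac-Moody action is carried entirely by the first tensor factor since $\W_\ell(\g)$ lies in the commutant of $V^k(\g)$ inside $V^{k-1}(\g)\otimes \L_1(\g)$; hence the BRST differential touches only $\Si{k}{\lam}$, and
\[
H_{DS,f}^\bullet\bigl(\Si{k}{\lam}\otimes \mathbf{L}_{[\lam,\mu]}^{\ell}\bigr)\cong H_{DS,f}^\bullet(\Si{k}{\lam})\otimes \mathbf{L}_{[\lam,\mu]}^{\ell}.
\]
A second application of Theorem \ref{Th:vanishing}, using $Adm^k_\Z\subset P_+$, collapses this to degree zero and yields $\Si{k}{\lam,f}\otimes \mathbf{L}_{[\lam,\mu]}^{\ell}$.

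The hypothesis $f\in X_{\L_k(\g)}=X_{\L_{k-1}(\g)}$ ensures via the non-vanishing criterion recalled just before the theorem that $\Si{k-1}{\mu,f}$ and every $\Si{k}{\lam,f}$ with $\lam\in Adm^k_\Z$ is nonzero, since $X_{\Si{k}{\lam}}=\overline{\mathbb O}_k$ is the same closed nilpotent orbit for all such $\lam$. The principal obstacle is the first step: upgrading the generic decomposition of Theorem \ref{Th:generic-decom} to the admissible regime requires showing both that the sum is indexed precisely by $\lam\in Adm^k_\Z$ and that the multiplicity space attached to $\lam$ is the simple $\W_\ell(\g)$-module $\mathbf{L}_{[\lam,\mu]}^{\ell}$ rather than a larger admissible highest-weight object. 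It is at this point that the rationality of $\W_\ell(\g)$ is used crucially, together with the classification \eqref{eq:simple-modules-of-minimal-model} of its simple modules.
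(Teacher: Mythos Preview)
Your approach is essentially the same as the paper's. The only difference is that the admissible-level affine decomposition you set out to derive in your first step is in fact already contained in \cite[Main Theorem 3]{ACL19}: that result covers both the generic Weyl-module case (used in the proof of Theorem \ref{Th:generic-decom}) and the admissible simple-module case needed here, so no further argument with rationality of $\W_\ell(\g)$ or character comparisons is required. Once this is cited, the rest of your argument---applying Main Theorem \ref{MainTh:iso} to the left-hand side and observing that on the right-hand side the BRST differential touches only the affine factor---matches the paper's one-line deduction.
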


In the case $f=f_{prin}$ we have the following more general statement.
\begin{Th}\label{Th:dec-admi-pri}
Let $\g$ be simply-laced,
and
let $k-1
$  be non-degenerate admissible.
For  $\mu\in Adm^{k-1}_{\Z}$,
$\mu'\in Adm^{\check{k-1}}_{\Z}=Adm^{\check{k}}_{\Z}$,
$\nu\in P_+^1$,
we have
\begin{align*}
\mathbf{L}_{[\mu,\mu']}^{k-1}\otimes  \Si{1}{\nu}\cong \bigoplus_{\lam\in Adm^k_{\Z}\atop \lam-\mu-\mu'-\nu\in Q}
\mathbf{L}_{[\lam,\mu']}^{k}\otimes  \mathbf{L}_{[\lam,\mu]}^{\ell}
\end{align*}
as $\W_{k}(\g)\* \W_{\ell}(\g)$-modules.
\end{Th}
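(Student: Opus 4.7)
My plan is to mirror the passage from Theorem~\ref{Th:generic-decom} to Theorem~\ref{Th:dec:generic-principal}, but starting from the admissible-level coset decomposition underlying Theorem~\ref{Th:decom-adm} rather than its generic counterpart. The first step is to establish, as $V^k(\g)\otimes \W^\ell(\g)$-modules, the admissible-level branching
\begin{align*}
\Si{k-1}{\mu}\otimes \Si{1}{\nu}\;\cong\; \bigoplus_{\substack{\lam\in Adm_{\Z}^k\\ \lam-\mu-\nu\in Q}}\Si{k}{\lam}\otimes \mathbf{L}^\ell_{[\lam,\mu]},
\end{align*}
which should follow from the conformal embedding \eqref{eq:GKO}, the classification of simple ordinary $\L_{k}(\g)$-modules, and the rationality and classification of simple $\W_\ell(\g)$-modules at non-degenerate admissible level $\ell$. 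Indeed, Theorem~\ref{Th:decom-adm} itself is obtained by applying $H^0_{DS,f}$ to such a decomposition together with Main Theorem~\ref{MainTh:iso}.

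Next, I apply the twisted Drinfeld--Sokolov reduction $H^0_{DS,f_{prin},\mu'}$ to both sides. On the right, the twist acts only on the first tensor factor since $\W^\ell(\g)$ is the commutant of $V^k(\g)$ inside $V^{k-1}(\g)\otimes \L_1(\g)$; invoking the preceding theorem computing $H^0_{DS,f_{prin},\mu'}(\Si{k}{\lam})\cong \mathbf{L}^k_{[\lam,\mu']}$, the right-hand side becomes
\begin{align*}
\bigoplus_{\substack{\lam\in Adm_\Z^k\\ \lam-\mu-\nu\in Q}} \mathbf{L}^k_{[\lam,\mu']}\otimes \mathbf{L}^\ell_{[\lam,\mu]}.
\end{align*}
On the left, Theorem~\ref{Th:compatibility-with-twist} yields
\begin{align*}
H^0_{DS,f_{prin},\mu'}(\Si{k-1}{\mu}\otimes \Si{1}{\nu})\;\cong\; \mathbf{L}^{k-1}_{[\mu,\mu']}\otimes \sigma_{\mu'}^*\Si{1}{\nu},
\end{align*}
and under $P/Q\cong P_+^1$ one has $\sigma_{\mu'}^*\Si{1}{\nu+\mu'+Q}\cong \Si{1}{\nu+Q}$, exactly as used in the proof of Theorem~\ref{Th:dec:generic-principal}. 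Relabeling $\nu$ so as to absorb the shift by $\mu'$ converts the congruence $\lam-\mu-\nu\in Q$ into $\lam-\mu-\mu'-\nu\in Q$, producing the claimed decomposition.

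The main technical obstacle is the starting admissible-level branching of $\Si{k-1}{\mu}\otimes \Si{1}{\nu}$; once this is in hand, the reduction to $\W_k(\g)\otimes \W_\ell(\g)$-modules and the incorporation of the extra label $\mu'$ are formal consequences of the twisted compatibility established earlier. A secondary point to check is that each summand on both sides is an honest simple $\W_k(\g)\otimes \W_\ell(\g)$-module, which uses the rationality of $\W_k(\g)$ and $\W_\ell(\g)$ at non-degenerate admissible levels together with the identification of twisted reductions with the modules $\mathbf{L}^k_{[\lam,\mu']}$ and $\mathbf{L}^\ell_{[\lam,\mu]}$ from the classification \eqref{eq:simple-modules-of-minimal-model}.
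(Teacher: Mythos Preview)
Your proposal is correct and follows essentially the same route as the paper: start from the admissible-level branching of $\Si{k-1}{\mu}\otimes \Si{1}{\nu}$ as $\L_k(\g)\otimes \W_\ell(\g)$-modules, then apply the twisted reduction $H^0_{DS,f_{prin},\mu'}$ using Theorem~\ref{Th:compatibility-with-twist} and the identification $H^0_{DS,f_{prin},\mu'}(\Si{k}{\lam})\cong \mathbf{L}^k_{[\lam,\mu']}$ from the preceding theorem, together with the spectral-flow shift $\sigma_{\mu'}^*\Si{1}{\nu+\mu'+Q}\cong \Si{1}{\nu+Q}$. The one place where you hedge---the starting branching---is exactly what the paper imports from \cite[Main Theorem 3]{ACL19}, so you need not rederive it from the embedding and classifications; just cite it.
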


\begin{Co}\label{Co:Urod-decomposition}
Let $\g$ be simply-laced.
Let $k+h^{\vee}=(2h^{\vee}+1)/h^{\vee}$,
so that $\ell+h^{\vee}=(2h^{\vee}+1)/(h^{\vee}+1)$.
\begin{enumerate}
\item There is an embedding of  vertex algebras
\begin{align*}
\W_k(\g)\* \W_{\ell}(\g)\hookrightarrow \L_1(\g),
\end{align*}
and $\W_k(\g)$ and $ \W_{\ell}(\g)$ form a dual pair in $ \L_1(\g)$.
\item For $\nu\in P^1_+$ we have
\begin{align*}
 \Si{1}{\nu}\cong \bigoplus_{\lam\in Adm_{\Z}^k\cap (\nu+Q)}
\mathbf{L}_{[\lam,0]}^{k}\otimes  \mathbf{L}_{[\lam,0]}^{\ell}
\end{align*}
as $\W_k(\g)\* \W_{\ell}(\g)$-modules.
\end{enumerate}
\end{Co}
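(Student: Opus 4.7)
The plan is to derive Corollary \ref{Co:Urod-decomposition} as a direct specialization of Theorem \ref{Th:dec-admi-pri} at the level specified by $k+h^{\vee}=(2h^{\vee}+1)/h^{\vee}$. First I would verify the admissibility bookkeeping: one has $(k-1)+h^{\vee}=(h^{\vee}+1)/h^{\vee}$, so the denominator $q=h^{\vee}$ and numerator $p=h^{\vee}+1$ are coprime consecutive integers both $\geq h^{\vee}$, making $k-1$ non-degenerate admissible. Moreover $\check{k-1}+h^{\vee}=h^{\vee}/(h^{\vee}+1)$, so $Adm^{\check{k-1}}_{\Z}=P^{0}_+=\{0\}$, forcing the parameter $\mu'$ in Theorem \ref{Th:dec-admi-pri} to equal zero. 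The level $\ell$ (with $p=2h^{\vee}+1$, $q=h^{\vee}+1$) is likewise non-degenerate admissible.

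The essential content is to show that $\mathbf{L}^{k-1}_{[0,0]}=\W_{k-1}(\g)$ is the trivial vertex algebra $\C$. For this I would compute the central charge of the simply-laced principal $W$-algebra,
\[
c(\W_{k-1}(\g))=r-12|\rho|^{2}\left((k-1+h^{\vee})+\frac{1}{k-1+h^{\vee}}-2\right),
\]
and observe that at $k-1+h^{\vee}=(h^{\vee}+1)/h^{\vee}$ the parenthesized expression collapses to $1/\bigl(h^{\vee}(h^{\vee}+1)\bigr)$. Combining with the strange formula $|\rho|^{2}=h^{\vee}\dim\g/12$ and the simply-laced identity $\dim\g=r(h^{\vee}+1)$ (since $|\Delta|=rh^{\vee}$) yields $c=0$. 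Because $k-1$ is non-degenerate admissible, $\W_{k-1}(\g)$ is lisse and rational; moreover the index set $(Adm^{k-1}_{\Z}\times\{0\})/\tilde{W}_+=P^{1}_+/\tilde{W}_+$ is a singleton (as $\tilde{W}_+$ acts transitively on the fundamental group $P/Q\cong P^{1}_+$), so $\W_{k-1}(\g)$ admits a unique simple module up to isomorphism. Together with $c=0$ this forces $\W_{k-1}(\g)=\C$.

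With $\W_{k-1}(\g)=\C$ established, Theorem \ref{Th:dec-admi-pri} applied with $\mu=\mu'=0$ directly yields part (2). For part (1) I would specialize part (2) to $\nu=0$, which gives
\[
\L_{1}(\g)\cong\bigoplus_{\lam\in Adm^{k}_{\Z}\cap Q}\mathbf{L}^{k}_{[\lam,0]}\otimes\mathbf{L}^{\ell}_{[\lam,0]};
\]
the $\lam=0$ summand is the vacuum and supplies the vertex algebra embedding $\W_{k}(\g)\otimes\W_{\ell}(\g)\hookrightarrow\L_{1}(\g)$. The dual pair statement then follows from the fact that distinct $\lam$ yield pairwise non-isomorphic simple modules on each tensor factor, so by the standard Schur-type argument for rational vertex algebras the commutant of $\W_{k}(\g)$ inside $\L_{1}(\g)$ is exactly $\W_{\ell}(\g)$ and vice versa. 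The principal obstacle is the central-charge computation together with the module-classification step identifying $\W_{k-1}(\g)$ with $\C$; once that is secured, the corollary is essentially a transparent reading of Theorem \ref{Th:dec-admi-pri}.
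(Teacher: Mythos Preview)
Your proposal is correct and follows essentially the same approach as the paper: both derive the corollary from Theorem~\ref{Th:dec-admi-pri} by specializing to $\mu=\mu'=0$ and using that $\W_{k-1}(\g)=\mathbf{L}^{k-1}_{[0,0]}=\C$ at the level $k-1+h^{\vee}=(h^{\vee}+1)/h^{\vee}$. The paper simply asserts this triviality, whereas you supply a justification via the central charge computation together with the observation that $(Adm^{k-1}_{\Z}\times Adm^{\check{k-1}}_{\Z})/\tilde W_+$ is a singleton; this extra detail is welcome, though the final step ``$c=0$ plus unique simple module $\Rightarrow V=\C$'' tacitly invokes the standard fact that a holomorphic rational lisse VOA of CFT type with $c=0$ is trivial.
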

\begin{proof}
The assertion follows from 
Theorem \ref{Th:dec-admi-pri}
 noting that 
$\W_{k-1}(\g)=\mathbf{L}_{[0,0]}^{k-1}=\C$ if $k+h^{\vee}-1=(h^{\vee}+1)/h^{\vee}$
or $h^{\vee}/(h^{\vee}+1)$.
\end{proof}

\begin{proof}[Proof of Theorem \ref{Th:decom-adm} and Theorem \ref{Th:dec-admi-pri}]
By \cite[Main Theorem 3]{ACL19},
we have
\begin{align*}
\Si{k-1}{\mu}\otimes \Si{1}{\nu}\cong \bigoplus_{\lam\in Adm^k_{\Z}\atop \lam-\mu-\nu\in Q}
\Si{k}{\lam}\otimes  \mathbf{L}_{[\lam,\mu]}^\ell
\end{align*}
as $\L_{k}(\g)\* \W_{\ell}(\g)$-modules.
Hence the assertion follows from Main Theorem \ref{MainTh:iso}
and Theorem \ref{Th:compatibility-with-twist}.
\end{proof}

\section{Application to the extension problem of vertex algebras}
In this section we apply Theorem \ref{Th:dec:generic-principal}
to prove the existence of the extensions of vertex algebras
that are expected by 
four-dimensional supersymmetric gauge theories (\cite{CG, CGL}).

For $\lam\in P_+$ let $\lam^*=-w_0(\lam)$,
so that $E_{\lam}^*\cong E_{\lam^*}$.
\begin{Th}\label{Th:extension}
Let $\g$ be simply-laced.
and
let $k, k'$ be irrational complex numbers satisfying
 \[
 \frac{1}{k+h^\vee} + \frac{1}{k'+ h^\vee} = n.
 \]
 for $n\in \Z_{\geq 1}$.
Then
\begin{align*}
{A}^n[\g] :=\bigoplus_{\lam\in P_+\cap Q}T^{k}_{\lam,0}\* T^{k'}_{\lam^*,0}
\end{align*}
can be equipped with a structure of simple vertex operator algebra
of central charge  
\begin{align*}
2\on{rk}\g+4h \dim \g- n h \dim \g  \left(1 + \frac{\psi^2}{n \psi-1} \right)
\end{align*}
where $\psi = k+h^\vee$ and $h$ is the Coxeter number (which equals to $h^{\vee}$).
The vertex operator algebra 
$A^n[\g]$
is of  CFT type if $n\geq 2$.
\end{Th}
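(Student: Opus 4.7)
The plan is to build $A^n[\g]$ inductively, using Theorem \ref{Th:dec:generic-principal} as the main engine and the Feigin--Frenkel duality \eqref{eq:FF-duality} to pass beyond the base case. For $n=1$, the level constraint $\tfrac{1}{\psi}+\tfrac{1}{\psi'}=1$ forces $k'=\ell$ in the notation of Theorem \ref{Th:dec:generic-principal}, so setting $\mu=\mu'=\nu=0$ there gives the vertex algebra isomorphism
\[
\W^{k-1}(\g)\otimes\mc{U}(\g) \;\cong\; \bigoplus_{\lam\in P_+\cap Q} T^k_{\lam,0}\otimes T^\ell_{\lam,0}.
\]
Since the left-hand side is a simple VOA (tensor product of simple VOAs for $k$ irrational), the right-hand side inherits a simple VOA structure; after a Chevalley-involution relabeling $\lam\mapsto \lam^*$ on the summation index (which preserves $P_+\cap Q$), this realizes $A^1[\g]$.

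For $n\geq 2$ I observe that under Feigin--Frenkel duality the constraint $\tfrac{1}{\psi}+\tfrac{1}{\psi_n'}=n$ translates into a clean integer shift: the dual parameter $\check\psi_n'=1/\psi_n'$ satisfies $\check\psi_n'-\check\psi_1' = n-1$, where $\check\psi_1'$ corresponds to the $n=1$ case. Thus, starting from $A^1[\g]$ and applying Theorem \ref{Th:dec:generic-principal} to the Feigin--Frenkel-dualized second factor, each iteration shifts the dual level by $+1$ at the cost of tensoring with one additional copy of $\mc{U}(\g)$. After $n-1$ such iterations, $A^n[\g]$ appears as a sub-VOA of the resulting iterated Urod extension---concretely, as the commutant of the intermediate $W$-algebra factors $\W^{\ell_i}(\g)$ that accumulate during the iteration. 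Simplicity follows because $\W^k(\g)\otimes\W^{k'}(\g)$ is simple for irrational $k,k'$ and the summands $T^k_{\lam,0}\otimes T^{k'}_{\lam^*,0}$ are pairwise non-isomorphic irreducible $\W^k(\g)\otimes\W^{k'}(\g)$-modules.

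The central charge is $c(\W^k(\g))+c(\W^{k'}(\g))$; using the strange formula $|\rho|^2=h\dim\g/12$ for simply-laced $\g$ together with $\tfrac{1}{\psi}+\tfrac{1}{\psi'}=n$ and $\psi+\psi'=n\psi\psi'$ rewrites this in the stated form. For CFT type when $n\geq 2$, I would compute the conformal weight of the primary vector of $T^k_{\lam,0}\otimes T^{k'}_{\lam^*,0}$: from $h_k(\lam)=\tfrac{(\lam,\lam+2\rho)}{2\psi}-(\rho,\lam)$ together with the $*$-invariances $(\lam^*,\lam^*+2\rho)=(\lam,\lam+2\rho)$ and $(\rho,\lam^*)=(\rho,\lam)$, the total conformal weight becomes $\tfrac{n}{2}(\lam,\lam)+(n-2)(\rho,\lam)$, which is strictly positive on $P_+\cap Q\setminus\{0\}$ exactly when $n\geq 2$ (for $n=1$ a simple root $\lam=\alpha_i$ gives weight zero, precisely obstructing CFT type and explaining the hypothesis $n\geq 2$). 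The main obstacle is the inductive step itself: extracting $A^n[\g]$ from the iterated Urod construction requires careful tracking of the twisted-module indices $T^{\ell_i}_{\lam^{(i)},\lam^{(i-1)}}$ arising at each application of Theorem \ref{Th:dec:generic-principal} and matching them against the target summands $T^k_{\lam,0}\otimes T^{k'}_{\lam^*,0}$ by means of Feigin--Frenkel duality and the twist-compatibility of Theorem \ref{Th:compatibility-with-twist}.
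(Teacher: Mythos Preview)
Your inductive strategy matches the paper's, but your base case is different and in fact simpler: the paper starts one step earlier with the chiral universal centralizer $\mathbf{I}_G^k$ of \cite{Arakawa:2018egx} (satisfying $\frac{1}{k+h^\vee}+\frac{1}{k^*+h^\vee}=0$), which is simple by a geometric argument from \cite{AraMorCore}, and then obtains $A^1[\g]$ as $\on{Com}(\W^{k^*+1}(\g),\, \mathbf{I}_G^k \otimes \mc{U}(\g))$. Your observation that $A^1[\g]$ is already realized directly as $\W^{k-1}(\g)\otimes\mc{U}(\g)$ via Theorem~\ref{Th:dec:generic-principal} with $\mu=\mu'=\nu=0$ avoids importing that external construction entirely. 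The inductive step (tensor with $\mc{U}(\g)$, take the commutant of the new $\W$-algebra factor, use Feigin--Frenkel duality to read off the level shift) is the same in both approaches, and your worry about ``tracking twisted-module indices'' dissolves once you write the step cleanly as $A^{n+1}[\g]=\on{Com}(\W^{k'_n+1}(\g),\,A^n[\g]\otimes\mc{U}(\g))$.

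Two points need repair. First, your ``Chevalley-involution relabeling $\lam\mapsto\lam^*$ on the summation index'' does not do what you want: Theorem~\ref{Th:dec:generic-principal} yields $\bigoplus T^k_{\lam,0}\otimes T^\ell_{\lam,0}$, and re-indexing gives $\bigoplus T^k_{\lam^*,0}\otimes T^\ell_{\lam^*,0}$, not $\bigoplus T^k_{\lam,0}\otimes T^\ell_{\lam^*,0}$. What you actually need is to apply, to one tensor factor, the VOA automorphism of $\W^\ell(\g)$ induced by the Chevalley involution of $\g$, which sends $T^\ell_{\lam,0}$ to $T^\ell_{\lam^*,0}$ as modules; this transports the VOA structure correctly but is an automorphism, not a relabeling. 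Second, your simplicity argument for $n\ge 2$ is not valid: that the summands are pairwise non-isomorphic simple modules over the simple subalgebra $\W^k(\g)\otimes\W^{k'}(\g)$ does not by itself force simplicity of the extension as a VOA. The paper instead uses that a commutant inside a simple VOA is simple \cite[Proposition~5.4]{CreGenNak}; since your inductive step is also a commutant construction and your base $\W^{k-1}(\g)\otimes\mc{U}(\g)$ is a tensor product of simple VOAs, the same citation closes the gap. Your central-charge and CFT-type computations agree with the paper's.
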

\begin{proof}
We shall prove the assertion on induction on $n$ using the fact that
\begin{align}
T^{k'}_{\mu,\mu'}\*\mc{U}(\g)\cong 
\bigoplus_{\lam\in P_+\atop \lam-\mu-\mu'\in Q}
T^{k'+1}_{\lam,\mu'}\* T^{\ell}_{\lam,\mu}
\cong \bigoplus_{\lam\in P_+\atop \lam-\mu-\mu'\in Q}
T^{k'+1}_{\lam,\mu'}\* T^{\check{\ell}}_{\mu,\lam},
\label{eq:for-ext}
\end{align}
with $\check{\ell}$ satisfying the relation
 \[
 \frac{1}{\check{\ell}+ h^\vee} =   \frac{1}{k'+ h^\vee}+1
 \]
which follows from Theorem \ref{Th:dec:generic-principal} and \eqref{eq:langland-duality}.

Let us show  the assertion  for  $n=1$.
Let $\mathbf{I}_G^k$ be the  {\em chiral universal centralizer}
on $G$ at level $k$  (\cite{Arakawa:2018egx}),
which was introduced earlier in \cite{FreSty06} for the 
 $\g=\mf{sl}_2$ case
as the {\em modified regular representation of the Virasoro algebra}.
By definition, 
$\mathbf{I}_G^k$ is obtained by taking the principal Drinfeld-Sokolov reduction with respect to two commuting actions of $\affg$ on the algebra 
of the chiral differential operators \cite{MalSchVai99,BeiDri04} $\mc{D}_{G,k}^{ch}$ on $G$ at level $k$.
The $\mathbf{I}_G^k$  is a conformal vertex algebra of central charge $2 \on{rk}\g + 48(\rho|\rho^\vee)
=2 \on{rk}\g+4 h^{\vee}\dim \g$,
equipped with a conformal vertex algebra homomorphism
$\W^k(\g)\* \W^{k^*}(\g)\ra \mathbf{I}_G^k$,
where  
 $k^*$  the dual level of $k$  defined by the equation
$$\frac{1}{k+h^{\vee}}+\frac{1}{k^*+h^{\vee}}=0.$$
As explained in   \cite{Arakawa:2018egx},
$\mathbf{I}_G^k$
  is a strict chiral quantization (\cite{Arakawa:2018egx}) of the {\em universal centralizer}
$\mc{S}\times_{\g^*}(G\times \mc{S})$,
where $ \mc{S}$ is the 
Kostant-Slodowy slice.
Since $\mc{S}\times_{\g^*}(G\times \mc{S})$ is a smooth symplectic variety,
 $\mathbf{I}_G^k$ is simple (\cite{AraMorCore}).
For an irrational $k$,
we have the decomposition
$$\mathbf{I}_G^k\cong \bigoplus_{\lam\in P_+} T_{\lam,0}^k\* T_{\lam^*,0}^{k^*}$$
as  $\W^k(\g)\* \W^{k^*}(\g)$-modules,
which follows from the decomposition \cite{ArkGai02,Zhu08} of  $\mc{D}_{G,k}^{ch}$
as $\affg\times \affg$-modules.
Hence, by \eqref{eq:for-ext},
\begin{align*}
 \mathbf{I}_G^k\* \mc{U}(\g)
 \cong \bigoplus_{\lam\in P_+}T_{\lam,0}^k\* T_{\lam^*,0}^{k^*}\* \mc{U}(\g)
 \cong \bigoplus_{\lam\in P_+, \ \mu\in P_+
 \atop \mu-\lam^*\in Q}T_{\lam,0}^k \* T_{\mu,0}^{k^*+1}\* T_{\lam^*,\mu}^{\check{\ell}},
\end{align*}
and $\check{\ell}$ satisfies the relation
$$\frac{1}{k+h^{\vee}}+\frac{1}{\check{\ell}+h^{\vee}}=1.$$
It follows that 
\begin{align*}
{A}^1[\g]:=\on{Com}(\W^{k^*+1}(\g), \mathbf{I}_G^k\*\mc{U}(\g))
\cong \bigoplus_{\lam\in P_+\cap Q}T^{k}_{\lam,0}\* T^{\check{\ell}}_{\lam^*,0}.
\end{align*}
Moreover,
since $\mathbf{I}_G^k\* \mc{U}(\g)$ is simple
${A}^1[\g]$ is simple as well by  \cite[Proposition 5.4]{CreGenNak}.

Assuming that the statement is true for $n\in \Z_{\geq 1}$,
we find similarly that
$${A}^{n+1}[\g]:=\on{Com}(\W^{k'+1}(\g), {A}^{n}[\g]\* \mc{U}(\g))$$
has the required decomposition.

For the central charge computation it is useful to introduce 
\[
\psi:= k+ h^\vee\qquad \text{and} \qquad  \phi_n := \frac{\psi}{n\psi -1 },  
\]
so that 
\[
\frac{1}{\psi} + \frac{1}{\phi_n} = n.
\]
By \eqref{eq:cc-of-principal-Ugot}
and the fact that the central charge of $\W^{k+1}(\g)$
is 
\[
(1-h(h+1)(k+h)^2/(k+h+1))\on{rk}\g  = \on{rk}\g - \dim \g h   \frac{\psi^2}{\psi +1}
\]
(here we used that $ (h+1) \on{rk} \g = \dim \g$)
we have
\begin{equation*}
\begin{split}
c_{A^{n+1}[\g]} &= c_{A^{n}[\g]}-\frac{h^2+h-1}{h+1}\dim \g - \on{rk}\g   + h\frac{\psi^2}{(n\psi-1)((n+1)\psi-1)})\dim\g \\
&= - \dim \g h   + h  \phi_n \phi_{n+1}\dim\g, 
\end{split}
\end{equation*}
where 
 $c_V$ is the central charge of $V$
 and
we have put $A^{0}[\g]=\mathbf{I}_G^k$.  
Note that
\[
\phi_n - \phi_{n+1} = \phi_n\phi_{n+1} = 
\psi ( n\phi_n - (n+1) \phi_{n+1} ) 
\]
and so by induction for $n$
we have
\begin{align*}
c_{A^{n}[\g]}&=2\on{rk}\g+4h \dim \g- n h \dim \g  (1 + \psi \phi_n).
\end{align*}

The conformal dimension 
of $T_{\lam,0}^k\* T_{\lam^*,0}^{k'}$
is
\begin{align*}
\frac{n}{2}(|\lam+\rho|^2-|\rho|^2)-2(\lam|\rho)
=\frac{n}{2}|\lam|^2+(n-2)(\lam|\rho),
\end{align*}
which is an integer for $\lam \in Q$.
If $n\geq 2$,
this is clearly non-negative 
and is equal to zero if and only if $\lam=0$.
Whence the last assertion.
\end{proof}

\begin{Rem}
More generally,
it is expected \cite{CG, CGL} that 
if $k$ and $k'$ are irrational numbers related by
\[
 \frac{1}{k+h^\vee} + \frac{1}{k'+ h^\vee} = n\in \Z,
 \]
 then
\[
 \bigoplus_{\lambda \in Q \cap P^+ } \We{k}{\lambda, f} \otimes \We{k'}{\lambda^*, f'}
 \]
  can be given the structure of a simple \voa{} 
  for any nilpotent elements $f$, $f'$.
\end{Rem}

 \section{Fusion categories of modules over quasi-lisse W-algebras
}
For a vertex operator algebra $V$,
 let $\mc{C}_V^{ord}$ be the full subcategory of 
 the 
 category of finitely generated $V$-modules
 consisting of modules $M$
 on which $L_0$ acts  locally finitely,
 the $L_0$-eivenvalues of $M$ are bounded from below,
 and all the generalized $L_0$-eigenspaces are finite-dimensional.
 A simple object in $\mc{C}_V^{ord}$ is called an {\em ordinary representation} of $V$.

 Recall that a finitely strongly generated vertex algebra $V$ is called {\em quasi-lisse} \cite{AraKaw18}
 if the associate variety $X_V$ has finitely many symplectic leaves.
  By \cite[Theorem 4.1]{AraKaw18},
  if $V$ is quasi-lisse then
$\mc{C}_V^{ord}$ 
has only finitely many simple objects. 

\begin{Conj}\label{Conj:verte-tensor}
 Let $V$ a finitely strongly generated, 
 self-dual, quasi-lisse vertex operator algebra of CFT type.
Then the category
  $\mc{C}_V^{ord}$ has the structure of a vertex tensor category
 in the sense of \cite{HuaLep94}.
 \end{Conj}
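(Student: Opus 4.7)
The plan is to establish the conjecture within the Huang--Lepowsky--Zhang (HLZ) logarithmic tensor category framework, following the strategy that Creutzig--Huang--Yang used in the admissible affine case \cite{CreHuaYan18}. Under HLZ the task reduces, modulo essentially formal bookkeeping, to three inputs: (i) $\mc{C}_V^{ord}$ is closed under contragredient, finite direct sums, and generalized Verma-type constructions; (ii) every object of $\mc{C}_V^{ord}$ is $C_1$-cofinite in the sense of Huang; (iii) products and iterates of intertwining operators among objects of $\mc{C}_V^{ord}$ converge on their natural domains and admit analytic extensions with the expected branching behaviour.

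First I would exploit the quasi-lisse hypothesis at the level of the associated variety $X_V=\Spec R_V$, where $R_V=V/C_2(V)$. The Arakawa--Kawasetsu finiteness theorem already gives that $\mc{C}_V^{ord}$ has finitely many simple objects and that each has polynomial growth of its $L_0$-graded pieces. The heart of the argument should be to upgrade this to a uniform $C_1$-cofiniteness statement: for every $M\in\mc{C}_V^{ord}$, the associated graded $\gr M$ with respect to the Li filtration is finitely generated over $R_V$. Since $X_V$ has only finitely many symplectic leaves, the singular support of $M$ is concentrated on finitely many such leaves, and one should be able to argue leaf by leaf, reducing to finite generation over the coordinate ring of a leaf closure, which is a finite-dimensional affine Poisson variety.

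With $C_1$-cofiniteness in hand, the analytic input reduces, by Huang's arguments adapted to the logarithmic setting in HLZ, to a regular-singular system of differential equations for genus-zero $n$-point correlation functions; convergence, associativity, and the $P(z)$-compatibility axioms then follow formally. The principal obstacle is the $C_1$-cofiniteness step itself: unlike the lisse case, $R_V$ has positive dimension, so finite generation of $\gr M$ over $R_V$ is a genuine geometric condition whose verification must use the symplectic structure of $X_V$ in a nontrivial way. A complementary strategy I would pursue in parallel is to use the translation functor developed in this paper to transport vertex tensor category structure from a case where it is already known (for instance an integrable affine vertex algebra, or a rational minimal-series $W$-algebra) to the quasi-lisse $V$ of interest; this would also align with the way the conjecture is subsequently invoked in the paper.
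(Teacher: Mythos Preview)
The statement you are attempting to prove is labelled \textbf{Conjecture}~\ref{Conj:verte-tensor} in the paper, and the paper does not prove it. Immediately after stating the conjecture the authors merely observe that it would follow if one could show $C_1$-cofiniteness of every object in $\mc{C}_V^{ord}$ together with finite length of grading-restricted generalized Verma modules (citing \cite{CY20}), and they sketch a second possible route via cosets and vertex algebra extensions; neither route is carried out. So there is no ``paper's own proof'' to compare against.

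Your proposal is not a proof but a research programme, and you yourself correctly identify the gap: the $C_1$-cofiniteness step is the entire content, and you have no argument for it. The heuristic you offer --- that the singular support of $M$ is concentrated on finitely many symplectic leaves, so one can argue ``leaf by leaf'' --- does not yield the required conclusion. Finiteness of the number of leaves does not by itself force $\gr M$ to be finitely generated over $R_V$ (or $M/C_1(M)$ to be finite-dimensional, which is the actual condition needed); a leaf can have positive dimension and support modules that are not finitely generated. Nothing in the Arakawa--Kawasetsu argument, which extracts only finiteness of simple ordinary modules and modular-type growth, supplies the missing algebraic input. Your fallback idea of transporting vertex tensor structure via the translation functor is also circular in this context: the paper uses Conjecture~\ref{Conj:verte-tensor} as a hypothesis precisely because the translation functor by itself does not produce tensor structure on the target category without already knowing it on an appropriate auxiliary category.

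In short, your outline recovers exactly the reduction the paper already records (to $C_1$-cofiniteness plus finite length, via the HLZ machinery), but does not advance beyond it. The conjecture remains open.
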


In the case that $V$ is {\em lisse},
that is, $X_V$ is zero-dimensional,
Conjecture \ref{Conj:verte-tensor}
has been proved in  \cite{Huang09}. 
Conjecture \ref{Conj:verte-tensor} is true  if one can show that every object in $\mc{C}_V^{ord}$ is $C_1$-cofinite and if grading-restricted generalized Verma modules for V are of finite length \cite{CY20}.
If $V$ is a vertex algebra that has an affine vertex subalgebra at admissible level, then 
another possibility of proving Conjecture \ref{Conj:verte-tensor} is the following:
First show that the affine vertex subalgebra is simple, secondly prove that a suitable category of modules of the coset by the affine subalgebra in $V$ has vertex tensor category structure. Then use the theory of vertex algebra extensions \cite{CKM} to deduce vertex tensor category structure on $\mc{C}_V^{ord}$. This is a promising direction as for example many simple quotients of cosets of $\mathcal W$-algebras of type $A$ are rational and lisse by \cite[Cor. 6.13]{CL20} and similar results for $\mathcal W$-algebras of type $B, C$ and $D$ are work in progress.

In the case that Conjecture \ref{Conj:verte-tensor}
is true, 
we denote by $M\boxtimes_V N$ the tensor product of $V$-modules $M$ and $N$ in $\mc{C}_V^{ord}$,
or simply by $M\boxtimes N$ if no confusion should occur.
 
 \smallskip

Now let 
 $k$ be an admissible number for $\affg$.
As explained in Subsection \ref{subsection:Decomposition at admissible levels},
 $$X_{\L_k(\g)}=\overline{\mathbb{O}_k}$$
 for some 
 nilpotent orbit $\mathbb{O}_k$,
 and hence,
 $\L_k(\g)$ is quasi-lisse.
 By the conjecture of Adamovi\'{c} and Milas 
 \cite{AdaMil95} that was proved in
 \cite{A12-2},
the category $C_{\L_k(\g)}^{ord}$ is semisimple
and 
 $\{ \Si{k}{\lambda} | \lambda \in Adm^k_{\Z}\}$
 gives a complete set of isomorphism classes in  $\mc{C}_{\L_k(\g)}^{ord}$.
Note that
in the case $k$ is non-negative integer,
 this is a well-known fact \cite{FreZhu92},
 and if this is the case 
$\L_k(\g)$ is rational and lisse,
and  hence
Conjecture \ref{Conj:verte-tensor} holds by  \cite{HuaLep95}.
In the case $k$ is admissible but not an integer,
 $\L_k(\g)$  is not rational nor lisse anymore.
 Nevertheless,
Conjecture \ref{Conj:verte-tensor} has been proved 
for $V=\L_k(\g)$ in 
  \cite{CreHuaYan18}
provided that $\g$ is simply-laced.
Moreover,
it was shown in  \cite{Cre19}
that  $\mc{C}_{\L_k(\g)}^{ord}$  
is a fusion category,
i,e.,
any object is rigid,
and
we have
 an isomorphism
 \begin{align}
 \label{eq:fusion-admissible}
K[C_{\L_{p-h^{\vee}}(\g)}^{ord}]\cong K[C_{\L_k(\g)}^{ord}],
\quad [\Si{p-h^{\vee}}{\lam}]\mapsto [\Si{k}{\lam}],
\end{align}
of Grothendieck rings of our fusion categories,
where $p$ is the numerator of $k+h^{\vee}$.

%

Let $f$ be a nilpotent element of $\g$.
 By \cite{Ara09b},
 $$X_{H_{DS,f}^0(\L_k(\g))}=X_{\L_k(\g)}\cap \mc{S}_f,$$
 where
 $\mc{S}_f$ is the Slodowy slice at $f$ in $\g$.
 Therefore, in the case  that $k$ is admissible,
$X_{H_{DS,f}^0(\L_k(\g))}=\overline{\mathbb{O}_k}\cap \mc{S}_f$  is a  {\em nilpotent Slodowy slice},
that is,
the intersection of the Slodowy slice with a nilpotent orbit closure,
provided that
$H_{DS,f}^0(\L_k(\g))\ne 0$,
or equivalently,
$\overline{\mathbb{O}_k}\cap \mc{S}_f\ne \emptyset$,
that is,
$f\in \overline{\mathbb{O}_k}$.
In particular,
$H_{DS,f}^0(\L_k(\g))$ is quasi-lisse if 
$f\in \overline{\mathbb{O}_k}$,
and so is the simple $W$-algebra
$\W_k(\g,f)$.
If $f\in \mathbb{O}_k$,
then $\overline{\mathbb{O}_k}\cap \mc{S}_f$ is a point by the transversality of the Slodowy slices,
and therefore,
$\W_k(\g,f)$ is lisse.

The good grading \eqref{eq:grading}
is called {\em even} of $\g_j=0$ unless $j\in \Z$.
If this is the case,
$\W^k(\g,f)$ is $\Z_{\geq 0}$-graded  and thus of CFT type.


The following assertion was stated in the case that $f\in \mathbb{O}_k$ in \cite{AEkeren19},
but the same proof applies.
\begin{Th}
Let $k$ be admissible and $f\in \overline{\mathbb{O}}_k$.
Suppose that $f$ admits a good even graing.
Then, for $\lam\in Adm^k_{\Z}$,
$\Si{k}{\lam,f}=H_{DS,f}^0(\Si{k}{\lam})$ is simple.
In particular,
$H_{DS,f}^0(\L_k(\g))=\W_k(\g,f)$.
\end{Th}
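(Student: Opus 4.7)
The plan is to adapt, essentially verbatim, the proof of the non-degenerate admissible principal theorem immediately preceding this statement, since that argument never really used that $f$ was principal, only that $f$ admits an even good grading and that the required vanishing and resolutions are available. The key inputs I would use are: (i) Theorem \ref{Th:vanishing} for the untwisted reduction on $\on{KL}_k$; (ii) the two-sided BGG-type resolution of $\Si{k}{\lam}$ by Wakimoto modules from \cite{A-BGG}; (iii) an explicit computation of $H_{DS,f}^\bullet$ on Wakimoto modules; and (iv) a length/character argument.

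First, recall the resolution $\cdots \to C^{-1} \to C^0 \to C^1 \to \cdots$ with
\[
C^i = \bigoplus_{\substack{w \in \widehat{W}(\lam+k\Lam_0) \\ \ell^{\infty/2}(w)=i}} \mathbb{W}^k_{w\circ \lam}.
\]
Next, I would prove that $H_{DS,f}^i(\mathbb{W}^k_\mu) = 0$ for $i \ne 0$ and identify $H_{DS,f}^0(\mathbb{W}^k_\mu)$ with an explicit ``free field'' $\W^k(\g,f)$-module, namely the analogue for general $f$ of the Heisenberg module $\pi^{k+h^\vee}_\mu$ from the principal case. The evenness of the good grading is decisive here: it ensures that the BRST differential for $f$ splits cleanly against the Wakimoto realization of $\mathbb{W}^k_\mu$, and one obtains a generalized Miura-type description in the spirit of \cite{KacRoaWak03}, whose cohomology is concentrated in degree zero. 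With this in hand, applying $H_{DS,f}^0$ termwise to the resolution produces a complex whose cohomology computes $H_{DS,f}^\bullet(\Si{k}{\lam})$; since $\Si{k}{\lam} \in \on{KL}_k$, vanishing in positive degrees is immediate from Theorem \ref{Th:vanishing}.

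For vanishing in negative degrees and for simplicity, I would reproduce, essentially line by line, the semi-infinite length argument used in the proof immediately above: any simple $\W_k(\g,f)$-subquotient appearing in the local composition series of $H_{DS,f}^0(\mathbb{W}^k_{w\circ\lam})$ must come from $w$ in the finite Weyl group $W \subset \widehat W(k\Lam_0)$, and since $\ell^{\infty/2}(w) \ge 0$ on $W$ with equality only at $w=1$, no such subquotient can appear in $H_{DS,f}^i(\Si{k}{\lam})$ for $i < 0$. An Euler character computation, comparing the resulting alternating sum to the Frenkel--Kac--Wakimoto character of the simple highest weight $\W^k(\g,f)$-module with the expected highest weight, identifies $H_{DS,f}^0(\Si{k}{\lam})$ with $\Si{k}{\lam,f}$ and forces simplicity. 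The ``in particular'' assertion then follows by setting $\lam = 0$: $H_{DS,f}^0(\L_k(\g))$ is a nonzero quotient of $\W^k(\g,f)$ which is simple, hence must coincide with $\W_k(\g,f)$. The main obstacle is step (iii) — the Wakimoto cohomology computation for arbitrary $f$ — since the analogue of the Heisenberg realization is subtler away from the principal case, and the even-grading hypothesis is precisely what keeps it tractable.
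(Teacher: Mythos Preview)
There is a genuine gap. The argument you are trying to transplant from the preceding theorem relies on two facts specific to the principal case that you do not have for general $f$:
\begin{itemize}
\item the rationality of $\W_k(\g)$, used to decompose $H_{DS,f_{prin},\cmu}^i(\Si{k}{\lam})$ as a direct sum of known simples;
\item the classification result from \cite{Ara07} telling you exactly which simple $\W_k(\g)$-modules can occur in the composition series of $\pi^k_{w\circ\lam-(k+h^\vee)\cmu}$, and for which $w$.
\end{itemize}
For arbitrary $f\in\overline{\mathbb{O}}_k$ neither is available: rationality of $\W_k(\g,f)$ is not known (indeed it is part of what one is ultimately aiming at), and there is no analogue of the \cite{Ara07} classification of composition factors of the free-field modules. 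Without these, your ``semi-infinite length'' step and your Euler-character step do not force simplicity; knowing the character of $H_{DS,f}^0(\Si{k}{\lam})$ alone does not tell you it is irreducible. Note also that vanishing for all $i\ne 0$ is already immediate from Theorem~\ref{Th:vanishing}, since $\Si{k}{\lam}\in\on{KL}_k$; the BGG resolution is not needed for that, and the only issue is simplicity.

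The paper does not reprove this statement here; it invokes \cite{AEkeren19} and observes that the same proof applies when $f\in\overline{\mathbb{O}}_k$ rather than $f\in\mathbb{O}_k$. That argument is of a different nature: it exploits the even good grading to access the ``$-$''-variant of the reduction (cf.\ Corollary~\ref{Co:compatibility-minus-reduction} and \cite{Ara08-a}), for which one can prove directly, via highest-weight module techniques and without any rationality hypothesis, that irreducibles are sent to irreducibles or zero. The evenness hypothesis is used to relate the ``$+$'' and ``$-$'' reductions, not to tame a Wakimoto computation. So the obstacle is not your step~(iii) but the circularity in your steps that replace rationality/classification.
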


 \begin{Lem}
 Let $f$ be an even nilpotent element.
 Then 
 $\W_k(\g,f)$ is  self-dual and of CFT type
 with respect to the 
 Dynkin grading.
 \end{Lem}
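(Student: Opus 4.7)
The claim has two parts: the CFT-type property and self-duality. The plan is to dispatch the grading question first via the standard structure theorem for $W$-algebras, then reduce self-duality to Li's criterion on $L_1 V_1$.

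First, for CFT type: since $f$ is even, the Dynkin grading $\g = \bigoplus_{j \in \Z} \g_j$ is integer-valued. By the Kac-Roan-Wakimoto (resp.\ De Sole-Kac) structure theorem, $\W^k(\g,f)$ is freely and strongly generated by fields $J^{\{x\}}$ in bijection with a homogeneous basis of $\g^f$, with $J^{\{x\}}$ of conformal weight $1 - j$ whenever $x \in \g^f \cap \g_j$. Since $\g^f \subset \g_{\leq 0}$ and the grading is integer, all generating weights lie in $\Z_{\geq 1}$. Thus $\W^k(\g,f) = \bigoplus_{n \geq 0} \W^k(\g,f)_n$ with finite-dimensional homogeneous pieces and $\W^k(\g,f)_0 = \C|0\rangle$. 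Since $\W_k(\g,f)$ is a graded quotient, it inherits both $\Z_{\geq 0}$-gradation and the one-dimensional vacuum component, proving it is of CFT type.

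Second, for self-duality, I would invoke Li's theorem: a simple VOA of CFT type is isomorphic to its contragredient module (equivalently, admits a non-degenerate invariant bilinear form) if and only if $L_1 V_1 = 0$. The weight-one subspace $\W_k(\g,f)_1$ is the image of the subspace of strong generators $J^{\{x\}}$ for $x \in \g^f \cap \g_0 = \g^\natural$, the centralizer of the $\mathfrak{sl}_2$-triple in $\g_0$. For the Dynkin grading, these generators span an affine subalgebra of type $V^{\tau_k}(\g^\natural)$ inside $\W^k(\g,f)$, and with respect to the conformal vector $\omega_{\W^k(\g,f)}$ they are Virasoro primary of weight $1$ (this is standard for the Dynkin-graded quantum Hamiltonian reduction, cf.\ Kac-Roan-Wakimoto). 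Virasoro primarity of weight $1$ is exactly the statement $L_1 J^{\{x\}} = 0$, so $L_1 \W^k(\g,f)_1 = 0$ and hence $L_1 \W_k(\g,f)_1 = 0$. Applying Li's theorem to the simple $\W_k(\g,f)$ yields the desired self-duality.

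The main technical point is the primarity of the weight-one generators with respect to the specific conformal vector used in the reduction; for general good gradings, the weight-one fields $J^{\{x\}}$ for $x \in \g^\natural$ need not be primary (one sometimes modifies them by total derivatives), but in the Dynkin-even case they are genuinely primary, which is precisely why the even Dynkin grading is the correct hypothesis. Once this is in hand, Li's criterion is essentially immediate and there is no further obstruction.
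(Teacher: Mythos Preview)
Your argument is correct and is essentially the approach behind the paper's proof. The paper likewise obtains CFT type from evenness of the Dynkin grading, and for self-duality it simply cites \cite[Proposition~6.1]{AEkeren19}, which computes the obstruction $L_1 V_1$ and finds it vanishes precisely when $x_0 = h/2$ (the Dynkin case)---exactly the primarity statement for the weight-one affine currents that you feed into Li's criterion.
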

 \begin{proof}
 Since $f$ is an even nilpotent element,
 the Dynkin grading is even, and so  $\W_k(\g,f)$ 
 is of CFT type.
 The self-duality follows from the formula in \cite[Proposition 6.1]{AEkeren19}.
 \end{proof}

\begin{Rem}
If the grading is not Dynkin,
$\W_k(f,g)$ need not be self-dual, see \cite[Proposition 6.3]{AEkeren19}.
\end{Rem}
\begin{Th}\label{Th:tensor-cateogry}\label{Th::tensor-cateogry}
Let $\g$ be simply-laced,
 $k$  admissible,
and let $f$ be an even   
nilpotent element in 
$\overline{\mathbb{O}}_k$.
Suppose Conjecture \ref{Conj:verte-tensor} is true
for 
$\W_{k+1}(\g,f)$ and also that $\W_{k+1}(\g,f)$ is self-dual.
Then
the functor 
$$ \mc{C}_{\L_k(\g)}^{ord}\ra \mc{C}_{\W_k(\g,f)}^{ord},
\quad M\mapsto H_{DS,f}^0(M),$$
is a unital braided tensor functor.
In particular,
the modules 
 $\Si{k}{\lam,f}$, $\lam \in Adm^k_{\Z}$,
 are rigid.
\end{Th}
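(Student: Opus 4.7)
The plan is to establish that the Drinfeld--Sokolov reduction $H_{DS,f}^0$ induces a unital braided tensor functor $\mc{C}_{\L_k(\g)}^{ord}\to \mc{C}_{\W_k(\g,f)}^{ord}$; rigidity of the $\Si{k}{\lam,f}$ then follows formally because braided tensor functors preserve rigid objects and each $\Si{k}{\lam}$ is rigid in the source by \cite{Cre19}. I would first verify that the functor maps the ordinary subcategory correctly: the preceding theorem gives $H_{DS,f}^0(\Si{k}{\lam})\cong \Si{k}{\lam,f}$ as a simple $\W_k(\g,f)$-module, and by exactness on $\on{KL}_k$ (Theorem \ref{Th:vanishing}) together with the CFT-type grading coming from the even good grading of $f$, every ordinary $\L_k(\g)$-module has DS reduction belonging to $\mc{C}_{\W_k(\g,f)}^{ord}$.

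The heart of the argument is the tensor-functor property, following the strategy of \cite{Cre19} adapted to $W$-algebras. The key input is Theorem \ref{Th:decom-adm} applied with its level parameter taken so that the simple admissible module on the left-hand side is $\Si{k}{\mu,f}$, yielding
\begin{align*}
\Si{k}{\mu,f}\otimes \Si{1}{\nu}\cong \bigoplus_{\substack{\lam\in Adm^{k+1}_\Z\\ \lam-\mu-\nu\in Q}} \Si{k+1}{\lam,f}\otimes \mathbf{L}^{\ell'}_{[\lam,\mu]}
\end{align*}
as modules for the coset $\W^{k+1}(\g,f)\otimes \W^{\ell'}(\g)\hookrightarrow \W^k(\g,f)\otimes \mc{U}(\g,f)$ supplied by the extension \eqref{eq:Urod-embedding} together with Main Theorem \ref{MainTh:iso}. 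Under the assumed self-duality and Conjecture \ref{Conj:verte-tensor} for $\W_{k+1}(\g,f)$, together with the rationality and lisseness of $\W_{\ell'}(\g)$ from non-degenerate admissibility, the extension framework of Creutzig--Kanade--McRae \cite{CKM} applies to identify $\mc{C}_{\W_k(\g,f)}^{ord}$ with the category of local modules for a commutative algebra object in $\mc{C}_{\W_{k+1}(\g,f)}^{ord}\boxtimes \mc{C}_{\W_{\ell'}(\g)}^{ord}$. A parallel analysis on the affine side, using the vertex tensor category structure on $\mc{C}_{\L_k(\g)}^{ord}$ from \cite{CreHuaYan18} and the coset embedding \eqref{eq:GKO}, describes $\mc{C}_{\L_k(\g)}^{ord}$ analogously, and DS reduction intertwines the two descriptions because of the Main Theorem and the exactness of DS on $\on{KL}$.

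The main obstacle will be upgrading Theorem \ref{Th:decom-adm}, which a priori is an isomorphism of ordinary modules over the ordinary (vector-space) tensor product of vertex algebras, to a statement compatible with the $P(z)$-fusion products and braiding isomorphisms of vertex tensor categories. This is precisely where the CKM machinery is indispensable, and its application requires Conjecture \ref{Conj:verte-tensor} on both sides of the extension (supplied by hypothesis for $\W_{k+1}(\g,f)$ and by \cite{CreHuaYan18} for $\L_k(\g)$) along with the self-duality hypothesis. Once this compatibility is in place, rigidity of each $\Si{k}{\lam,f}$ is automatic: the rigid dual of $\Si{k}{\lam}$ is $\Si{k}{\lam^*}$ in $\mc{C}_{\L_k(\g)}^{ord}$, and the braided tensor functor $H_{DS,f}^0$ transports this duality to $\Si{k}{\lam^*,f}$ in $\mc{C}_{\W_k(\g,f)}^{ord}$.
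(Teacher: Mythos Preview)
Your proposal assembles the correct ingredients—Theorem \ref{Th:decom-adm}, the CKM extension machinery, and the hypothesis on $\W_{k+1}(\g,f)$—but the step you flag as ``the main obstacle'' is in fact a genuine gap, and your suggested resolution does not close it. You write that ``DS reduction intertwines the two descriptions because of the Main Theorem and the exactness of DS on $\on{KL}$.'' The Main Theorem gives isomorphisms of \emph{modules}; it says nothing about compatibility with $P(z)$-intertwining operators or braiding. To conclude that the induced functor between the two local-module categories is braided monoidal, you would need to know that $H^0_{DS,f}$ already gives a braided tensor functor $\mc{C}^{ord}_{\L_{k+1}(\g)}\to\mc{C}^{ord}_{\W_{k+1}(\g,f)}$ on the ambient categories—which is precisely the statement you are trying to prove, one level up. So the argument is either circular or an unanchored induction.

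The paper avoids this entirely by never asserting that DS reduction itself respects the tensor structure. Instead, the braided tensor functor is built from pieces that are \emph{already known} to be tensor, and only afterward identified with $H^0_{DS,f}$ on objects. Concretely: one uses \cite{Cre19} (the $f=f_{prin}$ case, already established) to realize a simple-current twist $\mc{C}^{ord,tw}_{\L_k(\g)}$ as the fusion subcategory $\{\mathbf{L}^{\ell}_{[0,\mu]}\mid \mu\in Adm^{\check{\ell}}_{\Z}\}$ of $\mc{C}^{ord}_{\W_{\ell}(\g)}$; tensors with the unit $\W_{k+1}(\g,f)$ to obtain a subcategory $\mc{D}\subset \mc{C}^{ord}_{\W_{k+1}(\g,f)\*\W_{\ell}(\g)}$; checks via Theorem \ref{Th:centralize} that $W=\W_k(\g,f)\*\L_1(\g)$ centralizes $\mc{D}$, so the CKM induction $\mc{F}=W\boxtimes_V(-)$ is a braided tensor functor; and then computes, using Theorem \ref{Th:decom-adm} and Frobenius reciprocity, that $\mc{F}(\W_{k+1}(\g,f)\*\mathbf{L}^{\ell}_{[0,\mu]})\cong \Si{k}{\mu,f}\*\Si{1}{-\mu+Q}$. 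A final untwisting step—exploiting that every simple object of $\mc{C}^{ord}_{\L_1(\g)}$ is invertible—passes from $\W_k(\g,f)\*\L_1(\g)$ down to $\W_k(\g,f)$, and the composite is seen to send $\Si{k}{\mu}\mapsto\Si{k}{\mu,f}$. The crucial point you are missing is that the connection to $\mc{C}^{ord}_{\L_k(\g)}$ runs through the \emph{principal} $W$-algebra $\W_{\ell}(\g)$ and the result of \cite{Cre19} for that case, not through a parallel affine-side extension picture intertwined by DS reduction.
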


\begin{Rem}
Note that by Theorem \ref{Th:decom-adm} we have that $\W_k(\g,f) \otimes \L_1(\g)$ is an extension of $\W_{k+1}(\g,f) \otimes \W_\ell(\g)$, where $\W_\ell(\g)$ is rational and lisse if $k$ is admissible. Especially every ordinary module of $\W_k(\g,f) \otimes \L_1(\g)$ is an object in the category of ordinary modules of $\W_{k+1}(\g,f) \otimes \W_\ell(\g)$ and so if the latter has vertex tensor category structure, then so does the former by \cite{CKM}. In other words, if Conjecture \ref{Conj:verte-tensor} is true for $\W_{k+1}(\g,f)$ and if $k$ is admissible then Conjecture \ref{Conj:verte-tensor} is also true for $\W_{k}(\g,f)$. 

Moreover, \cite[Thm. 5.12]{CKM2} applies to our setting if $\W_{k}(\g,f)$ is $\mathbb Z$-graded.  Thus, if the category of ordinary modules of $\W_{k+1}(\g,f) \otimes \W_\ell(\g)$ is a fusion category, then so is the one of $\W_k(\g,f) \otimes \L_1(\g)$. If $\W_{k}(\g,f)$ is $\mathbb Z$-graded by conformal weight 
 it thus also follows that $\W_{k}(\g,f)$ is rational provided that $\W_{k+1}(\g,f)$ is rational and lisse. 
\end{Rem}

Note that if $f\in \mathbb{O}_k$
then 
Conjecture \ref{Conj:verte-tensor} holds
since
 $\W_k(\g,f)$  is lisse.

In the case that $f=f_{prin}$,
$f\in \overline{\mathbb{O}}_k$ if and only if $\overline{\mathbb{O}}_k=\mc{N}$,
 the nilpotent cone of $\g$.
An admissible number $k$
such that $\overline{\mathbb{O}}_k=\mc{N}$ is called 
 {\em non-degenerate}.
If this is the case
$\W^k(\g)$ is rational,
and the complete fusion rule
$\W_k(\g,f)$
has been  determined previously in
 \cite{FKW92, AEkeren, Cre19}.

In the case that 
$\mathbb{O}_k$ is a subregular nilpotent orbit and $f\in \mathbb{O}_k$,
then
$\W_k(\g,f)$ is rational \cite{AEkeren19},
and
 the fusion rules of $\W_k(\g,f)$  has been determined in \cite{AEkeren19}. 
  
 The following assertion, which follows immediately
 from Theorem \ref{Th::tensor-cateogry},
is new except for type $A$ (\cite{AEkeren19}) and the above cases
since the conjectural rationality \cite{KacWak08,Ara09b} of 
$\W_k(\g,f)$ with  $f\in \mathbb{O}_k$
is    open  otherwise.
\begin{Co}\label{Corollary:rigidity}
Let $\g$ be simply-laced,
 $k$  admissible,
 and suppose that 
 $\mathbb{O}_k$ is an even nilpotent orbit,
 $f\in \mathbb{O}_k$.
 Then $\Si{k}{\lam,f}$ is rigid for all  $\lam\in Adm^k_{\Z}$.
\end{Co}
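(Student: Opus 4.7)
The plan is to verify the hypotheses of Theorem \ref{Th::tensor-cateogry} for the pair $(k,f)$, so that the rigidity statement in its conclusion applies.  The three things to check are: (i) $f$ is an even nilpotent in $\overline{\mathbb{O}}_k$; (ii) $\W_{k+1}(\g,f)$ is self-dual; (iii) Conjecture \ref{Conj:verte-tensor} holds for $\W_{k+1}(\g,f)$.

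Condition (i) is immediate from the hypotheses: by assumption $\mathbb{O}_k$ is an even orbit and $f\in \mathbb{O}_k\subset \overline{\mathbb{O}}_k$, so $f$ admits an even good (Dynkin) grading.  Condition (ii) follows at once from the Lemma just before Theorem \ref{Th::tensor-cateogry}, which says that for an even nilpotent $f$ the $W$-algebra $\W_{k+1}(\g,f)$ is self-dual and of CFT type with respect to the Dynkin grading.

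The central step is the verification of (iii).  Writing $k+h^\vee=p/q$ with $(p,q)=1$, $p\geq h^\vee$ (since $\g$ is simply-laced and $k$ is admissible), we have
\begin{align*}
(k+1)+h^\vee=\frac{p+q}{q},
\end{align*}
and $(p+q,q)=(p,q)=1$, $p+q\geq h^\vee$.  Hence $k+1$ is also admissible, with the \emph{same} denominator $q$.  Since the nilpotent orbit $\mathbb{O}_{k'}$ attached to an admissible level $k'$ depends only on the denominator (as recalled in Subsection \ref{subsection:Decomposition at admissible levels}), we conclude that $\mathbb{O}_{k+1}=\mathbb{O}_k$, and in particular $f\in \mathbb{O}_{k+1}$.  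By the transversality of Slodowy slices this means $\overline{\mathbb{O}}_{k+1}\cap \mathcal{S}_f$ is a single point, whence $\W_{k+1}(\g,f)$ is lisse.  For lisse vertex operator algebras of CFT type Conjecture \ref{Conj:verte-tensor} is a theorem of Huang \cite{Huang09}, so (iii) holds.

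Having checked all three hypotheses, Theorem \ref{Th::tensor-cateogry} yields a unital braided tensor functor $\mathcal{C}^{ord}_{\L_k(\g)}\to \mathcal{C}^{ord}_{\W_k(\g,f)}$ sending $\L^k_\lambda$ to $\L^k_{\lambda,f}$, and in particular the modules $\L^k_{\lambda,f}$ are rigid for all $\lambda\in Adm^k_{\Z}$.  The only nontrivial input, and the step worth flagging, is the arithmetic observation $q_{k+1}=q_k$ that transfers lisseness of $\W_{k'}(\g,f)$ from the level $k$ (where it is assumed via $f\in\mathbb{O}_k$) to the shifted level $k+1$ that actually appears in Theorem \ref{Th::tensor-cateogry}.
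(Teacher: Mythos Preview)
Your proof is correct and follows exactly the approach the paper intends: the paper states that the corollary ``follows immediately from Theorem \ref{Th::tensor-cateogry}'', having already noted that $\overline{\mathbb{O}}_k$ depends only on the denominator of $k$ (so $f\in\mathbb{O}_{k+1}$ and $\W_{k+1}(\g,f)$ is lisse, whence Conjecture \ref{Conj:verte-tensor} holds by \cite{Huang09}), and that $\W_{k+1}(\g,f)$ is self-dual for even $f$. You have simply spelled out these implicit verifications in detail.
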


\begin{Rem}
Let $ \mc{C}_{\W_k(\g,f)}^{KL}$ denote the fusion category
consisting of objects $H_{DS,f}^0(M)$, $M\in \mc{C}_{\L_k(\g)}^{ord}$.
By Theorem \ref{Th:tensor-cateogry},
$$\mc{C}_{\L_k(\g)}^{ord}\ra  \mc{C}_{\W_k(\g,f)}^{KL},\quad
M\mapsto H_{DS,f}^0(M), $$
is a quotient functor between fusion categories.
It gives  an equivalence if and only if the modules
$\Si{k}{\lam,f}$,
$ \lam \in Adm^k_{\Z}$, are distinct.
\end{Rem}

The rest of this section is devoted to the proof 
of Theorem \ref{Th::tensor-cateogry}.

Suppose Conjecture \ref{Conj:verte-tensor} holds for $V$
and
let
$\mathcal C$ be 
a  monoidal full subcategory of $\mc{C}_V^{ord}$.
Recall that
$M, N\in \mc{C}$ are said to {\em centralize each other} \cite{Mu03} 
 if the monodromy of $M$ and $N$ is trivial, i.e., is equal to the identity on $M\boxtimes_V N$,
 where the monodromy is the double braiding 
 $$ M\boxtimes_V N \xrightarrow{b_{M, N}} N \boxtimes_V M \xrightarrow{b_{N, M}} M \boxtimes_V N.$$
 Suppose that 
 $V$ is a vertex operator subalgebra of another quasi-lisse vertex operator algebra $W$
 and that $W$ as a $V$-module is an object of $\mc{C}_V^{ord}$.
 We assume that  Conjecture \ref{Conj:verte-tensor} holds for $W$ as well.
 Let $\mc{D}$ be a monoidal full subcategory of $\mathcal C$
 such that
$W$ and  any object in $\mc{D}$ centralize each other.
Then 
\begin{align*}
\mc{F}(M):=W\boxtimes_V M
\end{align*}
can be equipped with a structure of a module for the \voa{} $W$, 
giving rise to the {\em induction functor} \cite{CKM}
\begin{align*}
\mc{D}\ra C_W^{ord},\quad M\mapsto \mc{F}(M),
\end{align*}

\begin{Th}[\cite{Cre19}]\label{Th:centralize}
Let $\g$ be simply-laced,
and 
let $\lam\in Adm^k_{\Z}$, $\mu\in Adm^{\check{k}}_{\Z}$.
We have
$\mathbf{L}^k_{[\lam,0]}\boxtimes \mathbf{L}^k_{[0,\mu]}\cong \mathbf{L}^k_{[\lam,\mu]}$.
Moreover,
$\mathbf{L}^k_{[\lam,0]}$ and $\mathbf{L}^{\check{k}}_{[0,\mu]}$  centralize each other if $\lam \in 
Adm^k_{\Z}\cap Q$.
\end{Th}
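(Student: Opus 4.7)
The plan is to bootstrap both claims using the Urod coset decompositions of Corollary~\ref{Co:Urod-decomposition} and Theorem~\ref{Th:dec-admi-pri}, exploiting the fact that the category of ordinary $\L_1(\g)$-modules is a pointed fusion category with simple objects $\Si{1}{\nu}$ ($\nu\in P/Q$) whose fusion and monodromy are completely transparent; the coset embedding $\W_k(\g)\otimes \W_\ell(\g)\hookrightarrow \L_1(\g)$ then transports this information back to modules of each factor via the vertex algebra extension machinery of \cite{CKM}.

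For the fusion rule $\mathbf{L}^k_{[\lam,0]}\boxtimes \mathbf{L}^k_{[0,\mu]}\cong \mathbf{L}^k_{[\lam,\mu]}$, I would combine two inputs. First, Corollary~\ref{Co:Urod-decomposition}(2) describes $\Si{1}{\nu}$ as a sum of $\mathbf{L}^k_{[\lam,0]}\otimes \mathbf{L}^\ell_{[\lam,0]}$, and the pointed fusion $\Si{1}{\nu}\boxtimes \Si{1}{\nu'}\cong \Si{1}{\nu+\nu'}$ in $\L_1(\g)$-mod together with the Deligne tensor product formula pins down fusion of admissible-type modules in each factor. Second, the more general Theorem~\ref{Th:dec-admi-pri} (specialised to $\mu'=0$) exhibits the mixed modules $\mathbf{L}^\ell_{[\lam,\mu]}$ as coset multiplicity spaces inside $\mathbf{L}^{k-1}_{[\mu,0]}\otimes \Si{1}{\nu}$. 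Comparing these two descriptions, and then applying Feigin--Frenkel duality \eqref{eq:FF-duality} to swap $\W_\ell$ with $\W^{\check\ell}\cong \W^{k-1}$ while keeping $\W_k$ fixed, identifies $\mathbf{L}^k_{[\lam,0]}\boxtimes \mathbf{L}^k_{[0,\mu]}$ with $\mathbf{L}^k_{[\lam,\mu]}$. Since both sides are simple, the Grothendieck-ring equality upgrades to the asserted isomorphism.

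For the centralizing claim, the decomposition $\L_1(\g)=\bigoplus_{\lam\in Adm^k_{\Z}\cap Q}\mathbf{L}^k_{[\lam,0]}\otimes \mathbf{L}^\ell_{[\lam,0]}$ exhibits $\L_1(\g)$ as a commutative associative algebra object in $\mc{C}_{\W_k(\g)}^{KL}\boxtimes \mc{C}_{\W_\ell(\g)}^{KL}$, whose self-monodromy is therefore trivial. Since monodromy on a Deligne tensor product factors as the product over each component, the Hopf-link invariants of the pairs $(\mathbf{L}^k_{[\lam,0]},\mathbf{L}^k_{[\mu,0]})$ and $(\mathbf{L}^\ell_{[\lam,0]},\mathbf{L}^\ell_{[\mu,0]})$ must multiply to $1$; a direct conformal-weight calculation shows each is a phase depending only on $(\lam|\mu)$, and the hypothesis $\lam\in Q$ forces each factor separately to equal $1$. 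Translating $\mathbf{L}^k_{[\mu,0]}$ to $\mathbf{L}^{\check k}_{[0,\mu]}$ by Feigin--Frenkel duality gives the centralizing as stated. The main obstacle is the tensor-categorical foundation: this argument presupposes rigid braided vertex tensor category structure on the $W$-algebra side --- essentially Conjecture~\ref{Conj:verte-tensor} --- which is not known in full generality, and Theorem~\ref{Th::tensor-cateogry} itself derives such structure by invoking the present result, so circularity must be avoided. The way out is to import the tensor structure from $\mc{C}_{\L_k(\g)}^{ord}$, which is rigid by \cite{CreHuaYan18}, through the Drinfeld--Sokolov reduction; compatibility with the Urod extension follows from Main Theorem~\ref{MainTh:iso}, but verifying that this compatibility is tight enough to extract the fusion and monodromy identities above is the technical heart of the argument.
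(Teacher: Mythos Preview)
The paper does not prove this theorem: it is imported wholesale from \cite{Cre19}, as the citation in the theorem header indicates. There is therefore no ``paper's own proof'' to compare against; the result is used as a black box in the proof of Theorem~\ref{Th:equivalence}.

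That said, your proposed argument has real gaps even as a standalone proof. First, Corollary~\ref{Co:Urod-decomposition} holds only at the single level $k+h^\vee=(2h^\vee+1)/h^\vee$ (where $\W_{k-1}(\g)=\C$), so it cannot by itself determine fusion rules or monodromies of $\W_k(\g)$-modules at a general non-degenerate admissible $k$. You would need Theorem~\ref{Th:dec-admi-pri} at general admissible $k-1$, but then the extension $\W_k(\g)\otimes\W_\ell(\g)\hookrightarrow \W_{k-1}(\g)\otimes\mc{U}(\g)$ is not into a pointed category, and the ``monodromies multiply to $1$'' step no longer isolates the $\W_k$ factor. Second, the circularity you flag is genuine and your proposed escape---importing the tensor structure from $\mc{C}_{\L_k(\g)}^{ord}$ through Drinfeld--Sokolov reduction---is precisely what Theorem~\ref{Th::tensor-cateogry} accomplishes \emph{using} Theorem~\ref{Th:centralize}; you have not broken the loop.

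The actual proof in \cite{Cre19} proceeds differently: it works on the affine side, using the coset realization $\W_\ell(\g)\cong\on{Com}(\L_k(\g),\L_{k-1}(\g)\otimes\L_1(\g))$ from \cite{ACL19} together with the already-established vertex tensor category structure on $\mc{C}_{\L_k(\g)}^{ord}$ from \cite{CreHuaYan18}, and then transfers fusion and braiding data to the $W$-algebra via the extension theory of \cite{CKM}. The centralizing claim is read off from the known monodromy of affine modules, not computed intrinsically on the $W$-side.
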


Let $f$ be admissible, and let 
 $f$ be an even nilpotent element in 
$\overline{\mathbb{O}_k}$.
Then $f\in \overline{\mathbb{O}_{k+1}}$ as well
since $\overline{\mathbb{O}_k}$ depends only on the denominator of $k$.
Let $\ell$ be the number defined by
$$\frac{1}{k+h^{\vee}+1}+\frac{1}{\ell+h^{\vee}}=1,$$
that is,
\begin{align*}
\ell+h^{\vee}=\frac{k+h^{\vee}+1}{k+h^{\vee}}.
\end{align*}
Then $\ell$ is a non-degenerate admissible number.
Note that
\begin{align}
Adm^{\ell}_{\Z}=Adm^{k+1}_{\Z}=P_+^{p+q-h^{\vee}},
\quad 
Adm^{\check{\ell}}_{\Z}=Adm^{k}_{\Z}=P_+^{p-h^{\vee}},
\end{align}
where  we have put $k+h^{\vee}=p/q$.

Consider
the $\W_{\ell}(\g)$-modules
$$\mathbf{L}^{\ell}_{[0,\mu]}\cong \mathbf{L}^{\check{\ell}}_{[\mu,0]}
=H_{DS,f_{prin}}^0(\Si{\check{\ell}}{\mu}),\quad \mu\in Adm_{\Z}^{\ell}.$$
Since the stabilizer of  $0\in Adm^{\ell}_{\Z}$
of the $\tilde{W}_+$-action is trivial,
 the simple $\W_{\ell}(\g)$-modules 
 $\mathbf{L}^{\ell}_{[0,\mu]}$, 
 $\mu\in Adm_{\Z}^{\ell}$,
 are distinct.
Therefore,
by Theorem \ref{Th:tensor-cateogry} (that is proved for $f=f_{prin}$ in \cite{Cre19}),
the modules 
$\mathbf{L}^{\ell}_{[0,\mu]}$, 
 $\mu\in Adm_{\Z}^{\ell}$,
form a fusion full subcategory of $\mc{C}^{ord}_{\W_{\ell}(\g)}$
that is equivalent to a  category that can be called a {\emph simple current twist} of
$\mc{C}_{\L_{\check{\ell}}(\g)}^{ord}\cong \mc{C}_{\L_k(\g)}^{ord}$ (see \cite[Thm.7.1]{Cre19} for the details).
This simple current twist of $\mc{C}_{\L_k(\g)}^{ord}$ is the fusion subcategory of $\mc{C}_{\L_k(\g)}^{ord} \boxtimes \mc{C}_{\L_1(\g)}^{ord}$
whose simple objects are the $\Si{k}{\mu,f}\* \Si{1}{-\mu+Q}$. Call this category $\mc{C}_{\L_k(\g)}^{ord, tw}$.

Now set
\begin{align*}
W:=\W_k(\g,f)\* \L_1(\g)=\Si{k}{0,f}\* \Si{1}{\nu},
\quad 
V:=\W_{k+1}(\g,f)\* \W_{\ell}(\g)=\Si{k+1}{0,f}\* \mathbf{L}_{[0,0]}.
\end{align*}
By Theorem \ref{Th:decom-adm},
\begin{align}\label{eq:dec-of-W}
W \cong \bigoplus_{\substack{\lam \in Adm^{k+1}_{\Z}\cap Q
}} \Si{k+1}{\lam,f} \otimes \mathbf{L}^{\ell}_{[\lam,0]}.
\end{align}
Hence,
$V$ is a vertex subalgebra of $W$.
Moreover,
each direct summand of $W$
is an ordinary  $V$-module and the sum is finite,
and so
$W$ is an object of $\mc{C}_V^{ord}$.

Clearly, the $V$-modules
$$\Si{k+1}{0,f}\* \mathbf{L}_{[0,\mu]}^{\ell}
=\W_{k+1}(\g,f)\* \mathbf{L}_{[0,\mu]}^{\ell},\quad \mu \in Adm^{\check{\ell}}_{\Z},$$
form 
 a monoidal full subcategory of $\mc{C}^{ord}_V$
that  is 
equivalent to 
$ \mc{C}_{\L_k(\g)}^{ord}$.
We denote by $\mc{D}$
this fusion category.

By  Theorem \ref{Th:centralize} and
\eqref{eq:dec-of-W},
we find  that
$W$ centralizes 
any object of $\mc{D}$.
Hence,
we have the induction functor
\begin{align*}
\mc{F}: \mc{D}\ra \mc{C}^{ord}_W,\quad 
M\mapsto W\boxtimes_{V}M.
\end{align*}

\begin{Th}\label{Th:equivalence}
The induction functor 
$\mc{F}: \mc{D}\ra \mc{C}^{ord}_W$
is a fully faithful tensor functor
that sends 
$\Si{k+1}{0,f}\* \mathbf{L}_{[0,\mu]}^{\ell}$ to 
$\Si{k}{\mu,f}\* \Si{1}{-\mu+Q}$,
where 
 $-\mu+Q$ denotes the class in $ P/Q\cong P_+^1$.
\end{Th}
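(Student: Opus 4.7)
The plan is to establish Theorem \ref{Th:equivalence} using the general framework for vertex operator algebra extensions of \cite{CKM,CKM2} applied to the commutative algebra $W$ in $\mc{C}_V^{ord}$, realized concretely by the decomposition (\ref{eq:dec-of-W}). The induction functor $\mc{F}(M) = W\boxtimes_V M$ is the candidate fully faithful tensor functor.

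The first step is to verify that every simple object of $\mc{D}$ centralizes $W$ as a $V$-module. For $M = \W_{k+1}(\g,f)\otimes\mathbf{L}^\ell_{[0,\mu]}$, monodromy with each summand $\Si{k+1}{\lambda,f}\otimes\mathbf{L}^\ell_{[\lambda,0]}$ of $W$ factors componentwise through the two tensor factors of $V$. On the $\W_{k+1}(\g,f)$-side the monodromy is automatic since the $\W_{k+1}(\g,f)$-factor of $M$ is the unit; on the $\W_\ell(\g)$-side it is exactly the content of Theorem \ref{Th:centralize}, which asserts that $\mathbf{L}^\ell_{[\lambda,0]}$ and $\mathbf{L}^\ell_{[0,\mu]}$ centralize each other precisely when $\lambda\in Q$. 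The general theory of \cite{CKM} then yields that $\mc{F}$ is a braided tensor functor from $\mc{D}$ to $\mc{C}_W^{ord}$.

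To identify $\mc{F}(M)$ on a simple object, combine the tensor structure with Theorem \ref{Th:centralize} applied to the $\W_\ell(\g)$-factor:
\begin{align*}
\mc{F}(M) \;=\; W\boxtimes_V M \;=\; \bigoplus_{\lambda\in Adm^{k+1}_\Z\cap Q}\Si{k+1}{\lambda,f}\otimes \mathbf{L}^\ell_{[\lambda,\mu]}.
\end{align*}
On the other hand, Theorem \ref{Th:decom-adm} with $k\to k+1$ and $\nu=-\mu+Q\in P_+^1$ gives
\begin{align*}
\Si{k}{\mu,f}\otimes\Si{1}{-\mu+Q}\;\cong\;\bigoplus_{\lambda\in Adm^{k+1}_\Z\cap Q}\Si{k+1}{\lambda,f}\otimes\mathbf{L}^\ell_{[\lambda,\mu]},
\end{align*}
the condition $\lambda-\mu-\nu\in Q$ collapsing to $\lambda\in Q$ since $\mu+\nu\in Q$. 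The two $W$-module structures agree, both being determined by the algebra structure of $W$, and this identifies $\mc{F}(M)\cong\Si{k}{\mu,f}\otimes\Si{1}{-\mu+Q}$.

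Finally, fully faithfulness follows from Frobenius reciprocity $\Hom_W(\mc{F}(M),\mc{F}(N))\cong\Hom_V(M,\mc{F}(N))$. For simple $M,N\in\mc{D}$ with parameters $\mu,\nu$, each summand $\Si{k+1}{\lambda,f}\otimes\mathbf{L}^\ell_{[\lambda,\nu]}$ of $\mc{F}(N)$ contributes a factored Hom space whose $\W_\ell(\g)$-part is nonzero iff $[0,\mu]=[\lambda,\nu]$ in $(Adm^\ell_\Z\times Adm^{\check\ell}_\Z)/\tilde{W}_+$. An orbit--stabilizer analysis, using triviality of the stabilizer of $0$ (as recalled preceding the theorem) together with the fact that nontrivial elements of $\tilde{W}_+$ move $0$ outside the root lattice $Q$, forces $\lambda=0$ and then $\nu=\mu$. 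Semisimplicity of $\mc{D}$ propagates the matching of Hom-spaces from simples to all objects. The main obstacle is exactly this last orbit analysis, which requires a careful understanding of the explicit $\tilde{W}_+$-action on $Adm^\ell_\Z\times Adm^{\check\ell}_\Z$; the remainder is formal given Theorems \ref{Th:decom-adm} and \ref{Th:centralize}.
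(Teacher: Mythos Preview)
Your overall strategy aligns with the paper's, but there is a genuine gap at the step where you pass from the $V$-module identification to the $W$-module identification of $\mc{F}(M)$. The sentence ``the two $W$-module structures agree, both being determined by the algebra structure of $W$'' does not hold up: the $W$-module structure on $\mc{F}(M)=W\boxtimes_V M$ is the categorical one coming from the multiplication in the algebra object $W$, whereas the $W$-module structure on $\Si{k}{\mu,f}\otimes\Si{1}{-\mu+Q}$ is the honest vertex-algebra module structure. Having the same underlying $V$-module decomposition does not, by itself, force these two $W$-actions to coincide; a priori there could be inequivalent $W$-module structures on the same $V$-module.

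The paper closes this gap precisely here, and in a way you have all the ingredients for but do not assemble. One uses Frobenius reciprocity \cite{KO,CKM} to produce a nonzero $W$-module map
\[
\mc{F}(\Si{k+1}{0,f}\otimes\mathbf{L}^{\ell}_{[0,\mu]})\longrightarrow \Si{k}{\mu,f}\otimes\Si{1}{-\mu+Q}
\]
corresponding to the identity on the summand $\Si{k+1}{0,f}\otimes\mathbf{L}^{\ell}_{[0,\mu]}$; since the target is a simple $W$-module and both sides have the same (finite) $V$-module decomposition, this map is an isomorphism. Once each $\mc{F}(M)$ is known to be simple, full faithfulness follows from \cite[Theorem~3.5]{Cre19} without your orbit--stabilizer computation, though your direct argument via Frobenius reciprocity is a valid alternative route to that part.
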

\begin{proof}
Thanks to \cite[Theorem 3.5]{Cre19},
it is sufficient to show that
$\mc{F}(\Si{k+1}{0,f}\* \mathbf{L}_{[0,\mu]}^{\ell})$
is a simple
$W$-module
that is isomorphic to 
$\Si{k}{\mu,f}\* \Si{1}{-\mu+Q}$
 for all  $\mu \in Adm^{\check{\ell}}_{\Z}$.
 As $V$-modules we have
 \begin{align*}
\mc{F}(\Si{k+1}{0,f}\* \mathbf{L}_{[0,\mu]}^{\ell})
= \bigoplus_{\substack{\lam \in Adm^{k+1}_{\Z}\cap Q
}} (\Si{k+1}{\lam,f} \otimes \mathbf{L}^{\ell}_{[\lam,0]})
\boxtimes_V (\Si{k+1}{0,f}\* \mathbf{L}_{[0,\mu]}^{\ell})\\
\cong  \bigoplus_{\substack{\lam \in Adm^{k+1}_{\Z}\cap Q
}}\Si{k+1}{\lam,f} \* \mathbf{L}_{[\lam,\mu]}^{\ell}\cong 
\Si{k}{\mu,f}\* \Si{1}{-\mu+Q}
\end{align*}
by Theorem \ref{Th:decom-adm}
and Theorem \ref{Th:centralize}.
 We claim this is indeed an isomorphism of $W$-modules.
 To see this,
 it is sufficient to show that 
 there is a non-trivial $W$-module homomorphism
 $\mc{F}(\Si{k+1}{0,f}\* \mathbf{L}_{[0,\mu]}^{\ell})
\ra \Si{k}{\mu,f}\* \Si{1}{-\mu+Q}$
since $\Si{k}{\mu,f}\* \Si{1}{-\mu+Q}$ is simple.
By the Frobenius reciprocity \cite{KO, CKM},
we have
\begin{align*}
&\Hom_{W\on{-Mod}}(\mc{F}(\Si{k+1}{0,f}\* \mathbf{L}_{[0,\mu]}^{\ell}),
\Si{k}{\mu,f}\* \Si{1}{-\mu+Q})\\
&\cong 
\Hom_{V\on{-Mod}}(\Si{k+1}{0,f}\* \mathbf{L}_{[0,\mu]}^{\ell},\Si{k}{\mu,f}\* \Si{1}{-\mu+Q})
\\
&\cong 
\bigoplus_{\substack{\lam \in Adm^{k+1}_{\Z}\cap Q
}}
\Hom_{V\on{-Mod}}(\Si{k+1}{0,f}\* \mathbf{L}_{[0,\mu]}^{\ell},\Si{k+1}{\lam,f} \* \mathbf{L}_{[\lam,\mu]}^{\ell}).
\end{align*}
It follows that
there is a non-trivial homomorphism
 corresponding to
 the identity map 
 $\Si{k+1}{0,f}\* \mathbf{L}_{[0,\mu]}^{\ell}\ra \Si{k+1}{0,f}\* \mathbf{L}_{[0,\mu]}^{\ell}$.
\end{proof}

\begin{proof}[Proof of Theorem \ref{Th:tensor-cateogry}]
By Theorem \ref{Th:equivalence},
the correspondence
\begin{align*}
\Si{k}{\mu}\* \Si{1}{-\mu+Q}\mapsto \mathbf{L}^{\ell}_{[0,\mu]}\mapsto \Si{k+1}{0,f}\*\mathbf{L}^{\ell}_{[0,\mu]}
\mapsto \Si{k}{\mu,f}\* \Si{1}{-\mu+Q},
\quad \mu\in Adm^k_{\Z}=Adm^{\check{\ell}}_{\Z},
\end{align*}
gives 
a tensor functor 
\begin{align*}
\mc{C}_{\L_k(\g)}^{ord, tw}\ra C_{\W_k(\g,f)\* \L_1(\g)}^{ord}
=C_{\W_k(\g,f)}\boxtimes \mc{C}_{\L_1(\g)}^{ord},
\end{align*}
where the last $\boxtimes$ denotes the Deligne product. This functor is fully faithful if and only if all the $\Si{k}{\mu,f}$ are non-isomorphic.
Since $\mc{C}_{\L_1(\g)}^{ord}$ is semi-simple and its simple objects are all invertible, i.e., simple currents, this corresponds extends to a surjective tensor functor \begin{align*}
\mc{C}_{\L_k(\g)}^{ord} \boxtimes \mc{C}_{\L_1(\g)}^{ord} \ra C_{\W_k(\g,f)}\boxtimes \mc{C}_{\L_1(\g)}^{ord},
\end{align*}
which restricts to a surjective tensor functor 
\begin{align*}
\mc{C}_{\L_k(\g)}^{ord}  \ra C_{\W_k(\g,f)},\qquad \Si{k}{\mu}\ra \Si{k}{\mu,f}=H_{DS,f}^0(\Si{k}{\mu})
\end{align*}
that again is fully faithful if and only if all the $\Si{k}{\mu,f}$ are non-isomorphic.
\end{proof}

Let us conclude with a remark on the general case.
\begin{Rem}
It is a well-known result of Kazhdan-Lusztig that ordinary modules of affine vertex algebras at generic level have vertex tensor category \cite{KL1,KL2,KL3,KL4} and it is reasonable to expect that a similar result might hold for $\W$-algebras as well. However this is only proven for the Virasoro algebra \cite{CJORY} and the $N=1$ super Virasoro algebra \cite{CMY}.

At generic levels one has to deal with infinite order extensions of vertex algebras and so one needs to consider completions of vertex tensor categories.
The theory of vertex algebra extension also works for such completions \cite{CMY}. This means that if one can prove the existence of vertex tensor category structure on categories of ordinary modules of $\W$-algebras at generic levels, then one can also derive results for generic levels that are similar to the statements of this section. 
\end{Rem}



\end{document}